\newtheorem{Theorem}{Theorem}[section]
\newtheorem{Lem}[Theorem]{Lemma}
\newtheorem{Thm}[Theorem]{Theorem}
\theoremstyle{definition}
\newtheorem{Bem}[Theorem]{Remark}
\newcommand{\tr}{\operatorname{Tr}}
\newcommand{\dv}{\text{ }dV}
\renewcommand{\epsilon}{\varepsilon}
\newcommand{\R}{\ensuremath{\mathbb{R}}}
\newcommand{\s}{\mathbb{S}}
\numberwithin{equation}{section}
\title[Unique continuation properties for polyharmonic maps]{Unique continuation properties for polyharmonic maps between Riemannian manifolds}
\author{Volker Branding}
\address{University of Vienna, Faculty of Mathematics\\
Oskar-Morgenstern-Platz 1, 1090 Vienna, Austria}
\email{volker.branding@univie.ac.at}
\author{Stefano Montaldo}
\address{Universit\`a degli Studi di Cagliari\\
Dipartimento di Matematica e Informatica\\
Via Ospedale 72\\
09124 Cagliari, Italy}
\email{montaldo@unica.it}
\author{Cezar Oniciuc}
\address{Faculty of Mathematics\\
Al.I. Cuza University of Iasi, Bd. Carol I, 11, 700506 Iasi, Romania}
\email{oniciucc@uaic.ro}
\author{Andrea Ratto}
\address{Universit\`a degli Studi di Cagliari\\
Dipartimento di Matematica e Informatica\\
Via Ospedale 72\\
09124 Cagliari, Italy}
\email{rattoa@unica.it}
\date{\today}
\subjclass[2010]{58E20; 31B30; 53C20}
\keywords{Polyharmonic maps; unique continuation principle}
\begin{document}

\begin{abstract}\textit{Polyharmonic maps} of order $k$ (briefly, $k$-\textit{harmonic maps}) are a natural generalization of harmonic and biharmonic maps. These maps are defined as the critical points of suitable higher order functionals which extend the classical \textit{energy functional}  for maps between Riemannian manifolds. The main aim of this paper is to investigate the so-called \textit{unique continuation principle}. 
{More precisely, assuming that the domain is connected, we shall prove the following extensions of results known in the harmonic and biharmonic case: (i) if a $k$-harmonic map is harmonic on an open subset, then it is harmonic everywhere; (ii) if two $k$-harmonic maps agree on a open subset, then they agree everywhere; (iii) if, for a $k$-harmonic map to the $n$-dimensional sphere, an open subset of the domain is mapped into the equator, then all the domain is mapped into the equator. 
}
\end{abstract}
\maketitle

\section{Introduction and results}
\textit{Harmonic maps} are among the most studied geometric variational problems in differential geometry.
The geometric setup is the following. We consider a map \(\varphi\colon M\to N\) between two Riemannian manifolds \((M^m,g)\) and \((N^n,h)\).
Then the \textit{energy} of \(\varphi\) is defined by
\begin{align}
\label{1-energy}
E(\varphi)(=E_1(\varphi))= \frac{1}{2}\int_M|d\varphi|^2\dv\, .
\end{align}
Its critical points are governed by the vanishing of the so-called
\emph{tension field} $\tau(\varphi)$, that is
\begin{align}
\label{harmonic-map-equation}
0=\tau(\varphi):=\tr_g \nabla d\varphi= \sum_{j=1}^m  \nabla d\varphi (e_j,e_j)\, ,
\end{align}
where \(\{e_j\},j=1,\ldots,m=\dim M\), is a local orthonormal frame field tangent to \(M\) and the {\em second fundamental form} $\nabla d \varphi$ is defined by
\[
\nabla d \varphi(X,Y)=\bar \nabla_{X} d\varphi(Y)-d \varphi\left (\nabla_X Y \right )\, ,
\]
\(\bar \nabla\) being  the connection on the vector bundle \(\varphi^{-1}TN\). Harmonic maps are precisely the solutions of equation \eqref{harmonic-map-equation}.
We observe that the harmonicity equation is a second order semilinear elliptic system.
Since this system of equations is of second order, powerful tools such as the maximum principle
help to obtain a deep understanding of both analytic and geometric properties of harmonic maps (we refer to the classical surveys of Eells and Lemaire \cite{MR703510, MR1363513} for an introduction and background on this topic).

Another geometric variational problem which received growing attention in the recent years is that of the so-called \emph{biharmonic maps}.
These maps are characterized as the critical points of the \emph{bienergy} for maps between two Riemannian manifolds,
which is given by
\begin{align}
\label{2-energy}
E_2(\varphi)=\frac{1}{2} \int_M|\tau(\varphi)|^2\dv \, .
\end{align}
Here, the Euler-Lagrange equation is a fourth order semilinear elliptic system and is expressed by means of the vanishing
of the \emph{bitension field} $\tau_2(\varphi)$, that is
\begin{align}
\label{biharmonic-map-equation}
0=\tau_2(\varphi):=\bar\Delta\tau(\varphi)+\sum_{j=1}^mR^N(d\varphi(e_j),\tau(\varphi))d\varphi(e_j)\, ,
\end{align}
where \(\bar\Delta\) is the so-called \textit{rough Laplacian}, i.e., the connection Laplacian on \(\varphi^{-1}TN\). For background and research on biharmonic maps we refer to \cite{SMCO} and the recent book \cite{chenoubook}.

In contrast to the harmonic map equation, \eqref{biharmonic-map-equation} is of fourth order, a fact which entails significant additional technical difficulties. For instance, classical tools such as the maximum principle are no longer applicable (for instance, see \cite{MR3045700, MR3871569}).

There exist different systematic approaches which generalize the notions of harmonic and biharmonic
maps to energy functionals that contain derivatives of higher order.

In this paper we shall focus on the following $k$-order versions of the energy functional:
if $k=2s$, $s \geq 1$,
\begin{align}
\label{2s-energia}
E_{2s}(\varphi)
=&\frac{1}{2}\int_M \langle\bar{\Delta}^{s-1}\tau(\varphi),\bar{\Delta}^{s-1}\tau(\varphi)\rangle\dv\, .
\end{align}
In the case that $k=2s+1$:
\begin{align}
\label{2s+1-energia}
E_{2s+1}(\varphi)
=&\frac{1}{2}\int_M\langle\bar\nabla\bar{\Delta}^{s-1}\tau(\varphi),\bar\nabla \bar{\Delta}^{s-1}\tau(\varphi)\rangle \dv\, .
\end{align}
A \textit{polyharmonic map of order} $k$ (briefly, a $k$-\textit{harmonic map}) is a critical point of the $k$-energy functional $E_k(\varphi)$. 

{The functionals \eqref{2s-energia}, \eqref{2s+1-energia} are probably the simplest higher order version of the classical energy functional in a Riemannian geometric setting. These functionals were first studied systematically in an interesting series of papers by Maeta. Particularly, he established their main variational equations and properties, and proved some basic characterizations of proper (i.e. non-harmonic) triharmonic submanifolds into a sphere (see \cite{MR2869168, MR2911957, MR3007953}). 
More recently, these $k$-energy functionals} have been intensively studied. For instance, the stress-energy tensor for polyharmonic maps was recently calculated in \cite{MR4007262}.
Vanishing results for polyharmonic maps into Euclidean spaces have been obtained in \cite{MR3314128,MR3809656}
and for arbitrary targets in \cite{br2019structure}.
Moreover, proper $k$-harmonic immersions into spheres, ellipsoids and rotation hypersurfaces were constructed in \cite{MOR-arxiv,MR3711937, MR3790367}.
Some other results on triharmonic maps were achieved in \cite{MR3403738,MR3371364}.

The Euler-Lagrange equations of \eqref{2s-energia}, \eqref{2s+1-energia} were calculated in \cite{MR2869168, MR3007953} and can be described as follows (note that we set \(\bar\Delta^{-1}=0\)):
\begin{enumerate}
 \item The critical points of \eqref{2s-energia} are those which satisfy
\begin{equation}
\label{tension-2s}
\begin{split}
0=\tau_{2s}(\varphi):=&\bar\Delta^{2s-1}\tau(\varphi)-R^N(\bar\Delta^{2s-2}\tau(\varphi),d\varphi(e_j))d\varphi(e_j) \\
&-\sum_{\ell=1}^{s-1}\bigg(R^N(\bar\nabla_{e_j}\bar\Delta^{s+\ell-2}\tau(\varphi),\bar\Delta^{s-\ell-1}\tau(\varphi))d\varphi(e_j) \\
&\hspace{1cm}-R^N(\bar\Delta^{s+\ell-2}\tau(\varphi),\bar\nabla_{e_j}\bar\Delta^{s-\ell-1}\tau(\varphi))d\varphi(e_j)
\bigg)\, .
\end{split}
\end{equation}
\item The critical points of \eqref{2s+1-energia} are determined by
\begin{equation}
\label{tension-2s+1}
\begin{split}
0=\tau_{2s+1}(\varphi):=&\bar\Delta^{2s}\tau(\varphi)-R^N(\bar\Delta^{2s-1}\tau(\varphi),d\varphi(e_j))d\varphi(e_j)\\
&-\sum_{\ell=1}^{s-1}\bigg(R^N(\bar\nabla_{e_j}\bar\Delta^{s+\ell-1}\tau(\varphi),\bar\Delta^{s-\ell-1}\tau(\varphi))d\varphi(e_j) \\
&-R^N(\bar\Delta^{s+\ell-1}\tau(\varphi),\bar\nabla_{e_j}\bar\Delta^{s-\ell-1}\tau(\varphi))d\varphi(e_j)
\bigg) \\
&-R^N(\bar\nabla_{e_j}\bar\Delta^{s-1}\tau(\varphi),\bar\Delta^{s-1}\tau(\varphi))d\varphi(e_j)\, .
\end{split}
\end{equation}
\end{enumerate}
In this article we shall focus on one specific analytic aspect of solutions of the polyharmonic map equation, namely, the so-called \emph{unique continuation principle}. We refer to the work of Kazdan \cite{MR948075} for an introduction to this topic in a geometric setting. In particular, Kazdan exhibited an artificial counterexample, but also stated that it is reasonable to expect that the unique continuation property should hold in all geometrically meaningful situations. Indeed, for harmonic maps  unique continuation properties  were proven by Sampson in \cite{MR510549}
and recently generalized to biharmonic maps in \cite{MR3990379}.

{In this paper, all manifolds are assumed to be \textit{connected} and we shall work with \textit{smooth} objects only.}
Our first result is the following:
\begin{Thm}\label{Th-harmonic-everywhere}
Let $\varphi \colon M \to N$ be a $k$-harmonic map, $k \geq 3$. If $\varphi$ is harmonic on an open set $U$, then $\varphi$ is harmonic everywhere.
\end{Thm}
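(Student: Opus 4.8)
The plan is to reduce the statement to a classical unique continuation theorem for second order elliptic inequalities, applied to the tension field $\tau(\varphi)$. First I would observe that the $k$-harmonic map equations \eqref{tension-2s} and \eqref{tension-2s+1} can both be schematically written in the form
\begin{equation*}
\bar\Delta^{k-1}\tau(\varphi) = F\big(\tau(\varphi), \bar\nabla\tau(\varphi), \dots, \bar\nabla^{k-2}\tau(\varphi)\big),
\end{equation*}
where $F$ is a smooth bundle map that is at least \emph{linear} in $\tau(\varphi)$ and its covariant derivatives up to order $k-2$, with coefficients built from $d\varphi$ and the curvature $R^N$ along $\varphi$ (one checks this directly from the displayed formulas: every term on the right of \eqref{tension-2s}, \eqref{tension-2s+1} contains at least one factor of the form $\bar\Delta^{p}\tau(\varphi)$ or $\bar\nabla_{e_j}\bar\Delta^{p}\tau(\varphi)$ with $p \le s-2 \le k-2$, except the leading term which has been moved to the left). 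Setting $T := \tau(\varphi)$, this gives a linear elliptic equation of order $2(k-1)$ for $T$ with a right-hand side that vanishes wherever $T$ and all its derivatives up to order $k-2$ vanish.

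Next I would set up the unique continuation argument. On the open set $U$ we have $T \equiv 0$, hence all covariant derivatives of $T$ vanish on $U$ as well. The goal is to propagate this vanishing: I would introduce the system of sections $Z := (T, \bar\Delta T, \dots, \bar\Delta^{k-2}T)$, which by the equation satisfies a \emph{second order} elliptic differential inequality of the form
\begin{equation*}
|\bar\Delta Z| \le C\big(|Z| + |\bar\nabla Z|\big)
\end{equation*}
pointwise on $M$ — the top slot $\bar\Delta(\bar\Delta^{k-2}T) = \bar\Delta^{k-1}T$ is controlled by $F$, which is bounded by $|Z| + |\bar\nabla Z|$ since $F$ is linear in $T, \dots, \bar\nabla^{k-2}T$ and these are encoded (up to lower order, reabsorbable terms via interpolation/Gårding-type estimates on the bundle) in $Z$ and $\bar\nabla Z$. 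One then invokes the classical Aronszajn–Cordes unique continuation theorem (as used by Sampson \cite{MR510549} and in the biharmonic case \cite{MR3990379}): a section satisfying such an inequality and vanishing on an open set of a connected manifold vanishes identically. Hence $T = \tau(\varphi) \equiv 0$ on $M$, i.e. $\varphi$ is harmonic everywhere.

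The main obstacle I anticipate is the bookkeeping needed to honestly put $Z$ into the required first-order-gradient form: passing from the scalar-type Aronszajn inequality to a system, one must make sure that all the intermediate derivatives $\bar\nabla\bar\Delta^j T$ for $j < k-2$ that appear inside $F$ (note \eqref{tension-2s+1} genuinely contains terms like $\bar\nabla_{e_j}\bar\Delta^{s-1}\tau(\varphi)$) are dominated by $|Z| + |\bar\nabla Z|$ rather than by higher derivatives of $Z$. This is where one needs elliptic regularity estimates relating $|\bar\nabla\bar\Delta^j T|$ to $|\bar\Delta^{j}T|$, $|\bar\Delta^{j+1}T|$ plus lower order terms, applied slot by slot; alternatively one enlarges $Z$ to also include the relevant first covariant derivatives $\bar\nabla\bar\Delta^j T$ and checks the inequality for the enlarged system directly. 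A second, more minor technical point is that the Aronszajn-type theorem is usually stated for functions or for sections of a bundle with a connection; one should cite the vector-bundle version (e.g. the formulation used in \cite{MR3990379}) and note that it applies verbatim here since the metric and connection on $\varphi^{-1}TN$ are smooth.
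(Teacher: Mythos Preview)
Your plan is essentially the paper's proof: both reduce to Aronszajn by packaging $\tau(\varphi),\bar\Delta\tau(\varphi),\ldots,\bar\Delta^{k-2}\tau(\varphi)$ into a single vector-valued unknown and checking the second-order inequality. The paper works in local coordinates with the scalar Laplace--Beltrami operator on the components $u_j^\alpha$ of $\bar\Delta^{j}\tau(\varphi)$ and in fact carries out exactly your ``alternative'': it enlarges the system to include the first derivatives $v_j=\nabla u_j$ for $j\le k-3$ as separate unknowns, then verifies the Aronszajn estimate term by term from the explicit local form of the curvature terms in \eqref{tension-2s}, \eqref{tension-2s+1}.

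One clarification: your worry about needing interpolation or G\aa rding-type estimates is unfounded. Every derivative term that actually occurs on the right of \eqref{tension-2s}, \eqref{tension-2s+1} is of the form $\bar\nabla_{e_j}\bar\Delta^{p}\tau(\varphi)$ with $p\le k-3$, and since $\bar\Delta^{p}\tau(\varphi)$ is already a component of your $Z$, this is \emph{literally} a component of $\bar\nabla Z$; in local coordinates $(\bar\nabla_{i}\bar\Delta^{p}\tau)^\alpha=\partial u_p^\alpha/\partial x^i+u_p^\beta\varphi_i^\gamma\Gamma^\alpha_{\gamma\beta}$, which is bounded by $|Z|+|\partial Z|$. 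Likewise, passing from $\bar\Delta$ to the scalar $\Delta$ on components only produces zero-th and first-order terms in $u_j$ (this is the paper's formula \eqref{eq:laplacian-on-sections}). So your smaller system $Z=(u_0,\ldots,u_{k-2})$ already satisfies the Aronszajn hypothesis directly, with no analytic black boxes; the paper's enlargement by the $v_j$'s is a matter of bookkeeping taste rather than necessity.
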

Next, we turn our attention to another, technically more demanding question concerning the unique continuation.

\textit{Let $\varphi,\,\tilde \varphi \colon M \to N$ be two $k$-harmonic maps, $k \geq 3$. If the two maps agree on an open subset of $M$, do they coincide everywhere?}

Our main result is to prove that the answer is affirmative. Indeed,
\begin{Theorem}
\label{main-theorem}
Let \(\varphi,\tilde\varphi\colon M\to N\) be two $k$-harmonic maps, $k \geq 3$. If they agree on an open subset $U$ of $M$, then they are identical.
\end{Theorem}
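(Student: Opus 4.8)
The plan is to reduce the statement to a unique continuation property for a single $2k$-th order elliptic differential inequality satisfied by the difference of the two maps in Euclidean coordinates, and then run the standard open/closed argument on the connected manifold $M$. Fix an isometric embedding $\iota\colon N\hookrightarrow\R^q$, and for a map $\psi\colon M\to N$ put $f=\iota\circ\psi\colon M\to\R^q$. Using $\tau(\iota\circ\psi)=d\iota(\tau(\psi))+\operatorname{tr}\sff^N(d\psi,d\psi)$ together with the relation between the connection $\bar\nabla$ on $\psi^{-1}TN$ and the flat connection on $f^{-1}T\R^q$, one checks by induction that $\Delta^kf$ (the Laplace--Beltrami operator of $(M,g)$ acting componentwise) equals $d\iota\bigl(\bar\Delta^{k-1}\tau(\psi)\bigr)$ plus terms in which no single factor carries more than $2k-1$ derivatives of $f$; and solving \eqref{tension-2s}, resp.\ \eqref{tension-2s+1}, for its leading term shows that $\tau_k(\psi)=0$ is equivalent to $\bar\Delta^{k-1}\tau(\psi)$ being equal to the curvature terms, each of which involves at most $2k-2$ derivatives of $\psi$. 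Hence, in any coordinate chart, $\psi$ is $k$-harmonic if and only if
\begin{equation*}
\Delta^k f\;=\;Q\bigl(x,f,\nabla f,\dots,\nabla^{2k-1}f\bigr),
\end{equation*}
where $Q$ depends smoothly on $(x,f)$ and polynomially on the derivatives $\nabla^jf$, $1\le j\le 2k-1$; in particular the only genuinely $2k$-th order term is the diagonal operator $\Delta^k$.

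Next I would apply this to $f=\iota\circ\varphi$ and $\tilde f=\iota\circ\tilde\varphi$ and subtract. As $\varphi$ and $\tilde\varphi$ are smooth, all their derivatives are bounded on any relatively compact coordinate domain $\Omega\subset M$; since $Q$ and its derivatives are locally Lipschitz (so that the difference of the coefficients is $O(|f-\tilde f|)$), the difference $w:=f-\tilde f$ satisfies
\begin{equation*}
\bigl|\Delta^kw\bigr|\;\le\;C_\Omega\sum_{j=0}^{2k-1}\bigl|\nabla^jw\bigr|\qquad\text{on }\Omega .
\end{equation*}
The final analytic ingredient is a unique continuation principle of Aronszajn type for such an inequality: if $w$, defined on a connected open set, satisfies it and vanishes on a nonempty open subset, then $w$ vanishes identically. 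For $k=2$ this is exactly the tool used in the biharmonic case \cite{MR3990379}; for general $k$ it can be obtained by setting $u_0=w$ and $u_i=\Delta^iw$, noting that $(u_0,\dots,u_{k-1})$ solves the second order elliptic system $\Delta u_i=u_{i+1}$ $(0\le i\le k-2)$, $|\Delta u_{k-1}|\le C\sum_j|\nabla^jw|$, bounding the intermediate derivatives $\nabla^jw$ by the $u_i$ and $\nabla u_i$ via interior elliptic estimates, and applying Aronszajn's unique continuation theorem for second order elliptic systems.

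With this in hand the conclusion is routine: let $A\subset M$ be the set of points having a neighbourhood on which $\varphi=\tilde\varphi$; it is open and contains $U$. If $p\in\overline A$, choose a connected coordinate neighbourhood $\Omega$ of $p$ with $\overline\Omega$ compact; then $w=\iota\circ\varphi-\iota\circ\tilde\varphi$ obeys the inequality above on $\Omega$ and vanishes on the nonempty open subset $A\cap\Omega$, so $w\equiv0$ on $\Omega$ and $p\in A$. Hence $A$ is open and closed, so $A=M$ since $M$ is connected, i.e.\ $\varphi=\tilde\varphi$. I expect the main obstacle to be the first step: one has to carry out the derivative bookkeeping in \eqref{tension-2s}--\eqref{tension-2s+1} carefully enough to be sure that, after passing to the Euclidean picture, the curvature and lower order terms never place more than $2k-1$ derivatives on a single factor, so that subtracting two solutions really produces the displayed inequality with no uncontrolled top-order contribution; the other new ingredient, the higher order Aronszajn-type unique continuation theorem, is the analytic heart but, as sketched, reduces to the classical second order statement (and for $k=2$ is already available in \cite{MR3990379}).
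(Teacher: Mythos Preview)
Your overall strategy---pass to an isometric embedding $N\hookrightarrow\R^q$, derive a single $2k$-th order inequality for $w=\iota\circ\varphi-\iota\circ\tilde\varphi$, and then globalize---is a legitimate alternative to the paper's intrinsic approach (which never leaves local charts on $N$). The place where the argument breaks is the reduction to Aronszajn. With $u_i=\Delta^i w$, $0\le i\le k-1$, the relations $\Delta u_i=u_{i+1}$ are fine, but the last inequality reads $|\Delta u_{k-1}|\le C\sum_{j\le 2k-1}|\nabla^j w|$, and you propose to control the full jets $\nabla^j w$ by the $u_i,\nabla u_i$ ``via interior elliptic estimates''. That cannot be done \emph{pointwise}: already $|\nabla^2 w(p)|$ is not bounded by $w(p),\nabla w(p),\Delta w(p),\nabla\Delta w(p)$. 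Elliptic regularity bounds $\nabla^j w$ only on a \emph{smaller} ball in terms of data on a larger one; it does not yield the pointwise inequality \eqref{aro-voraus} on a fixed domain that Theorem~\ref{aro-theorem} requires. So the tower $(u_0,\dots,u_{k-1})$ does not close, and your sketch does not actually produce a system to which Aronszajn applies.

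This is precisely the difficulty the paper's proof is engineered around. Rather than iterated Euclidean Laplacians it takes the intrinsic quantities $u_j=(\bar\Delta^j\tau(\varphi))^\alpha$ \emph{together with their full gradients} $v_j=\nabla u_j$ (and $\varphi,d\varphi$) as new unknowns. The point is that the curvature terms in \eqref{tension-2s}--\eqref{tension-2s+1} involve only $\bar\Delta^j\tau$ and $\bar\nabla\bar\Delta^j\tau$, hence only $u_j,v_j$ in local coordinates; and the commutator computation \eqref{Deltav0} shows that $\Delta v_j$ is controlled by the variables already in the vector and their first derivatives. Thus the enlarged system genuinely satisfies \eqref{aro-voraus}. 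Your extrinsic rewriting loses this structure: the second-fundamental-form corrections from the embedding introduce honest mixed derivatives $\nabla^j f$, not merely $\Delta^j f$, and once those appear on the right-hand side the simple Laplacian tower cannot absorb them. If you want to salvage the extrinsic route you must either enlarge the unknown to include all partial derivatives of $w$ up to order $2k-2$ and check closure at the top layer, or invoke a higher-order Carleman/unique-continuation theorem for $\Delta^k$ with lower-order terms of order $\le 2k-1$ directly (in the spirit of \cite{MR0113030}); the reduction you wrote does neither.
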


Moreover, we will also give a geometric application of Theorem \ref{main-theorem}
extending corresponding results for harmonic \cite[Theorem 6]{MR510549} and biharmonic maps \cite[Theorem 1.6]{MR3990379}.

\begin{Theorem}
\label{theorem-totally-geodesic-sphere}
Let \(\varphi\colon M\to \mathbb{S}^n\) be a \(k\)-harmonic map.
If an open subset of \(M\) is mapped into the equator \(\mathbb{S}^{n-1}\),
then all of \(M\) is mapped into \(\mathbb{S}^{n-1}\).
\end{Theorem}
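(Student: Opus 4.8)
The plan is to reduce this statement to Theorem \ref{main-theorem} by a reflection argument, in the spirit of the harmonic \cite[Theorem 6]{MR510549} and biharmonic \cite[Theorem 1.6]{MR3990379} cases. For $k=1,2$ the result is exactly those two theorems, so we may assume $k\geq 3$. Realise $\mathbb{S}^n\subset\mathbb{R}^{n+1}$ with equator $\mathbb{S}^{n-1}=\{x\in\mathbb{S}^n:x_{n+1}=0\}$, and let $\rho\colon\mathbb{S}^n\to\mathbb{S}^n$ be the reflection $\rho(x_1,\dots,x_n,x_{n+1})=(x_1,\dots,x_n,-x_{n+1})$. As the restriction to $\mathbb{S}^n$ of a linear orthogonal transformation of $\mathbb{R}^{n+1}$, $\rho$ is an isometry of $\mathbb{S}^n$ whose fixed-point set is precisely the equator $\mathbb{S}^{n-1}$.

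The first step is to record that $k$-harmonicity is invariant under post-composition with an isometry $\iota$ of the target. Since an isometry is totally geodesic, $d\iota$ is a parallel bundle isomorphism; hence it intertwines the pulled-back connections $\bar\nabla$, the rough Laplacians $\bar\Delta$ and the tension fields $\tau$, and moreover $\iota^{\ast}R^N=R^N$. Going through \eqref{tension-2s} and \eqref{tension-2s+1} term by term, this gives $\tau_k(\iota\circ\varphi)=d\iota\bigl(\tau_k(\varphi)\bigr)$, so $\varphi$ is $k$-harmonic if and only if $\iota\circ\varphi$ is.

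The second step is the application itself. Set $\tilde\varphi:=\rho\circ\varphi\colon M\to\mathbb{S}^n$, which is $k$-harmonic by the previous step. By hypothesis there is a nonempty open set $U\subset M$ with $\varphi(U)\subset\mathbb{S}^{n-1}$, and since $\mathbb{S}^{n-1}$ is fixed pointwise by $\rho$ we get $\tilde\varphi=\rho\circ\varphi=\varphi$ on $U$. As $M$ is connected and $k\geq 3$, Theorem \ref{main-theorem} applies and yields $\tilde\varphi=\varphi$ on all of $M$, i.e.\ $\rho\circ\varphi=\varphi$ everywhere; equivalently $\varphi(M)\subset\mathrm{Fix}(\rho)=\mathbb{S}^{n-1}$, which is the assertion.

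Since the substance of the argument is entirely carried by Theorem \ref{main-theorem}, the only point needing care is the invariance of the polyharmonic equation under isometries of the target; thanks to the naturality of $\bar\nabla$, $\bar\Delta$ and $R^N$ this is a routine bookkeeping check rather than a genuine obstacle, and it can also be phrased intrinsically by noting that post-composition with $\iota$ induces a vector bundle isometry $\varphi^{-1}T\mathbb{S}^n\to(\iota\circ\varphi)^{-1}T\mathbb{S}^n$ compatible with all the structure entering \eqref{tension-2s}--\eqref{tension-2s+1}.
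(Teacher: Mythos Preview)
Your argument is correct, and it takes a genuinely different route from the paper's own proof. The paper proceeds by a direct application of Aronszajn's theorem to the $n$-th component of $\varphi$: it writes $\mathbb{S}^n\setminus\{N,S\}$ as a warped product over $\mathbb{S}^{n-1}$, computes the relevant Christoffel symbols and curvature components explicitly, and then verifies by hand (over three lemmas) that the vector $y=(f,df,u_0^n,v_0^n,\ldots,u_{k-2}^n)$, with $f=\varphi^n-\pi/2$, satisfies an Aronszajn-type inequality and vanishes on $W$. Your reflection trick bypasses all of this by reducing the statement to Theorem~\ref{main-theorem}.

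The trade-off is the following. Your approach is much shorter and conceptually clean; moreover it immediately generalises to any totally geodesic submanifold of the target that arises as the fixed-point set of an isometric involution. On the other hand, the paper's computation is logically independent of Theorem~\ref{main-theorem} and displays exactly which local quantities (only the $n$-th components $u_j^n,v_j^n$) need to be controlled, a template that could in principle be adapted to situations where no suitable reflection is available. The invariance step you flag---$\tau_k(\iota\circ\varphi)=d\iota(\tau_k(\varphi))$ for an isometry $\iota$---is indeed routine and follows from the naturality of $\tau$, $\bar\nabla$, $\bar\Delta$ and $R^N$ under the bundle isometry $d\iota\colon\varphi^{-1}T\mathbb{S}^n\to(\iota\circ\varphi)^{-1}T\mathbb{S}^n$, exactly as you describe.
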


The Euler-Lagrange system of equations which defines a $k$-harmonic map is elliptic of order $2k$ and has a rather complicated expression which depends on the Riemannian curvature tensor field of the target. Therefore, although all our proofs are based just on the application of the classical Aronszajn's unique continuation principle for second order elliptic operators, the technical steps that we shall have to carry out  are quite demanding and require a delicate use of suitable new variables. 

For the sake of completeness, we point out that another interesting generalization of both harmonic and biharmonic maps can be obtained
by studying the critical points of the following higher order energies:
\begin{align}
\label{k-energy-es}
E^{ES}_k(\varphi)=\frac{1}{2} \int_M|(d+d^\ast)^k\varphi|^2\dv\, ,\qquad k=1,2,\ldots
\end{align}
The study of these functionals was proposed by Eells and Sampson in 1965 (see \cite{MR0216519}) and, later, by
Eells and Lemaire in 1983 \cite[p.77, Problem (8.7)]{MR703510}.
A rigorous mathematical investigation of \eqref{k-energy-es}
has recently been initiated by the authors in \cite{MR4106647} and was further developed in \cite{br2020ES,MOR-arxiv}. We point out that, in general, the functional \eqref{k-energy-es} coincides with $E_k(\varphi)$ introduced above only when $k=1,2,3$.

In Section \ref{section-es4} we will show that Theorems \ref{Th-harmonic-everywhere} and \ref{main-theorem} also hold
for the critical points of \eqref{k-energy-es} in the case \(k=4\), which is the only case where the Euler-Lagrange equations are explicitly computed (see \cite{MR4106647}). However, since the technical difficulties are huge, we have preferred not to investigate other possible extensions of the unique continuation principle.

Throughout this article we shall use the following sign conventions and notations.
The Riemannian curvature
tensor field on a manifold $N$ is \(R^N(X,Y)Z=[\nabla_X,\nabla_Y]Z-\nabla_{[X,Y]}Z\) and, when the context is clear, we shall simply write $R$ instead of $R^N$. As for the rough Laplacian on \(\varphi^{-1}TN\), we shall use
\(\bar\Delta:=-\tr_g(\bar\nabla\bar\nabla-\bar\nabla_\nabla)\).
Similarly, the sign of the Laplace operator $\Delta$ on functions is such that $\Delta f= -f''$ on $\R$.

In general, we will use the same symbol $\langle\cdot,\cdot\rangle$ to indicate the Riemannian metrics on various vector bundles. We also note that on \(0\)-forms, that is on sections, \(d = \nabla\).

When the range is not explicitly specified we will use Latin indices \(i,j,k\) for indices on the domain ranging from \(1\) to \(m\)
and Greek indices \(\alpha,\beta,\gamma\) for indices on the target which take values between \(1\) and \(n\).
When the range of the indices is from $1$ to $q$, for some positive integer $q$, we will often denote them by \(a,b,c\).

We will use the Einstein summation convention, i.e., we will sum over repeated indices in the diagonal position. 

Most of our computations will be carried out in local charts and our convention concerning the indices of the sectional curvature tensor field is 
\begin{equation}\label{Convention-R}
R\left (\frac{\partial}{\partial y^\beta},\frac{\partial}{\partial y^\gamma} \right ) \frac{\partial}{\partial y^\delta}=R^\alpha_{\delta \beta \gamma}\, \frac{\partial}{\partial y^\alpha} \,.
\end{equation}
\section{Proof of the main results}

We recall the following classical result due to Aronszajn \cite[p.248]{MR0092067}:

\begin{Theorem}
\label{aro-theorem}
Let \(A\) be a second-order linear elliptic differential operator of class \(C^{\infty}\) defined on an open subset \(D\) of \(\R^m\).
Let \(u=(u^1,\ldots,u^q)\) be a function on \(D\) satisfying the inequality
\begin{equation}
\label{aro-voraus}
\left|Au^a\right|\leq C \left(\sum_{b,i}\left|\frac{\partial u^{b}}{\partial x^i}\right|+\sum_{b}\left|u^{b}\right|\right)
\end{equation}
for some $C>0$. If \(u=0\) in an open subset of $D$, then \(u=0\) throughout \(D\).
\end{Theorem}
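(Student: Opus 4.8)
\emph{This is Aronszajn's classical unique continuation theorem, so the argument has to be self-contained.} The plan has four stages: reduce the global statement to a local propagation statement by a connectedness argument; normalize the principal part of $A$; establish a weighted $L^{2}$ (Carleman) estimate for that principal part; and finally insert a cutoff into the Carleman estimate and send the weight parameter to infinity. First I would set $Z=\{x\in D: u\equiv 0 \text{ on a neighbourhood of }x\}$. By hypothesis $Z$ is open and nonempty, and $D$ is connected, so it suffices to show that $Z$ is relatively closed in $D$. Fix $x_{0}\in\overline{Z}\cap D$; after a translation assume $x_{0}=0$, pick a small ball $B_{2\rho}\subset D$, and write $Au=-a^{ij}\partial_{i}\partial_{j}u+b^{i}\partial_{i}u+cu$ with $a^{ij}=a^{ji}$ smooth and uniformly elliptic on $B_{2\rho}$. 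A linear change of coordinates arranges $a^{ij}(0)=\delta^{ij}$; set $Lv:=-a^{ij}(x)\partial_{i}\partial_{j}v$. Absorbing the lower-order coefficients into the constant, \eqref{aro-voraus} becomes
\begin{equation}
\label{uc-plan-ineq}
|Lu|\leq C_{1}\bigl(|\nabla u|+|u|\bigr)\qquad\text{on }B_{2\rho}.
\end{equation}
Since $0\in\overline{Z}$ and $Z$ is open, after shrinking we may fix a ball $B=B_{2\sigma}(p)\subset Z$ with $|p|$ as small as we like compared with $\rho$; in particular $u\equiv0$ on $B$. The goal of the remaining stages is to show $u\equiv0$ on a fixed neighbourhood of $0$, whence $0\in Z$.

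\emph{Carleman estimate.} The key claim is that one can choose a weight $\phi\in C^{\infty}(\overline{B_{2\rho}}\setminus\{p\})$ with $\nabla\phi\neq0$, radial and strictly decreasing in $|x-p|$, strictly pseudoconvex for $L$ (for instance $\phi(x)=\exp(-\lambda|x-p|^{2})$ with $\lambda$ large, or $\phi(x)=|x-p|^{-\delta}$ with $\delta$ small), together with constants $C_{2},\tau_{0}>0$ such that
\begin{equation}
\label{uc-plan-carleman}
\tau^{3}\!\int e^{2\tau\phi}|v|^{2}\,dx+\tau\!\int e^{2\tau\phi}|\nabla v|^{2}\,dx\;\leq\; C_{2}\!\int e^{2\tau\phi}|Lv|^{2}\,dx
\end{equation}
for all $v\in C_{c}^{\infty}(B_{2\rho}\setminus\{p\})$ and all $\tau\geq\tau_{0}$. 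This is the standard conjugation argument: with $w=e^{\tau\phi}v$ one writes $e^{\tau\phi}L(e^{-\tau\phi}w)=P_{\tau}w$, splits $P_{\tau}=P_{\tau}^{+}+P_{\tau}^{-}$ into its formally self-adjoint and skew-adjoint parts, and expands $\|P_{\tau}w\|_{L^{2}}^{2}=\|P_{\tau}^{+}w\|^{2}+\|P_{\tau}^{-}w\|^{2}+([P_{\tau}^{+},P_{\tau}^{-}]w,w)$; ellipticity of $L$ and the pseudoconvexity of $\phi$ bound the commutator term from below by $c(\tau^{3}\|w\|^{2}+\tau\|\nabla w\|^{2})$ up to errors absorbable for $\tau$ large, and undoing the conjugation yields \eqref{uc-plan-carleman}. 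The gradient term on the left of \eqref{uc-plan-carleman} is essential: it is precisely what will absorb the $|\nabla u|$ on the right of \eqref{uc-plan-ineq}, which is why a differential \emph{inequality} can be handled and not just an equation.

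\emph{Cutoff and limit.} Choose $\chi\in C_{c}^{\infty}(B_{2\rho})$ with $\chi\equiv1$ on a neighbourhood of $0$ and $\chi\equiv0$ on a neighbourhood of $p$ (legitimate because $u$, hence $\chi u$, vanishes near $p$, so $v:=\chi u\in C_{c}^{\infty}(B_{2\rho}\setminus\{p\})$ and $v=u$ near $0$). One has $Lv=\chi\,Lu+[L,\chi]u$, where $[L,\chi]u$ is a first-order expression in $u$ supported in $\operatorname{supp}\nabla\chi$, which may be taken to consist of a piece near $p$ (where $u=0$, so no contribution) and a piece near $\partial B_{2\rho}$. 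Since $\phi$ decreases away from $p$ and $|p|\ll\rho$, the point $0$ is much closer to $p$ than $\partial B_{2\rho}$ is, so there are constants $\phi_{1}<\phi_{0}$ with $\phi\leq\phi_{1}$ on the outer part of $\operatorname{supp}\nabla\chi$ and $\phi\geq\phi_{0}$ on the neighbourhood $B_{\delta}(0)$ where $v=u$. Feeding $v$ into \eqref{uc-plan-carleman}, using \eqref{uc-plan-ineq} to control $|\chi\,Lu|$ (these terms get absorbed into the left-hand side for $\tau$ large) and bounding the commutator contribution by $C_{3}e^{2\tau\phi_{1}}$, we obtain $\tau^{3}e^{2\tau\phi_{0}}\!\int_{B_{\delta}(0)}|u|^{2}\leq C_{4}e^{2\tau\phi_{1}}$; dividing by $e^{2\tau\phi_{1}}$ and letting $\tau\to\infty$ forces $\int_{B_{\delta}(0)}|u|^{2}=0$. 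Hence $u\equiv0$ near $0$, i.e. $0=x_{0}\in Z$, so $Z$ is relatively closed in the connected set $D$, and therefore $Z=D$.

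\emph{Main obstacle.} The heart of the matter is the Carleman estimate \eqref{uc-plan-carleman} \emph{with the gradient term on the left}: one must verify strict pseudoconvexity of the weight and carry out the (routine but lengthy) commutator computation while tracking the powers of $\tau$. A secondary technical point is the bookkeeping in the last stage, namely placing the singularity of $\phi$ inside the open set where $u$ already vanishes and arranging its level sets so that vanishing genuinely propagates to a full neighbourhood of $x_{0}$. (An alternative route replacing the two middle stages is the frequency-function/three-spheres method: one shows that Almgren's frequency $N(r)=r\int_{B_{r}}a^{ij}\partial_{i}u\,\partial_{j}u\,\big/\int_{\partial B_{r}}|u|^{2}$ is almost monotone for solutions of \eqref{uc-plan-ineq}, derives a doubling inequality, and deduces that $u$ cannot vanish to infinite order at a point of $\overline{Z}\cap D$ unless it vanishes identically nearby; this again hinges on a delicate differential inequality, now for $N$.)
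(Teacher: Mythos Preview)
The paper does not prove this statement at all: Theorem~\ref{aro-theorem} is simply quoted as ``the following classical result due to Aronszajn \cite[p.~248]{MR0092067}'' and used as a black box throughout. So there is nothing to compare your argument against; you have written a self-contained sketch of Aronszajn's theorem where the paper merely cites it.

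That said, your outline is the standard modern route (Carleman estimate with a strictly pseudoconvex weight, conjugation and commutator lower bound, cutoff and absorption, then a connectedness argument). Two small remarks. First, the theorem as stated in the paper does not explicitly assume $D$ connected; you silently add this, and it is of course necessary for the conclusion as phrased. Second, Aronszajn's original 1957 proof is organized somewhat differently from the H\"ormander-style presentation you give: he works directly with weighted integral inequalities in geodesic polar coordinates rather than via the $P_\tau^{+}/P_\tau^{-}$ splitting, and his weights are powers of the distance rather than exponentials. The two approaches are essentially equivalent in strength, but if you intend a faithful reconstruction of \cite{MR0092067} you would need to follow his formulation; if you only need \emph{some} proof of the statement, your sketch is fine modulo actually carrying out the commutator computation you flag as ``routine but lengthy''.
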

\begin{Bem} In the literature, some unique continuation results for higher order elliptic equations are available (for instance, see \cite{MR0113030}). However, the great generality of the elliptic operator $A$ in \eqref{aro-voraus} persuaded us that Theorem \ref{aro-theorem} is the most effective available tool to achieve unique continuation in the context of polyharmonic maps. Moreover, as pointed out in \cite{MR510549}, also the strong version of the unique continuation principle holds, i.e., the conclusion of Theorem \ref{aro-theorem} is still true if $u=0$ to infinite order at some point. Therefore, as in \cite{MR510549}, both Theorems \ref{Th-harmonic-everywhere} and \ref{main-theorem} admit a strong formulation and the proof is the same.
\end{Bem}

In order to prove our results we need to write down a suitable local expression for the equations for polyharmonic maps \eqref{tension-2s}, \eqref{tension-2s+1}.

To this end, let us choose a local chart \((U,x^i)\) on \(M\) and a local chart \((V,y^\alpha)\) on \(N\) such that \(\varphi(U)\subset V\). To simplify the notation, we shall denote by $\varphi$ the expression of $\varphi$ in the two local charts.

It is well-known that, in local coordinates, the tension field is given by
\begin{align}
\label{tension-local}
\tau^\alpha(\varphi)=-\Delta\varphi^\alpha
+g^{ij}\, \Gamma_{\vartheta\beta}^\alpha\, \frac{\partial\varphi^\vartheta}{\partial x^i} \frac{\partial\varphi^\beta}{\partial x^j}\, ,\qquad 1\leq\alpha\leq n\, ,
\end{align}
where  \(\Gamma_{\vartheta\beta}^\alpha\) represent the Christoffel symbols of the manifold \(N\). We also recall that the Laplace-Beltrami operator $\Delta$ acts locally on a function $f\colon U \to \R$ as follows:
\[
-\Delta f = g^{ij}\, \frac{\partial^2 f}{\partial x^i \partial x^j}-g^{ij}\, \Gamma_{ij}^k\, \frac{\partial f}{\partial x^k}\, ,
\]
where here \(\Gamma_{ij}^k\) are the Christoffel symbols of the Riemannian manifold \(M\). Moreover, if 
\[
\sigma = \sigma^\alpha \frac{\partial}{\partial y^\alpha}
\]
is a section of $\varphi^{-1}TN$, then (see \cite[Lemma~1.1]{Ou}) 
\begin{equation}\label{eq:laplacian-on-sections}
\begin{split}
\bar{\Delta} \sigma =&\Big\{ \Delta\sigma^{\alpha}  -2\, g^{ij}\, \frac{\partial \sigma^\vartheta}{\partial x^j}\, \varphi_i^\beta\, \Gamma_{\beta \vartheta}^\alpha \\
& +\sigma^\vartheta  \left [ (\Delta \varphi^\beta)\, \Gamma_{\beta \vartheta}^\alpha - g^{ij}\, \varphi_j^\beta\, \varphi_i^\omega \; S^{\alpha}_{\beta \omega \vartheta}\right]\Big\} \frac{\partial}{\partial y^\alpha}\,,
\end{split}
\end{equation}
where 
\begin{equation}\label{eq:definitionofS}
2S^{\alpha}_{\beta \omega \vartheta} :=\frac{\partial \Gamma_{\beta \vartheta}^\alpha}{\partial y^\omega} + \Gamma_{\beta \vartheta}^\gamma\, \Gamma_{\omega \gamma}^\alpha+
\frac{\partial \Gamma_{\omega \vartheta}^\alpha}{\partial y^\beta} + \Gamma_{\omega \vartheta}^\gamma\, \Gamma_{\beta \gamma}^\alpha\, 
\end{equation}
and, for simplicity, $\varphi_i^\beta=\partial \varphi^\beta / \partial x^i$. 

\begin{proof}[Proof of Theorem \ref{Th-harmonic-everywhere}] 
Since the proof will involve several rather technical arguments, we have preferred to start giving all the geometrical details in the special case that $k=3$. We believe that this should help the reader to follow the various steps which will be necessary to handle the general case.

\textbf{Case $k =3$.} 

Using \eqref{tension-2s+1}, we write
\begin{equation}\label{tau3}
\tau_3(\varphi)=\bar\Delta^2\tau(\varphi)-\tr R^N(\bar{\Delta}\tau(\varphi),d\varphi(\cdot))d\varphi(\cdot)-\tr R^N(\bar{\nabla}_{(\cdot )}\tau(\varphi),\tau(\varphi))d\varphi(\cdot) \, .
\end{equation}
We fix the notation as follows:
\begin{eqnarray*}
\tau(\varphi)&=&u_0^\alpha \frac{\partial}{\partial y^\alpha} \\
\bar{\Delta}\tau(\varphi)&=& \left (\Delta u_0^\alpha+A_1^\alpha \right )\frac{\partial}{\partial y^\alpha}=  u_1^\alpha\frac{\partial}{\partial y^\alpha}\, ,
\end{eqnarray*}
with
\begin{equation}\label{def-A}
A_1^\alpha=A^\alpha \left ( u_0^\vartheta, \frac{\partial u_0^\vartheta}{\partial x^i}\right )\, ,
\end{equation}
where

$$
A=\left(A^{\alpha}\right):\R^n \times(\R^m\times \R^n)\to \R^n
$$
is defined, for $(\eta^{\vartheta},\xi^{\vartheta}_i)\in \R^n \times(\R^m\times \R^n)$, according to \eqref{eq:laplacian-on-sections}, by

\begin{eqnarray*}
A^\alpha \left (\eta^{\vartheta},\xi^{\vartheta}_i\right ) &=&\;\; \xi^{\vartheta}_i\, \left[-2\, g^{ij}\, \varphi_j^\beta\, \Gamma_{\beta \vartheta}^\alpha\right] \\\nonumber
&&+\eta^{\vartheta}  \left [ (\Delta \varphi^\beta)\, \Gamma_{\beta \vartheta}^\alpha - g^{ij}\, \varphi_j^\beta\, \varphi_i^\omega \; S^{\alpha}_{\beta \omega \vartheta} \right]\, .
\end{eqnarray*}

It is important to point out that $A^\alpha$ is linear with respect to $\eta^{\vartheta}$ and $\xi^{\vartheta}_i$. 

All of this can be iterated once more and yields
\[
\bar{\Delta}^2\tau(\varphi)= \left (\Delta u_1^\alpha+A_2^\alpha \right )\frac{\partial}{\partial y^\alpha}\, ,
\]
where now, of course,
\[
A_2^\alpha=A^\alpha \left ( u_1^\vartheta, \frac{\partial u_1^\vartheta}{\partial x^i}\right )\, .
\]
Now, let us assume that $\tau_3(\varphi)=0$. Using \eqref{tau3} and computing we find that this is locally equivalent to the following system of equations:
\begin{equation}\label{tau3+explicit}
\Delta u_1^\alpha = \left (F^3 \right )^\alpha \, ,
\end{equation}
where
\begin{eqnarray}\label{F3}\nonumber
\left(F^3 \right)^\alpha &=&-A^\alpha_2 -\Big [- \tr R^N(\bar{\Delta}\tau(\varphi),d\varphi(\cdot))d\varphi(\cdot)-\tr R^N(\bar{\nabla}_{(\cdot )}\tau(\varphi),\tau(\varphi))d\varphi(\cdot) \Big ]^\alpha\\
&=&-A^\alpha_2-g^{ij}\, u_1^\vartheta\, \varphi_i^\beta\, \varphi_j^\omega\, R^{\alpha}_{\omega \beta \vartheta}+g^{ij}\, \frac{\partial u_0^\beta}{\partial x^i}\, u_0^\vartheta\, \varphi_j^\omega \, R^{\alpha}_{\omega \beta \vartheta}\\ \nonumber
&&+g^{ij}\, u_0^\sigma\, u_0^\vartheta\, \varphi_i^\gamma\, \varphi_j^\omega\, \Gamma_{\gamma \sigma}^\beta\, R^{\alpha}_{\omega \beta \vartheta}\, .
\end{eqnarray}
Now, we define the $\R^n$-valued $1$-form
\[
v_0:= du_0=\left (du_0^1,\ldots,du_0^n \right )=\frac{\partial u_0^\alpha}{\partial x^i}\, dx^i \otimes e_\alpha \, ,
\]
where $\{e_\alpha\}$ is the canonical basis of $\R^n$ and $u_0$ is thought of as an $\R^n$-valued function defined on $U$. The components of the $1$-form $v_0$ give rise to an $\R^{mn}$-valued function defined on $U$:
\[
v_0= \left ( v_{0i}^\alpha \right )=\left ( \frac{\partial u_0^\alpha}{\partial x^i} \right ) \, , \qquad 1 \leq i \leq m,\quad 1 \leq \alpha \leq n \,.
\]
For simplicity, we keep the same notation for the $1$-form and the $\R^{mn}$-valued function. Note that here the index $i$ in $v_{0i}^\alpha$ does \textit{not} mean the derivative with respect to the variable $x^i$ ($v_{0}^\alpha$ does not even exist!). With this notation, \eqref{F3} can be rewritten as follows:
\begin{eqnarray}\label{F3-with-v0}
\left (F^3 \right )^\alpha &=&-A^\alpha_2-g^{ij}\, u_1^\vartheta\, \varphi_i^\beta\, \varphi_j^\omega\, R^{\alpha}_{\omega \beta \vartheta}+g^{ij}\, v_{0i}^\beta\, u_0^\vartheta\, \varphi_j^\omega\, R^{\alpha}_{\omega \beta \vartheta}\\ \nonumber
&&+g^{ij}\, u_0^\sigma\, u_0^\vartheta\, \varphi_i^\gamma\, \varphi_j^\omega\, \Gamma_{\gamma \sigma}^\beta\, R^{\alpha}_{\omega \beta \vartheta}\,.
\end{eqnarray}
Then, taking into account the definition of $A^\alpha_2$, we conclude that $F^3=\left(\left (F^3 \right )^\alpha\right)$ depends on $u_0,v_0,u_1$ and $\{\partial u_1^\alpha / \partial x^i\}_{i,\alpha}$. Moreover, for future use, we deduce from \eqref{def-A} and inspection of \eqref{F3-with-v0} that there exists $C>0$ such that on $D$ we have
\begin{equation}\label{F3-aro-estimate}
\left | F^3  \right | \leq C \left [\sum_{\vartheta}\left ( |u_0^\vartheta | +|u_1^\vartheta |\right )+\sum_{i,\vartheta} |v_{0i}^\vartheta |+\sum_{i,\vartheta}
\left |\frac{\partial u_{1}^\vartheta}{\partial x^i}  \right | \right ],
\end{equation}
where $D$ is an open subset of $M$ such that its closure is compact and contained in $U$.

Indeed, in \eqref{def-A} and \eqref{F3-with-v0} it is possible to bound from above by means of a constant any of the functions which appear as a multiplicative coefficient of $u_0,v_0,u_1$ and $\{\partial u_1^\alpha / \partial x^i\}_{i,\alpha}$. From this \eqref{F3-aro-estimate} can be obtained easily.

We shall also need to estimate the Laplacian of the $\R^{mn}$-valued function $v_0$. To this purpose, we perform a computation which gives the following output:
\begin{eqnarray}\label{Deltav0}
\nonumber
\left (\Delta v_0\right )_{i}^{\alpha} &=& \Delta \left ( \frac{\partial u_0^\alpha}{\partial x^i}\right ) \\
\nonumber &=& \frac{\partial\left ( \Delta u_0^\alpha \right )}{\partial x^i}+\frac{\partial g^{kj}}{\partial x^i}\frac{\partial^2 u_0^\alpha}{\partial x^k \partial x^j}
-\frac{\partial g^{\ell j}}{\partial x^i}\, \Gamma_{\ell j}^k\, \frac{\partial u_0^\alpha}{\partial x^k}-g^{\ell j}\, \frac{\partial \Gamma_{\ell j}^k}{\partial x^i}\frac{\partial u_0^\alpha}{\partial x^k}\\\nonumber
&=&\frac{\partial u_1^\alpha}{\partial x^i}+ \frac{\partial}{\partial x^i} 
\left\{ 2g^{\ell j}\, v_{0j}^\vartheta\, \varphi^\beta_{\ell}\, \Gamma^\alpha_{\beta \vartheta} 
-u_0^\vartheta  \left [ (\Delta \varphi^\beta)\, \Gamma_{\beta \vartheta}^\alpha -g^{\ell j}\, \varphi_j^\beta\, \varphi_{\ell}^\omega\; S^{\alpha}_{\beta \omega \vartheta}\right] \right \}\\
&&+\frac{\partial g^{kj}}{\partial x^i} \frac{\partial v_{0k}^\alpha}{\partial x^j}-\frac{\partial g^{\ell j}}{\partial x^i}\, \Gamma_{\ell j}^k\, v_{0k}^\alpha - g^{\ell j}\, \frac{\partial \Gamma_{\ell j}^k}{\partial x^i}\, v_{0k}^\alpha\, .
\end{eqnarray}
Now, we call $F^2=\left ( \left (F^2 \right )_i^\alpha\right )$ the right hand side of \eqref{Deltav0}. Thus, $F^2$ depends on $u_0,v_0,\{\partial v_{0i}^\alpha / \partial x^j\}_{i,j,\alpha}$ and $\{\partial u_1^\alpha / \partial x^i\}_{i,\alpha}$ and it is linear in each of them. Then, similarly to \eqref{F3-aro-estimate}, it is easy to deduce from \eqref{Deltav0} the following estimate on $D$:
\begin{equation}\label{F2-aro-estimate}
\left | F^2  \right | \leq C \left [\sum_{\vartheta} |u_0^\vartheta | + \sum_{i,\vartheta}|v_{0i}^\vartheta| + \sum_{i,\vartheta}
\left |\frac{\partial u_{1}^\vartheta}{\partial x^i}  \right | + \sum_{i,j,\vartheta}
\left |\frac{\partial v_{0i}^\vartheta}{\partial x^j}  \right |\right ]\, 
\end{equation}
for a suitably large constant $C>0$.

Finally, we also define the function $F^1=\left (\left (F^1 \right )^\alpha\right )$ as follows:
\[
\left (F^1\right )^\alpha =u_1^\alpha-A_1^\alpha\, .
\]
We note that $F^1$ depends on $u_0,v_0$ and $u_1$ and it is linear in each of them. It follows that there exists $C>0$ such that
\begin{equation}\label{F1-aro-estimate}
\left | F^1  \right | \leq C \left [\sum_{\vartheta}\left ( |u_0^\vartheta | + |u_1^\vartheta |\right ) + \sum_{i,\vartheta}|v_{0i}^\vartheta | \right ]\, .
\end{equation}
The next step is to consider $u_0,v_0,\{\partial v_{0i}^\alpha / \partial x^j\}_{i,j,\alpha},u_1$ and $\{\partial u_1^\alpha / \partial x^i\}_{i,\alpha}$ not as vector-valued functions defined on $U$, but as a set of independent variables. More specifically, we define
\begin{equation}\label{F-casok=3}
u=\left (u_0,v_0,u_1 \right )\in \R^{r_2} \quad {\rm and}\quad F=\left ( F^1,F^2,F^3\right ) \colon \mathbb{R}^{r_1}\to \mathbb{R}^{r_2}\, ,
\end{equation}
where $r_1=n+nm + n m^2 + n + nm$ and $r_2=n + n m + n$.
Thus, formally
\[
F=F \left ( u, \left (\frac{\partial u^a}{\partial x^i} \right )\right )\, .
\]
Next, if we think of $u$ as an $\mathbb{R}^{n(m+2)}$-valued function defined on $U$,  by construction we have $\Delta u = F$. Now, since by assumption $\varphi$ is harmonic on an open subset of $D$, it is clear that $u$ vanishes on that open subset. Moreover, since we have proved the estimates \eqref{F3-aro-estimate}, \eqref{F2-aro-estimate} and \eqref{F1-aro-estimate} on $D$, we can apply Theorem \ref{aro-theorem}. Then the statement follows precisely by the same globalization argument which was detailed in \cite[Proof of Theorem~1.3]{MR3990379}. Thus the proof of Theorem \ref{Th-harmonic-everywhere} is complete in the special case $k=3$.

\begin{Bem} 
\label{remark-differential}
In the previous proof, we have worked with the partial derivatives $\{\partial v_{0i}^\alpha / \partial x^j\}_{i,j,\alpha}$. This choice is the most suitable for the purposes of this paper and, particularly, for the extensions to the case $k \geq 4$ which will be illustrated below. However, we point out that it is also possible to work with covariant derivatives
\[
\left \{\nabla_j v_{0i}^\alpha\right \}_{i,j,\alpha}= \left \{ \frac{\partial v_{0i}^\alpha}{\partial x^j}-\Gamma_{ji}^k\, v_{0k}^\alpha \right \}_{i,j,\alpha}\, .
\]

Indeed, the difference
\[
\nabla_j v_{0i}^\alpha- \frac{\partial v_{0i}^\alpha}{\partial x^j}
\]
is linear in $\{ v_{0i}^\alpha\}_{i,\alpha}$. We also mention that $\nabla_iu^{\alpha}_1=\partial u^{\alpha}_1/\partial x^i$. Consequently, the basic estimates \eqref{F3-aro-estimate}, \eqref{F2-aro-estimate} and \eqref{F1-aro-estimate} still hold, conceptually with the same proofs. In particular, the choice of working with covariant derivatives was adopted in \cite{MR3990379}.
\end{Bem}

\begin{Bem} We point out that in \cite{MR3990379} the definition of $F^2=\left (\left (F^2 \right )_i^\alpha\right )$ should have been given as
\begin{equation}\label{eq-remark-cezar}
F^2= dw - \left (\frac{\partial g^{kj}}{\partial x^i}\frac{\partial v_{0k}^{\alpha}}{\partial x^j}-\left (\frac{\partial g^{\ell j}}{\partial x^i}\,  \Gamma_{\ell j}^k+g^{\ell j}\, \frac{\partial \Gamma_{\ell j}^k}{\partial x^i} \right ) v_{0k}^\alpha \right )\, .
\end{equation}
Note that a different sign convention was used in \cite{MR3990379}.
Since the additional term is linear in $\{ v_{0i}^\alpha\}_{i,\alpha}$ and $\{\partial v_{0i}^\alpha / \partial x^j\}_{i,j,\alpha}$, or $\{\nabla_j v_{0i}^\alpha\}_{i,j,\alpha}$, the proofs in \cite{MR3990379} can be completed with minor changes.
\end{Bem}
\textbf{Case $k \geq 4$.}
First, we provide a short illustration of the case $k=4$ because this step may help to understand the idea behind the introduction of a suitable set of recursively defined new variables, a fact which is a key point.

We recall from \eqref{tension-2s} that, when $k=4$, the $4$-tension field is 
\begin{eqnarray}\label{4-tension-explicit}
\tau_4(\varphi)&=&\bar\Delta^3\tau(\varphi)-\tr R^N(\bar{\Delta}^2\tau(\varphi),d\varphi(\cdot))d\varphi(\cdot)\\\nonumber
&&-\tr R^N(\bar{\nabla}_{(\cdot)}\bar{\Delta}\tau(\varphi),\tau(\varphi))d\varphi(\cdot)+\tr R^N(\bar{\Delta}\tau(\varphi),\bar{\nabla}_{(\cdot)}\tau(\varphi))d\varphi(\cdot) \, .
\end{eqnarray}
Now, let us assume that $\tau_4(\varphi)=0$. First, we set again
\begin{eqnarray*}
\tau(\varphi)&=&u_0^\alpha \frac{\partial}{\partial y^\alpha} \\
\bar{\Delta}\tau(\varphi)&=& \left (\Delta u_0^\alpha+A_1^\alpha \right )\frac{\partial}{\partial y^\alpha}=  u_1^\alpha\frac{\partial}{\partial y^\alpha}\, ,
\end{eqnarray*}
where
\[
A_1^\alpha=A^\alpha \left ( u_0^\vartheta, \frac{\partial u_0^\vartheta}{\partial x^i}\right )
\]
is defined in \eqref{def-A}. Analogously, we continue with
\begin{eqnarray*}
\bar{\Delta}^2\tau(\varphi)&=&\left (\Delta u_1^\alpha+A_2^\alpha \right )\frac{\partial}{\partial y^\alpha}=  u_2^\alpha\frac{\partial}{\partial y^\alpha} \\
\bar{\Delta}^3\tau(\varphi)&=& \left (\Delta u_2^\alpha+A_3^\alpha \right )\frac{\partial}{\partial y^\alpha}\, ,
\end{eqnarray*}
where \begin{eqnarray*}
A_j^\alpha&=&A^\alpha \left ( u_{j-1}^\vartheta, \frac{\partial u_{j-1}^\vartheta}{\partial x^i}\right )\, , \quad j=1,2,3 \,.
\end{eqnarray*}
Now, using \eqref{4-tension-explicit}, we find that in our notation the assumption $\tau_4(\varphi)=0$ is equivalent to
\[
\left (\Delta u_2^\alpha \right )\frac{\partial}{\partial y^\alpha}= \left (F^4 \right )^\alpha \frac{\partial}{\partial y^\alpha}\,,
\]
where we have set:
\begin{eqnarray}\label{def-F4}
\left(F^4 \right )^\alpha&=&-A_3^\alpha- \Big [-\tr R^N(\bar{\Delta}^2\tau(\varphi),d\varphi(\cdot))d\varphi(\cdot) \\\nonumber
&&-\tr R^N(\bar{\nabla}_{(\cdot)}\bar{\Delta}\tau(\varphi),\tau(\varphi))d\varphi(\cdot)+\tr R^N(\bar{\Delta}\tau(\varphi),\bar{\nabla}_{(\cdot)}\tau(\varphi))d\varphi(\cdot) \Big ]^\alpha \,.
\end{eqnarray}
We use the following set of variables:
\begin{eqnarray*}
u_0&=& \left (u_0 ^\alpha\right )\,, \quad v_0:=du_0=\nabla u_0 \,;\\
u_1&=& \left (u_1 ^\alpha\right )\,, \quad v_1:=\nabla u_1 \,;\\
u_2&=& \left (u_2 ^\alpha\right ) \,.
\end{eqnarray*}
We define
\[
u=\left ( \begin{array}{l}
u_0\\
v_0\\
u_1\\
v_1\\
u_2
\end{array} \right )
\]
and $F=F\left (u_0,v_0,\nabla v_0, u_1,v_1,\nabla v_1,u_2,\nabla u_2\right )$ as follows:
\[
F=\left ( \begin{array}{l}
u_1-A_1\\
d\left (u_1-A_1\right )\\
u_2-A_2\\
d\left (u_2-A_2\right )\\
F^4
\end{array} \right )
\]
Now, $\Delta u = F\, +$ terms linear in $v_0$ and its first derivatives for the second component of $F$ and $+$  terms linear in $v_1$ and its first derivatives for the fourth component as in \eqref{eq-remark-cezar}. Using the same technique that we employed for the first $2$ vector components of $F$ in the case $k=3$, now the first $2$ vector components of $F$ and, analogously, the vector components $3,4$, can be estimated to ensure the validity of \eqref{aro-voraus}.
As for $F^4$, the explicit analysis of \eqref{def-F4} yields
\begin{equation}\label{def-F4-coor-loc}
\begin{split}
\left (F^4 \right )^\alpha=&-A_3^\alpha-g^{ij}u_2^\vartheta \varphi_i^\beta \varphi_j^\omega\,  R^\alpha_{\omega \beta \vartheta}\\
&+g^{ij}\,v_{1i}^\beta\, u_0^\vartheta \varphi_j^\omega \, R^\alpha_{\omega \beta \vartheta}\\
&+g^{ij}u_1^\sigma u_0^\vartheta \varphi_i^\gamma\varphi_j^\omega\,  \Gamma^\beta_{\gamma \sigma}  \,R^\alpha_{\omega \beta \vartheta}\\
&+g^{ij}\,v_{0i}^\beta \,u_1^\vartheta \varphi_j^\omega \, R^\alpha_{\omega \beta \vartheta}\\
&+g^{ij}u_0^\sigma u_1^\vartheta \varphi_i^\gamma\, \varphi_j^\omega\,  \Gamma^\beta_{\gamma \sigma} \, R^\alpha_{\omega \beta \vartheta}\end{split}
\end{equation}
from which \eqref{aro-voraus} follows easily. This ends the case $k=4$. The general case can be handled similarly. Indeed, for any fixed value $k \geq4$, we have recursively defined functions $u_i=\left ( u_i^\alpha\right )$ by means of
\begin{equation}\label{Delta-i-tau=u-i}
\bar{\Delta}^{i+1}\tau(\varphi)= \left (\Delta u_{i}^\alpha+A_{i+1}^\alpha \right )\frac{\partial}{\partial y^\alpha}=  u_{i+1}^\alpha\frac{\partial}{\partial y^\alpha}\,, 
\end{equation}
for $0 \leq i \leq k-2$ and also $A_{k-1}^\alpha$ is defined. Then we introduce the vector-valued function
\begin{equation}\label{def-u-general-th-1-1}
u=\left ( \begin{array}{l}
u_0\\
v_0\\
u_1\\
v_1\\
\vdots \\
v_{k-3}\\
u_{k-2}
\end{array} \right )
\end{equation}
where $v_i= \nabla u_i$, $0 \leq i \leq k-3$.  Note that we do not introduce $v_{k-2}$ in \eqref{def-u-general-th-1-1}. Then we define
$$F=F\left (u_0,v_0,\nabla v_0,\ldots, u_{k-3},v_{k-3},\nabla v_{k-3},u_{k-2},\nabla u_{k-2}\right )$$
as follows:
\[
F=\left ( \begin{array}{l}
u_1-A_1\\
d\left (u_1-A_1\right )\\
u_2-A_2\\
d\left (u_2-A_2\right )\\
\vdots\\
d\left (u_{k-2}-A_{k-2}\right )\\
F^k
\end{array} \right ) \,,
\]
with, similarly to \eqref{def-F4}, $\left (F^k\right )^\alpha=-A_{k-1}^\alpha-[\,\ldots\,]^\alpha$. More precisely, here $[\,\ldots\,]$ is the right-hand side of \eqref{tension-2s} or \eqref{tension-2s+1} without the first term. Again, by construction 
$$
\Delta u =F+\textrm{~terms linear in~} v_j,\nabla v_j,\quad  j=0,\ldots k-3
$$ 
and it is easy to see that the first $2(k-2)$ vector components of $F$ can be estimated in such a way that \eqref{aro-voraus} holds. Finally, direct inspection of \eqref{tension-2s} or \eqref{tension-2s+1} (compare with \eqref{def-F4-coor-loc}) shows that also $F^k$ can be estimated so that \eqref{aro-voraus} is verified. So the proof ends by application of Theorem \ref{aro-theorem}. 
\end{proof}

\begin{proof}[Proof of Theorem \ref{main-theorem}]

Let $\varphi, \tilde{\varphi}$ be two $k$-harmonic maps which coincide on an open subset. To simplify the notation, we shall also denote by $\varphi, \tilde{\varphi}$ the vector-valued functions which represent $\varphi, \tilde{\varphi}$ in local charts:
\[
\varphi= \left(\varphi^1,\ldots,\varphi^n \right ) \,; \quad \tilde \varphi= \left(\tilde\varphi^1,\ldots,\tilde\varphi^n \right ) \,.
\]

We define a vector-valued function $u$ for the map \(\varphi\) as follows
\begin{equation}\label{def-u-general}
u=\left ( \begin{array}{l}
\varphi\\
d \varphi \\
u_0\\
v_0\\
u_1\\
v_1\\
\vdots \\
v_{k-3}\\
u_{k-2}
\end{array} \right )\,,
\end{equation}
where $u_0,\ldots,u_{k-2}$ and $v_0,\ldots,v_{k-3}$ are defined as in the proof of Theorem~\ref{Th-harmonic-everywhere}.
We also need to introduce the analogous vector-valued function associated with $\tilde{\varphi}$, i.e.,
\begin{equation}\label{def-u-tilda-general}
\tilde u=\left ( \begin{array}{l}
\tilde \varphi\\
d \tilde\varphi \\
\tilde u_0\\
\tilde v_0\\
\tilde u_1\\
\tilde v_1\\
\vdots \\
\tilde v_{k-3}\\
\tilde u_{k-2}
\end{array} \right )\,.
\end{equation}
Note that, in contrast to the proof of Theorem~\ref{Th-harmonic-everywhere}, the functions \eqref{def-u-general}
and \eqref{def-u-tilda-general} also contain \(\varphi\), $\tilde{\varphi}$ and their first derivatives.

The proof of Theorem \ref{main-theorem} amounts to showing that we can apply Aronszajn's Theorem,
that is Theorem \ref{aro-theorem},
to the vector-valued function 
$$z=u-\tilde u\,,$$
with $u, \,\tilde u$ defined in \eqref{def-u-general} and \eqref{def-u-tilda-general} respectively. 
As in the proof of Theorem 1.1, the following functions are also defined:
\[
\begin{array}{l}
A_i= \left (A_i ^\alpha\right )\,, \quad \tilde A_i= \left (\tilde A_i ^\alpha\right )\,,\quad (1 \leq i \leq k-2) \\
\left (F^k\right )^\alpha=-A_{k-1}^\alpha-[\,\ldots\,]^\alpha\,, \quad
\left (\tilde F^k\right )^\alpha=-\tilde A_{k-1}^\alpha-[\,\tilde{\ldots}\,]^\alpha\,. 
\end{array} 
\] 
Next, we define $G$ as follows:
\begin{equation}\label{def-G}
G=\left ( \begin{array}{l}
\Delta \left ( \varphi- \tilde \varphi \right )\\
\Delta \left ( d\varphi- d\tilde \varphi \right )\\
\left (u_1-A_1\right )-\left (\tilde u_1- \tilde A_1\right )\\
d\left (u_1-A_1\right )- d\left (\tilde u_1- \tilde A_1\right )\\
\left (u_2-A_2\right )-\left (\tilde u_2- \tilde A_2\right )\\
d\left (u_2-A_2\right )- d\left (\tilde u_2- \tilde A_2\right )\\
\vdots\\
d\left (u_{k-2}-A_{k-2}\right )-d\left (\tilde u_{k-2}- \tilde A_{k-2}\right )\\
F^k -\tilde F ^k
\end{array} \right ) 
\end{equation}
Now, by construction, we have
\[
\Delta z = G+\textrm{~terms linear in~} (v_j-\tilde v_j) ,(\nabla v_j-\nabla \tilde v_j), \quad j=0,\ldots k-3.
\]

Note that there are three different blocks in the definition of \(G\).
The first two rows only contain the Laplacian applied to \(\varphi\) and its first partial derivatives,
and we will explain in more detail below how one should think of \(d\varphi\) in the the definition of \(G\).
After that, we always have pairs of \(u_j-A_j\) and its first derivatives, with \(1\leq j\leq k-2\).
In the last row we have the right hand side of the polyharmonic map equation \eqref{tension-2s}, \eqref{tension-2s+1},
denoted by \(F^k\).
Note that this is the only place in \eqref{def-G} where the Euler-Lagrange equation for polyharmonic maps
enters.

Our aim is now to apply the theorem of Aronszajn, that is Theorem \ref{aro-theorem}, to \(z\).
Hence, we have to estimate \(\Delta z\) (equivalently, $G$) in terms of \(z\) and its first partial derivatives.

In the following \(C\) will always represent a positive constant whose value may change from line to line.

To estimate the first row in \eqref{def-G} we use the following 
\begin{Lem}
Let \(\varphi,\tilde\varphi\) be two maps with corresponding variables \eqref{def-u-general} and \eqref{def-u-tilda-general}.
Then the following estimate holds
\begin{align}
\label{estimate-Delta-varphi}
|\Delta (\varphi-\tilde\varphi)|\leq C(|\varphi-\tilde\varphi|+|d\varphi-d\tilde\varphi|+|u_0-\tilde u_0|).
\end{align}
\end{Lem}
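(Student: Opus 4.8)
The plan is to read the estimate off directly from the local expression \eqref{tension-local} of the tension field, using that $u_0=\tau(\varphi)$ and $\tilde u_0=\tau(\tilde\varphi)$. Indeed \eqref{tension-local} can be rewritten as
\[
\Delta\varphi^\alpha=-u_0^\alpha+g^{ij}\,\Gamma^\alpha_{\vartheta\beta}(\varphi)\,\varphi_i^\vartheta\,\varphi_j^\beta\,,
\qquad
\Delta\tilde\varphi^\alpha=-\tilde u_0^\alpha+g^{ij}\,\Gamma^\alpha_{\vartheta\beta}(\tilde\varphi)\,\tilde\varphi_i^\vartheta\,\tilde\varphi_j^\beta\,,
\]
so that $\Delta(\varphi^\alpha-\tilde\varphi^\alpha)=-(u_0^\alpha-\tilde u_0^\alpha)+g^{ij}\big[\Gamma^\alpha_{\vartheta\beta}(\varphi)\varphi_i^\vartheta\varphi_j^\beta-\Gamma^\alpha_{\vartheta\beta}(\tilde\varphi)\tilde\varphi_i^\vartheta\tilde\varphi_j^\beta\big]$. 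The term $-(u_0^\alpha-\tilde u_0^\alpha)$ is already of the desired form, so the whole issue is to bound the difference of the two quadratic Christoffel-type expressions by a multiple of $|\varphi-\tilde\varphi|+|d\varphi-d\tilde\varphi|$.

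The key step is a standard telescoping:
\[
\Gamma^\alpha_{\vartheta\beta}(\varphi)\varphi_i^\vartheta\varphi_j^\beta-\Gamma^\alpha_{\vartheta\beta}(\tilde\varphi)\tilde\varphi_i^\vartheta\tilde\varphi_j^\beta
=\big(\Gamma^\alpha_{\vartheta\beta}(\varphi)-\Gamma^\alpha_{\vartheta\beta}(\tilde\varphi)\big)\varphi_i^\vartheta\varphi_j^\beta
+\Gamma^\alpha_{\vartheta\beta}(\tilde\varphi)(\varphi_i^\vartheta-\tilde\varphi_i^\vartheta)\varphi_j^\beta
+\Gamma^\alpha_{\vartheta\beta}(\tilde\varphi)\tilde\varphi_i^\vartheta(\varphi_j^\beta-\tilde\varphi_j^\beta)\,.
\]
As in the proof of Theorem \ref{Th-harmonic-everywhere}, one works on a fixed open set $D$ whose closure is compact and contained in $U$; then $\overline{\varphi(D)}$ and $\overline{\tilde\varphi(D)}$ are compact subsets of the target chart $V$, on which the smooth functions $\Gamma^\alpha_{\vartheta\beta}$ are bounded and Lipschitz, while $g^{ij}$, $\varphi_i^\vartheta$ and $\tilde\varphi_i^\vartheta$ are bounded on $\overline D$ since $\varphi,\tilde\varphi$ are smooth. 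Hence the first summand is $\leq C|\varphi-\tilde\varphi|$ and the remaining two are $\leq C|d\varphi-d\tilde\varphi|$; contracting with the bounded coefficients $g^{ij}$ and summing over the indices gives \eqref{estimate-Delta-varphi}.

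There is no serious obstacle here: the argument is purely a Lipschitz/telescoping estimate, and the only point deserving attention is the implicit localization to the relatively compact set $D$, on which all metric coefficients, Christoffel symbols, and the first-order data $d\varphi,d\tilde\varphi$ are bounded — precisely the same reduction already used throughout the proof of Theorem \ref{Th-harmonic-everywhere}, so the constant $C$ in \eqref{estimate-Delta-varphi} depends only on $D$, the charts, and the maps $\varphi,\tilde\varphi$.
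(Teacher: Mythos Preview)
Your proof is correct and is essentially the same as the paper's: both start from the local expression \(\Delta\varphi^\alpha=-u_0^\alpha+g^{ij}\Gamma^\alpha_{\vartheta\beta}(\varphi)\varphi_i^\vartheta\varphi_j^\beta\), telescope the quadratic Christoffel term, and estimate the difference \(\Gamma^\alpha_{\vartheta\beta}(\varphi)-\Gamma^\alpha_{\vartheta\beta}(\tilde\varphi)\) via the mean-value inequality (what you call the Lipschitz bound) on a relatively compact set. The only cosmetic difference is the order of the telescoping; the paper splits off the \(d\varphi-d\tilde\varphi\) factors first and the Christoffel difference last, whereas you do the opposite.
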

\begin{proof}
Recall that 
\begin{align*}
\Delta\varphi^\alpha=-u_0^\alpha+\langle d\varphi^\beta,d\varphi^\gamma\rangle\Gamma^\alpha_{\beta\gamma}\,,
\end{align*}
where, here and below, we have shortened the notation denoting
\[
\langle d\varphi^\beta,d\varphi^\gamma\rangle=g^{ij}\, \frac{\partial\varphi^\beta}{\partial x^i} \frac{\partial\varphi^\gamma}{\partial x^j}\,.
\]
We can rewrite 
\begin{align*}
\Delta(\varphi^\alpha-\tilde\varphi^\alpha)=&-u_0^\alpha+\tilde u_0^\alpha+\langle d\varphi^\beta,d\varphi^\gamma\rangle\Gamma^\alpha_{\beta\gamma}(\varphi)
-\langle d\tilde\varphi^\beta,d\tilde\varphi^\gamma\rangle\Gamma^\alpha_{\beta\gamma}(\tilde\varphi) \\
=&-u_0^\alpha+\tilde u_0^\alpha \\
&+\langle d\varphi^\beta-d\tilde\varphi^\beta,d\varphi^\gamma\rangle\Gamma^\alpha_{\beta\gamma}(\varphi)
+\langle d\tilde\varphi^\beta,d\varphi^\gamma-d\tilde\varphi^\gamma\rangle\Gamma^\alpha_{\beta\gamma}(\varphi) \\
&+\langle d\tilde\varphi^\beta,d\tilde\varphi^\gamma\rangle\big(\Gamma^\alpha_{\beta\gamma}(\varphi)-\Gamma^\alpha_{\beta\gamma}(\tilde\varphi)\big).
\end{align*}
The first three terms on the right hand side can be estimated directly.
To estimate the difference of the Christoffel symbols we make use of the mean-value inequality,
for more details we refer to the discussion before Lemma 2.6 in \cite{MR3990379}.
The proof is now complete.
\end{proof}

In the following we will often apply the mean-value inequality without explicitly mentioning it.

As a second step, we estimate the second line of \eqref{def-G}.
\begin{Lem}
\label{lemma-estimate-differential}
Let \(\varphi,\tilde\varphi\) be two maps with corresponding variables \eqref{def-u-general} and \eqref{def-u-tilda-general}.
Then the following estimate holds
\begin{align}
\label{estimate-differential}
|\Delta (d\varphi-\tilde d\varphi)|\leq C\big(|\varphi-\tilde\varphi|+|d\varphi-d\tilde\varphi|+|\nabla d\varphi-\nabla d\tilde\varphi|+|v_0-\tilde v_0|\big).
\end{align}
\end{Lem}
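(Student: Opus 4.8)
The plan is to reduce \eqref{estimate-differential} to the coordinate computation already performed in \eqref{Deltav0}, applied now with $\varphi^\alpha$ playing the role of $u_0^\alpha$, and then to compare $\varphi$ with $\tilde\varphi$ term by term exactly as in the previous lemma. First I would recall the local identity $\Delta\varphi^\alpha=-u_0^\alpha+\langle d\varphi^\beta,d\varphi^\gamma\rangle\,\Gamma^\alpha_{\beta\gamma}(\varphi)$ used in the proof of the preceding lemma. Since the components of $d\varphi$ are the functions $\varphi_i^\alpha=\partial\varphi^\alpha/\partial x^i$, I must estimate $\Delta(\partial\varphi^\alpha/\partial x^i)$; commuting $\Delta$ with $\partial/\partial x^i$ exactly as in \eqref{Deltav0}, the third-order derivatives of $\varphi$ cancel and one is left with
\[
\Delta\Big(\frac{\partial\varphi^\alpha}{\partial x^i}\Big)=\frac{\partial(\Delta\varphi^\alpha)}{\partial x^i}+\frac{\partial g^{kj}}{\partial x^i}\frac{\partial^2\varphi^\alpha}{\partial x^k\partial x^j}-\Big(\frac{\partial g^{\ell j}}{\partial x^i}\Gamma^k_{\ell j}+g^{\ell j}\frac{\partial\Gamma^k_{\ell j}}{\partial x^i}\Big)\frac{\partial\varphi^\alpha}{\partial x^k}\,,
\]
where $\Gamma^k_{\ell j}$ are the Christoffel symbols of $M$. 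Substituting $\partial(\Delta\varphi^\alpha)/\partial x^i=-\partial u_0^\alpha/\partial x^i+\partial_i\big(\langle d\varphi^\beta,d\varphi^\gamma\rangle\,\Gamma^\alpha_{\beta\gamma}(\varphi)\big)$ and expanding the last derivative by the Leibniz rule produces, besides $-v_{0i}^\alpha$ (recall $\partial u_0^\alpha/\partial x^i=v_{0i}^\alpha$, which is a component of $z$) and terms with $\partial g^{kj}/\partial x^i$ and with $(\partial\Gamma^\alpha_{\beta\gamma}/\partial y^\delta)\,\varphi_i^\delta$, further second-order terms $\partial^2\varphi^\beta/\partial x^i\partial x^k$. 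These, together with the $\partial^2\varphi^\alpha/\partial x^k\partial x^j$ already present, I would rewrite through the coordinate expression of the second fundamental form, $\partial^2\varphi^\alpha/\partial x^k\partial x^j=(\nabla d\varphi)^\alpha_{kj}+\Gamma^\ell_{kj}\,\partial\varphi^\alpha/\partial x^\ell-\Gamma^\alpha_{\beta\gamma}(\varphi)\,\partial\varphi^\beta/\partial x^k\,\partial\varphi^\gamma/\partial x^j$, so that every second derivative of $\varphi$ enters only as a component of $\nabla d\varphi$, modulo terms polynomial of degree $\leq 3$ in the $\varphi_i^\alpha$ with smooth coefficients.

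The outcome is that $\Delta(\partial\varphi^\alpha/\partial x^i)$ is a finite sum of terms, each a smooth function on the chart (a polynomial in $g^{ij}$, in the Christoffel symbols of $M$ and $N$, in $S^\alpha_{\beta\omega\vartheta}$ and in their first derivatives, evaluated at the point and at $\varphi$) multiplied by either $v_{0i}^\alpha$, a component of $\nabla d\varphi$, or a monomial of degree $\leq 3$ in the $\varphi_i^\alpha$. Writing the analogous expression for $\tilde\varphi$ and subtracting, I would estimate the difference by the familiar add-and-subtract device — in a product the difference is a sum of terms in which exactly one factor is replaced by the corresponding difference — applying the mean-value inequality to the coefficients $\Gamma^\alpha_{\beta\gamma}$, $\partial\Gamma^\alpha_{\beta\gamma}/\partial y^\delta$, $S^\alpha_{\beta\omega\vartheta}$ evaluated at $\varphi$ versus $\tilde\varphi$, exactly as in the proof of the previous lemma and in \cite{MR3990379}. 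Since $D$ has compact closure contained in the chart, all these smooth coefficients and all the first derivatives of $\varphi$ and $\tilde\varphi$ are bounded on $D$; hence each term of the difference is bounded by $C$ times one of $|\varphi-\tilde\varphi|$, $|d\varphi-d\tilde\varphi|$, $|\nabla d\varphi-\nabla d\tilde\varphi|$, $|v_0-\tilde v_0|$, and summing over $\alpha,i$ yields \eqref{estimate-differential}.

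I expect the only real difficulty to be bookkeeping: one must check that all third-order derivatives of $\varphi$ genuinely cancel in the commutation step, that the surviving second-order derivatives are always absorbed into $\nabla d\varphi-\nabla d\tilde\varphi$ (and never left loose), and that the cubic terms in the $\varphi_i^\alpha$ arising from differentiating $\Gamma^\alpha_{\beta\gamma}(\varphi)$ telescope into $|d\varphi-d\tilde\varphi|$ and $|\varphi-\tilde\varphi|$ rather than into something quadratic in the differences. Conceptually this is the same mechanism already used for \eqref{Deltav0} and in \cite{MR3990379}, so no new idea is needed.
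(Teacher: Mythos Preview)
Your proposal is correct and follows essentially the same route as the paper: commute $\Delta$ with $\partial/\partial x^i$ via \eqref{Deltav0}, substitute $\Delta\varphi^\alpha=-u_0^\alpha+\langle d\varphi^\beta,d\varphi^\gamma\rangle\Gamma^\alpha_{\beta\gamma}$, then estimate the difference by the add-and-subtract device and the mean-value inequality. The only minor deviation is your extra step of rewriting $\partial^2\varphi^\alpha/\partial x^k\partial x^j$ via the full second fundamental form (bringing in target Christoffel symbols); the paper instead works directly with $\nabla_i d\varphi^\beta$ as the domain Hessian of the scalar function $\varphi^\beta$, which avoids introducing and then reabsorbing those extra $\Gamma^\alpha_{\beta\gamma}(\varphi)$ terms --- a slight economy, but not a different idea.
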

\begin{proof}
Recall that we use \(d\varphi,d\tilde\varphi\) in \eqref{def-G} to represent the partial derivatives of \(\varphi\) and \(\tilde\varphi\).
Hence, when we apply the Laplacian to the second line in \eqref{def-G} we will get some correction terms as 
already computed in \eqref{Deltav0}, see also Remark \ref{remark-differential}.

Now, let us consider our two maps \(\varphi,\tilde\varphi\): by combining \eqref{Deltav0} and using that \(\Delta\varphi^\alpha=-u_0^\alpha+\langle d\varphi^\beta,d\varphi^\gamma\rangle\Gamma^\alpha_{\beta\gamma}\) we find
\begin{align*}
\Delta(\varphi^\alpha_i-\tilde\varphi^\alpha_i)=&
-v^\alpha_{0i}+\tilde v^\alpha_{0i}
+2\langle\nabla_i d\varphi^\beta,d\varphi^\gamma\rangle\Gamma^\alpha_{\beta\gamma}(\varphi)
-2\langle\nabla_i d\tilde\varphi^\beta,d\tilde\varphi^\gamma\rangle\Gamma^\alpha_{\beta\gamma}(\tilde\varphi)\\
&+\langle d\varphi^\beta,d\varphi^\gamma\rangle\frac{\partial\Gamma^\alpha_{\beta\gamma}(\varphi)}{\partial y^\delta}\varphi^\delta_i
-\langle d\tilde\varphi^\beta,d\tilde\varphi^\gamma\rangle\frac{\partial\Gamma^\alpha_{\beta\gamma}(\tilde\varphi)}{\partial y^\delta}
\tilde\varphi^\delta_i\\
&+\frac{\partial g^{k\ell}}{\partial x^i}\big(\frac{\partial\varphi_k^\alpha}{\partial x^\ell}-\frac{\partial\tilde\varphi_k^\alpha}{\partial x^\ell}\big)
-\big(\frac{\partial g^{k\ell}}{\partial x^i}\Gamma^j_{k\ell}
+g^{k\ell}\frac{\partial\Gamma^j_{k\ell}}{\partial x^i}\big)\big(\varphi^\alpha_j-\tilde\varphi^\alpha_j\big)\,,
\end{align*}
where in the last line, for clarity, we have also added the linear terms.
In order to estimate the second term on the right hand side we rewrite
\begin{align*}
\langle\nabla_i d\varphi^\beta,d\varphi^\gamma\rangle\Gamma^\alpha_{\beta\gamma}(\varphi)&
-\langle\nabla_i d\tilde\varphi^\beta,d\tilde\varphi^\gamma\rangle\Gamma^\alpha_{\beta\gamma}(\tilde\varphi) \\
=&\langle\nabla_i d\varphi^\beta-\nabla_i d\tilde\varphi^\beta,d\varphi^\gamma\rangle\Gamma^\alpha_{\beta\gamma}(\varphi)
+\langle\nabla_i d\tilde\varphi^\beta,d\varphi^\gamma-d\tilde\varphi^\gamma\rangle\Gamma^\alpha_{\beta\gamma}(\varphi) \\
&+\langle\nabla_i d\tilde\varphi^\beta,d\tilde\varphi^\gamma\rangle
\big(\Gamma^\alpha_{\beta\gamma}(\varphi)-\Gamma^\alpha_{\beta\gamma}(\tilde\varphi)\big).
\end{align*}
Then, it is easy to estimate
\begin{align*}
|\langle\nabla_i d\varphi^\beta,d\varphi^\gamma\rangle\Gamma^\alpha_{\beta\gamma}(\varphi)
-\langle\nabla_i d\tilde\varphi^\beta,d\tilde\varphi^\gamma\rangle\Gamma^\alpha_{\beta\gamma}(\tilde\varphi)|
\leq C(|\varphi-\tilde\varphi|+|d\varphi-d\tilde\varphi|+|\nabla d\varphi-\nabla d\tilde\varphi|).
\end{align*}
Again, we rewrite
\begin{align*}
\langle d\varphi^\beta,d\varphi^\gamma\rangle&\frac{\partial\Gamma^\alpha_{\beta\gamma}(\varphi)}{\partial y^\delta}\varphi^\delta_i
-\langle d\tilde\varphi^\beta,d\tilde\varphi^\gamma\rangle\frac{\partial\Gamma^\alpha_{\beta\gamma}(\tilde\varphi)}{\partial y^\delta}\tilde\varphi^\delta_i \\
=&\langle d\varphi^\beta-d\tilde\varphi^\beta,d\varphi^\gamma\rangle\frac{\partial\Gamma^\alpha_{\beta\gamma}(\varphi)}{\partial y^\delta}\varphi^\delta_i
+\langle d\tilde\varphi^\beta,d\varphi^\gamma-d\tilde\varphi^\gamma\rangle\frac{\partial\Gamma^\alpha_{\beta\gamma}(\varphi)}{\partial y^\delta}\varphi^\delta_i\\
&+\langle d\tilde\varphi^\beta,d\tilde\varphi^\gamma\rangle
\big(\frac{\partial\Gamma^\alpha_{\beta\gamma}(\varphi)}{\partial y^\delta}-\frac{\partial\Gamma^\alpha_{\beta\gamma}(\tilde\varphi)}{\partial y^\delta}\big)\varphi^\delta_i
+\langle d\tilde\varphi^\beta,d\tilde\varphi^\gamma\rangle\frac{\partial\Gamma^\alpha_{\beta\gamma}(\tilde\varphi)}{\partial y^\delta}
(\varphi^\delta_i-\tilde\varphi^\delta_i).
\end{align*}
We deduce the estimate
\begin{align*}
|\langle d\varphi^\beta,d\varphi^\gamma\rangle\frac{\partial\Gamma^\alpha_{\beta\gamma}(\varphi)}{\partial y^\delta}\varphi^\delta_i
-\langle d\tilde\varphi^\beta,d\tilde\varphi^\gamma\rangle\frac{\partial\Gamma^\alpha_{\beta\gamma}(\tilde\varphi)}{\partial y^\delta}\tilde\varphi^\delta_i|
\leq C(|\varphi-\tilde\varphi|+|d\varphi-d\tilde\varphi|).
\end{align*}
The claim now follows by combining the equations.
\end{proof}
In the following two lemmata we will estimate the pairs \(u_{j+1}-A_{j+1}\) and their derivatives which are in the middle block of \eqref{def-G}.

\begin{Lem}
Let \(\varphi,\tilde\varphi\) be two maps with corresponding variables \eqref{def-u-general} and \eqref{def-u-tilda-general}. Assume that \(0 \leq j\leq k-3\).
Then the following estimate holds
\begin{align}
\label{estimate-A}
\big|\big(u_{j+1}&-A_{j+1}\big)-\big(\tilde u_{j+1}-\tilde A_{j+1}\big)\big| \\
\nonumber&\leq C(|\varphi-\tilde\varphi|+|d\varphi-d\tilde\varphi|+|u_0-\tilde u_0|+|u_j-\tilde u_j|+|u_{j+1}-\tilde u_{j+1}|+|v_j-\tilde v_j|)\,.
\end{align}
\end{Lem}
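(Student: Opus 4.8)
The plan is to expand $u_{j+1}-A_{j+1}$ explicitly by means of \eqref{def-A} (equivalently \eqref{eq:laplacian-on-sections}) and to reduce the difference $(u_{j+1}-A_{j+1})-(\tilde u_{j+1}-\tilde A_{j+1})$ to the six quantities on the right-hand side of \eqref{estimate-A} by repeated use of the elementary ``add and subtract'' identity together with the mean-value inequality. The key observation is that, although by \eqref{Delta-i-tau=u-i} one has $u_{j+1}-A_{j+1}=\Delta u_j$ as a function, we will \emph{not} use this second-order form: since $u_{j+1}$ is one of the independent variables in \eqref{def-u-general} and $A_{j+1}$ is, by definition, of first order in $u_j$ and its first derivatives, the expression $u_{j+1}-A_{j+1}$ is already zeroth/first order in the variables, which is exactly what Aronszajn's theorem requires.

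First I would recall from \eqref{def-A} that, writing $v_{ji}^\vartheta:=\partial u_j^\vartheta/\partial x^i$ for the components of $v_j=\nabla u_j$ (so that $A_{j+1}^\alpha=A^\alpha(u_j^\vartheta,v_{ji}^\vartheta)$),
\[
A_{j+1}^\alpha=v_{ji}^\vartheta\bigl[-2\,g^{ij}\,\varphi_j^\beta\,\Gamma_{\beta\vartheta}^\alpha(\varphi)\bigr]+u_j^\vartheta\bigl[(\Delta\varphi^\beta)\,\Gamma_{\beta\vartheta}^\alpha(\varphi)-g^{ij}\,\varphi_j^\beta\,\varphi_i^\omega\,S_{\beta\omega\vartheta}^\alpha(\varphi)\bigr],
\]
and the analogous formula for $\tilde A_{j+1}^\alpha$. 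Thus $A_{j+1}$ is linear in the pair $(u_j,v_j)$, with coefficients built out of the fixed metric coefficients $g^{ij}$, the first derivatives $\varphi_i^\beta$, the scalar $\Delta\varphi^\beta$, and the functions $\Gamma_{\beta\vartheta}^\alpha,\,S_{\beta\omega\vartheta}^\alpha$ evaluated along $\varphi$; on the relatively compact set $D$ all of these are bounded.

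Then I would estimate $A_{j+1}-\tilde A_{j+1}$ term by term. For the first term I write
\[
v_{ji}^\vartheta\varphi_j^\beta\Gamma_{\beta\vartheta}^\alpha(\varphi)-\tilde v_{ji}^\vartheta\tilde\varphi_j^\beta\Gamma_{\beta\vartheta}^\alpha(\tilde\varphi)=(v_{ji}^\vartheta-\tilde v_{ji}^\vartheta)\varphi_j^\beta\Gamma_{\beta\vartheta}^\alpha(\varphi)+\tilde v_{ji}^\vartheta(\varphi_j^\beta-\tilde\varphi_j^\beta)\Gamma_{\beta\vartheta}^\alpha(\varphi)+\tilde v_{ji}^\vartheta\tilde\varphi_j^\beta\bigl(\Gamma_{\beta\vartheta}^\alpha(\varphi)-\Gamma_{\beta\vartheta}^\alpha(\tilde\varphi)\bigr),
\]
which, using the mean-value inequality on $\Gamma$ exactly as in the proof of the previous two lemmata, is bounded by $C(|v_j-\tilde v_j|+|d\varphi-d\tilde\varphi|+|\varphi-\tilde\varphi|)$. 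The second term is treated the same way, except that the factor $\Delta\varphi^\beta$ occurs; here I use $\Delta\varphi^\beta=-u_0^\beta+\langle d\varphi^\gamma,d\varphi^\delta\rangle\Gamma_{\gamma\delta}^\beta(\varphi)$ to obtain $|\Delta\varphi^\beta-\Delta\tilde\varphi^\beta|\leq C(|u_0-\tilde u_0|+|d\varphi-d\tilde\varphi|+|\varphi-\tilde\varphi|)$, and then telescope once more so that the $\Gamma$- and $S$-factors contribute only $|d\varphi-d\tilde\varphi|$ and $|\varphi-\tilde\varphi|$. Collecting the pieces gives $|A_{j+1}-\tilde A_{j+1}|\leq C(|\varphi-\tilde\varphi|+|d\varphi-d\tilde\varphi|+|u_0-\tilde u_0|+|u_j-\tilde u_j|+|v_j-\tilde v_j|)$, and adding the trivial bound for $|u_{j+1}-\tilde u_{j+1}|$ yields \eqref{estimate-A}. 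The only point requiring care — more a matter of bookkeeping than a genuine obstacle — is to make sure that the single second-order quantity hidden inside $A_{j+1}$, namely $\Delta\varphi^\beta$, is re-expressed through $u_0$ and $d\varphi$, so that no second derivatives of $\varphi$ or $\tilde\varphi$ survive on the right-hand side, and that only the variables $u_j,\,u_{j+1},\,v_j$ (together with the universally present $\varphi,\,d\varphi,\,u_0$) enter; in particular nothing like $v_{j+1}$ or $\nabla v_j$ appears, which is immediate from the explicit form of $A$ in \eqref{def-A}.
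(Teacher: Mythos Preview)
Your proposal is correct and follows essentially the same approach as the paper: telescope each factor of $A_{j+1}-\tilde A_{j+1}$, apply the mean-value inequality to the Christoffel symbols and to $S$, and use the substitution $\Delta\varphi^\beta=-u_0^\beta+\langle d\varphi^\gamma,d\varphi^\delta\rangle\Gamma^\beta_{\gamma\delta}(\varphi)$ so that no second derivatives survive. The only cosmetic difference is that the paper performs this substitution \emph{before} telescoping, rewriting $A_{j+1}^\alpha=-2\langle du_j^\nu,d\varphi^\beta\rangle\Gamma^\alpha_{\beta\nu}-u_j^\nu u_0^\beta\Gamma^\alpha_{\beta\nu}+u_j^\nu\langle d\varphi^\vartheta,d\varphi^\sigma\rangle C^\alpha_{\vartheta\sigma\nu}$ with $C^\alpha_{\vartheta\sigma\nu}:=\Gamma^\mu_{\vartheta\sigma}\Gamma^\alpha_{\mu\nu}-S^\alpha_{\vartheta\sigma\nu}$, and then estimates each of the three resulting terms separately.
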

\begin{proof}
We know that (this follows from \eqref{def-A} and using \(\Delta\varphi^\alpha=-u_0^\alpha+\langle d\varphi^\beta,d\varphi^\gamma\rangle\Gamma^\alpha_{\beta\gamma}\))
\begin{align*}
A_{j+1}^\alpha=&
-2\langle du_j^\nu,d\varphi^\beta\rangle\Gamma^\alpha_{\beta\nu}
-u_j^\nu u_0^\beta\Gamma_{\beta\nu}^\alpha
+u_j^\nu\langle d\varphi^\vartheta,d\varphi^\sigma\rangle C_{\vartheta\sigma\nu}^\alpha,
\end{align*}
where 

\begin{align*}
C_{\vartheta\sigma\nu}^\alpha:=
\Gamma^\mu_{\vartheta\sigma}\Gamma^\alpha_{\mu\nu}
-S^\alpha_{\vartheta\sigma\nu},
\end{align*}
with $S$ defined in \eqref{eq:definitionofS}.
Hence, for two maps \(\varphi,\tilde\varphi\) we get
\begin{align*}
A_{j+1}^\alpha-\tilde A_{j+1}^\alpha=&
-2\langle du_j^\nu,d\varphi^\beta\rangle\Gamma^\alpha_{\beta\nu}(\varphi)
+2\langle d\tilde u_j^\nu,d\tilde\varphi^\beta\rangle\Gamma^\alpha_{\beta\nu}(\tilde\varphi) \\
&-u_j^\nu u_0^\beta\Gamma_{\beta\nu}^\alpha(\varphi)
+\tilde u_j^\nu \tilde u_0^\beta\Gamma_{\beta\nu}^\alpha(\tilde\varphi)\\
&+u_j^\nu\langle d\varphi^\vartheta,d\varphi^\sigma\rangle C_{\vartheta\sigma\nu}^\alpha(\varphi)
-\tilde u_j^\nu\langle d\tilde\varphi^\vartheta,d\tilde\varphi^\sigma\rangle C_{\vartheta\sigma\nu}^\alpha(\tilde\varphi).
\end{align*}
Now, we rewrite
\begin{align*}
-\langle du_j^\nu&,d\varphi^\beta\rangle\Gamma^\alpha_{\beta\nu}(\varphi)
+\langle d\tilde u_j^\nu,d\tilde\varphi^\beta\rangle\Gamma^\alpha_{\beta\nu}(\tilde\varphi) \\
=&-\langle du_j^\nu-d\tilde u_j^\nu,d\varphi^\beta\rangle\Gamma^\alpha_{\beta\nu}(\varphi)
-\langle d\tilde u_j^\nu,d\varphi^\beta-d\tilde\varphi^\beta\rangle\Gamma^\alpha_{\beta\nu}(\varphi)
-\langle d\tilde u_j^\nu,d\tilde\varphi^\beta\rangle
\big(\Gamma^\alpha_{\beta\nu}(\varphi)-\Gamma^\alpha_{\beta\nu}(\tilde\varphi)\big).
\end{align*}
This gives the estimate
\begin{align*}
|\langle du_j^\nu,d\varphi^\beta\rangle\Gamma^\alpha_{\beta\nu}(\varphi)
-\langle d\tilde u_j^\nu,d\tilde\varphi^\beta\rangle\Gamma^\alpha_{\beta\nu}(\tilde\varphi)| 
\leq C(|\varphi-\tilde\varphi|+|d\varphi-d\tilde\varphi|+|du_j-d\tilde u_j|).
\end{align*}
Again, we rewrite
\begin{align*}
-u_j^\nu u_0^\beta\Gamma_{\beta\nu}^\alpha(\varphi)
+\tilde u_j^\nu \tilde u_0^\beta\Gamma_{\beta\nu}^\alpha(\tilde\varphi)
=&-(u_j^\nu-\tilde u_j^\nu) u_0^\beta\Gamma_{\beta\nu}^\alpha(\varphi)
-\tilde u_j^\nu (u_0^\beta-\tilde u_0^\beta)\Gamma_{\beta\nu}^\alpha(\varphi) \\
&-\tilde u_j^\nu\tilde u_0^\beta\big(\Gamma_{\beta\nu}^\alpha(\varphi)-\Gamma_{\beta\nu}^\alpha(\tilde\varphi)\big)
\end{align*}
and estimate
\begin{align*}
|u_j^\nu u_0^\beta\Gamma_{\beta\nu}^\alpha(\varphi)
-\tilde u_j^\nu \tilde u_0^\beta\Gamma_{\beta\nu}^\alpha(\tilde\varphi)|
\leq C(|\varphi-\tilde\varphi|+|u_0-\tilde u_0|+|u_j-\tilde u_j|).
\end{align*}
Finally, we rewrite
\begin{align*}
u_j^\nu\langle d\varphi^\vartheta,&d\varphi^\sigma\rangle C_{\vartheta\sigma\nu}^\alpha(\varphi)
-\tilde u_j^\nu\langle d\tilde\varphi^\vartheta,d\tilde\varphi^\sigma\rangle C_{\vartheta\sigma\nu}^\alpha(\tilde\varphi)\\
=&(u_j^\nu-\tilde u_j^\nu)\langle d\varphi^\vartheta,d\varphi^\sigma\rangle C_{\vartheta\sigma\nu}^\alpha(\varphi)
+\tilde u_j^\nu\langle d\varphi^\vartheta-d\tilde\varphi^\vartheta,d\varphi^\sigma\rangle C_{\vartheta\sigma\nu}^\alpha(\varphi) \\
&+\tilde u_j^\nu\langle d\tilde\varphi^\vartheta,d\varphi^\sigma-d\tilde\varphi^\sigma\rangle C_{\vartheta\sigma\nu}^\alpha(\varphi)
+\tilde u_j^\nu\langle d\tilde\varphi^\vartheta,d\tilde\varphi^\sigma\rangle \big(C_{\vartheta\sigma\nu}^\alpha(\varphi)- C_{\vartheta\sigma\nu}^\alpha(\tilde\varphi)\big).
\end{align*}
Hence, we find the estimate
\begin{align*}
|u_j^\nu\langle d\varphi^\vartheta,&d\varphi^\sigma\rangle C_{\vartheta\sigma\nu}^\alpha(\varphi)
-\tilde u_j^\nu\langle d\tilde\varphi^\vartheta,d\tilde\varphi^\sigma\rangle C_{\vartheta\sigma\nu}^\alpha(\tilde\varphi)|
\leq C(|\varphi-\tilde\varphi|+|d\varphi- d\tilde\varphi|+|u_j-\tilde u_j|).
\end{align*}
This completes the proof.
\end{proof}

\begin{Lem}
Let \(\varphi,\tilde\varphi\) be two maps with corresponding variables \eqref{def-u-general} and \eqref{def-u-tilda-general}. Assume that \(0 \leq j\leq k-3\).
Then the following estimate holds
\begin{align}
\label{estimate-dA}
\big|d\big(u_{j+1}&-A_{j+1}\big)-d\big(\tilde u_{j+1}-\tilde A_{j+1}\big)\big| \\
\nonumber\leq & 
C\big(|\varphi-\tilde\varphi|+|d\varphi-\tilde d\varphi|+|\nabla d\varphi-\nabla d\tilde\varphi| 
+|u_0-\tilde u_0|+|v_0-\tilde v_0|+|u_j-\tilde u_j| \\
\nonumber&+|v_j-\tilde v_j|+|\nabla v_j-\nabla\tilde v_j|+|u_{j+1}-\tilde u_{j+1}|\big)\,.
\end{align}
\end{Lem}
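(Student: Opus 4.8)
The plan is to prove \eqref{estimate-dA} in the same spirit as \eqref{Deltav0} and Lemma \ref{lemma-estimate-differential}, one level higher in the recursion \eqref{Delta-i-tau=u-i}: I differentiate, with respect to the domain coordinates, the explicit local expression for $A_{j+1}^\alpha$ already recorded in the proof of the preceding lemma, and then estimate the resulting difference term by term.

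First I recall that, after eliminating $\Delta\varphi^\alpha=-u_0^\alpha+\langle d\varphi^\beta,d\varphi^\gamma\rangle\Gamma^\alpha_{\beta\gamma}$, the quantity $A_{j+1}^\alpha$ is purely algebraic,
\[
A_{j+1}^\alpha=-2\langle du_j^\nu,d\varphi^\beta\rangle\Gamma^\alpha_{\beta\nu}(\varphi)-u_j^\nu u_0^\beta\Gamma_{\beta\nu}^\alpha(\varphi)+u_j^\nu\langle d\varphi^\vartheta,d\varphi^\sigma\rangle C_{\vartheta\sigma\nu}^\alpha(\varphi),
\]
with $C$ built from $\Gamma$ and $S$ as in \eqref{eq:definitionofS}; thus $A_{j+1}^\alpha$ is a polynomial in the entries of $u_0,u_j,du_j,d\varphi$ whose coefficients are smooth functions of $\varphi$ alone. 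Since $d(u_{j+1}^\alpha-A_{j+1}^\alpha)=\partial u_{j+1}^\alpha/\partial x^p-\partial A_{j+1}^\alpha/\partial x^p$, and since $v_{j+1}=\nabla u_{j+1}$ has components $\partial u_{j+1}^\alpha/\partial x^i$, the term $\partial u_{j+1}^\alpha/\partial x^p$ is a component of $v_{j+1}$ when $j\leq k-4$ and a first partial derivative of the component $u_{k-2}$ when $j=k-3$; in either case its difference is of a form directly admitted on the right-hand side of \eqref{aro-voraus}, so the substantive step is to estimate $\partial A_{j+1}^\alpha/\partial x^p-\partial\tilde A_{j+1}^\alpha/\partial x^p$.

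To do so I apply $\partial/\partial x^p$ to the displayed formula. By the Leibniz and chain rules the derivative lands on $du_j$ (producing $\partial(du_j)/\partial x^p$, equal to $\nabla v_j$ up to a term linear in $v_j$), on $d\varphi$ (producing $\nabla d\varphi$ up to a term linear in $d\varphi$), on $u_0$ or $u_j$ (producing $v_0=du_0$ or $v_j=du_j$), on the metric coefficients of $M$ (bounded, with bounded derivatives), or on one of the $\varphi$-dependent coefficients $\Gamma,C$ (producing $(\partial\Gamma/\partial y^\delta)\,\varphi^\delta_p$, $(\partial^2\Gamma/\partial y^\delta\partial y^\mu)\,\varphi^\delta_p$, and so on). The crucial structural point — the very one that makes \eqref{Deltav0} close up in the proof of Theorem \ref{Th-harmonic-everywhere} — is that, because $A_{j+1}$ contains no Laplacian nor any higher derivative of $\varphi$, one differentiation never manufactures a forbidden quantity such as $\nabla v_{j+1}$ or a third derivative of $\varphi$: $\partial A_{j+1}^\alpha/\partial x^p$ depends only on $\varphi,d\varphi,\nabla d\varphi,u_0,v_0,u_j,v_j,\nabla v_j$ together with coefficients that are bounded on any $D$ whose closure is compact in $U$. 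Forming the difference with the $\tilde\varphi$–version and telescoping each product by repeatedly adding and subtracting intermediate terms — exactly as in the proofs of \eqref{estimate-Delta-varphi}, \eqref{estimate-differential} and \eqref{estimate-A} — I arrange each summand to be a bounded factor times one of $\varphi-\tilde\varphi$, $d\varphi-d\tilde\varphi$, $\nabla d\varphi-\nabla d\tilde\varphi$, $u_0-\tilde u_0$, $v_0-\tilde v_0$, $u_j-\tilde u_j$, $v_j-\tilde v_j$, $\nabla v_j-\nabla\tilde v_j$; for the factors of the form $\Gamma(\varphi)-\Gamma(\tilde\varphi)$, $(\partial\Gamma/\partial y)(\varphi)-(\partial\Gamma/\partial y)(\tilde\varphi)$, $C(\varphi)-C(\tilde\varphi)$ and their first $y$–derivatives, I invoke the mean-value inequality to bound them by $C|\varphi-\tilde\varphi|$, as discussed before Lemma 2.6 in \cite{MR3990379}. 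Collecting the uniform bounds on $D$ into a single constant then yields \eqref{estimate-dA}.

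The only genuine difficulty here is organizational: after one differentiation one must keep track, among the many terms produced, of which variables of \eqref{def-u-general} each involves, and verify that the differentiation stays inside that list — i.e. that it yields $\nabla v_j$ and $\nabla d\varphi$ but nothing of higher order. As explained, this is guaranteed by the Christoffel–polynomial form of $A_{j+1}$ together with the identity $\Delta\varphi^\alpha=-u_0^\alpha+\langle d\varphi^\beta,d\varphi^\gamma\rangle\Gamma^\alpha_{\beta\gamma}$ used to eliminate $\Delta\varphi$. Once \eqref{estimate-dA} is available, together with \eqref{estimate-Delta-varphi}, \eqref{estimate-differential}, \eqref{estimate-A} and the analogous (separately treated) estimate for the last row $F^k-\tilde F^k$, every component of $G$ — hence, after absorbing the correction terms linear in $v_j-\tilde v_j$ and $\nabla v_j-\nabla\tilde v_j$, every component of $\Delta z$ — is bounded by $C\big(\sum_c|z^c|+\sum_{c,i}|\partial z^c/\partial x^i|\big)$. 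Thus Theorem \ref{aro-theorem} applies to $z=u-\tilde u$ on $D$; since $z$ vanishes on the open set where $\varphi=\tilde\varphi$, it vanishes on all of $D$, and the connectedness argument of \cite[Proof of Theorem~1.3]{MR3990379} upgrades this to $\varphi\equiv\tilde\varphi$ on $M$.
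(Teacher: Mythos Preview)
Your argument is correct and follows essentially the same route as the paper: differentiate the explicit Christoffel-polynomial expression for $A_{j+1}^\alpha$ with respect to the domain variables, observe that only $\varphi,d\varphi,\nabla d\varphi,u_0,v_0,u_j,v_j,\nabla v_j$ appear, and then telescope the $\varphi$-/$\tilde\varphi$-difference using the mean-value inequality on the $\Gamma$- and $C$-coefficients. Your additional remark that the leftover piece $du_{j+1}-d\tilde u_{j+1}$ is either a component of $z$ (namely $v_{j+1}-\tilde v_{j+1}$) or a first derivative of $u_{k-2}-\tilde u_{k-2}$, and hence is admissible in the Aronszajn inequality, is a useful clarification that the paper leaves implicit.
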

\begin{proof}
By a direct calculation we find
\begin{align*}
\nabla_i A^\alpha_{j+1}=&-2\langle\nabla_iv_j^\gamma,d\varphi^\beta\rangle\Gamma^\alpha_{\beta\gamma}
-2\langle v_j^\gamma,\nabla_i d\varphi^\beta\rangle\Gamma^\alpha_{\beta\gamma}
-2\langle v_j^\gamma,d\varphi^\beta\rangle\frac{\partial\Gamma^\alpha_{\beta\gamma}}{\partial y^\delta}\varphi^\delta_i \\
&-\nabla_iu_j^\gamma\, u_0^\beta\,\Gamma^\alpha_{\beta\gamma}-u_j^\gamma \,\nabla_iu_0^\beta\,\Gamma^\alpha_{\beta\gamma}
-u_j^\gamma\, u_0^\beta\,\frac{\partial\Gamma^\alpha_{\beta\gamma}}{\partial y^\delta}\varphi^\delta_i \\
&+\nabla_i u^\nu_j\langle d\varphi^\vartheta,d\varphi^\sigma\rangle C^\alpha_{\vartheta\sigma\nu}
+ u^\nu_j\langle\nabla_i d\varphi^\vartheta,d\varphi^\sigma\rangle C^\alpha_{\vartheta\sigma\nu}
+ u^\nu_j\langle d\varphi^\vartheta,\nabla_id\varphi^\sigma\rangle C^\alpha_{\vartheta\sigma\nu} \\
&+ u^\nu_j\langle d\varphi^\vartheta,d\varphi^\sigma\rangle \frac{\partial C^\alpha_{\vartheta\sigma\nu}}{\partial y^\delta}\varphi^\delta_i.
\end{align*}
Again, we have to be careful when applying the Laplacian to \(v_j=\nabla u_j\)
as the Laplacian does not commute with partial derivatives and we get several
extra terms as demonstrated in \eqref{Deltav0}.
However, all these terms on the right hand side can be easily estimated
in terms of \(v_r,\nabla v_r\).

The statement of the lemma can now be derived as in the previous lemmata.
\end{proof}

Finally, we estimate the contribution in \eqref{def-G} originating from the polyharmonic map equation.
We only consider the case of polyharmonic maps of even order with \(2s=k\geq 4\) as the odd case 
follows by exactly the same arguments.
\begin{Lem}

Let \(\varphi\colon M\to N\) be a polyharmonic map of even order with corresponding variables \eqref{def-u-general}.
Then
\begin{align}
\label{definition-Fk-even}
(F^k)^\alpha=&-A^\alpha_{k-1}
-u^\delta_{k-2}\langle d\varphi^\gamma,d\varphi^\beta\rangle R^\alpha_{\beta\gamma\delta} \\
\nonumber&+\sum_{\ell=1}^{\frac{k}{2}-1}\big(u^\delta_{\frac{k}{2}-\ell-1}\langle v^\gamma_{\frac{k}{2}+\ell-2},d\varphi^\beta\rangle R^\alpha_{\beta\gamma\delta}
+u^\vartheta_{\frac{k}{2}+\ell-2}u^\delta_{\frac{k}{2}-\ell-1}\langle d\varphi^\eta,d\varphi^\beta\rangle E^\alpha_{\beta\delta\vartheta\eta} \\
\nonumber &\hspace{1.5cm} +u^\delta_{\frac{k}{2}+\ell-2}\langle v^\gamma_{\frac{k}{2}-\ell-1},d\varphi^\beta\rangle R^\alpha_{\beta\gamma\delta}\big),
\end{align}
where \(E^\alpha_{\beta\delta\vartheta\eta}:=R^\alpha_{\beta\gamma\delta}\Gamma^\gamma_{\vartheta\eta}{+R^\alpha_{\beta\gamma\vartheta}\Gamma^\gamma_{\delta\eta}}\).
\end{Lem}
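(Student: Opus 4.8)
The assertion is a purely computational reformulation, in the local charts $(U,x^i)$ and $(V,y^\alpha)$, of the curvature part of the even-order Euler--Lagrange system \eqref{tension-2s}, so the plan is a direct substitution starting from the definition $(F^k)^\alpha=-A^\alpha_{k-1}-[\,\ldots\,]^\alpha$, in which $[\,\ldots\,]$ denotes the right-hand side of \eqref{tension-2s}, with $k=2s$, after deleting its leading term $\bar\Delta^{2s-1}\tau(\varphi)$. First I would record the two elementary local identities that govern all the recursively defined objects of \eqref{def-u-general}. On one hand, for a section $\sigma=\sigma^\alpha\,\partial/\partial y^\alpha$ of $\varphi^{-1}TN$ one has $(\bar\nabla_{e_i}\sigma)^\alpha=\partial\sigma^\alpha/\partial x^i+\Gamma^\alpha_{\beta\gamma}(\varphi)\,\varphi_i^\beta\,\sigma^\gamma$, which I shall apply with $\sigma=\bar\Delta^j\tau(\varphi)$, splitting the outcome into a \emph{partial-derivative part} and a \emph{Christoffel part}. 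On the other hand, by the recursion \eqref{Delta-i-tau=u-i} we have $\bar\Delta^j\tau(\varphi)=u_j^\alpha\,\partial/\partial y^\alpha$ for $0\le j\le k-2$, whereas $\bar\Delta^{k-1}\tau(\varphi)=(\Delta u_{k-2}^\alpha+A^\alpha_{k-1})\,\partial/\partial y^\alpha$ is exactly the leading term that has been removed. Hence no object of index $k-1$ enters $F^k$, and since the highest iterate still present in $[\,\ldots\,]$ is $\bar\Delta^{2s-2}\tau(\varphi)=u_{k-2}^\alpha\,\partial/\partial y^\alpha$, every iterate and every covariant derivative occurring in $[\,\ldots\,]$ is expressed through the admissible variables $u_0,\dots,u_{k-2}$ and $v_0,\dots,v_{k-3}$, together with $\varphi$ and $d\varphi$.

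I would then substitute these expressions into the three kinds of curvature terms of \eqref{tension-2s}, trace against $g^{ij}$, use the identification $d\varphi(e_i)\leftrightarrow\varphi_i^\alpha$ and the index convention \eqref{Convention-R}, and abbreviate $\langle d\varphi^\gamma,d\varphi^\beta\rangle=g^{ij}\varphi_i^\gamma\varphi_j^\beta$ as in the preceding lemmata. The isolated term $-R^N(\bar\Delta^{2s-2}\tau(\varphi),d\varphi(e_j))d\varphi(e_j)$ of \eqref{tension-2s} contributes, after the sign flip coming from $(F^k)^\alpha=-A^\alpha_{k-1}-[\,\ldots\,]^\alpha$ and one use of the antisymmetry of $R$ to match the convention \eqref{Convention-R}, precisely the term $-u_{k-2}^\delta\langle d\varphi^\gamma,d\varphi^\beta\rangle R^\alpha_{\beta\gamma\delta}$; this is the same bookkeeping already carried out for the corresponding term in the case $k=3$, compare with \eqref{F3}. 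For each fixed $\ell\in\{1,\dots,s-1\}$ I would plug the splittings of $\bar\nabla_{e_j}\bar\Delta^{s+\ell-2}\tau(\varphi)$ and $\bar\nabla_{e_j}\bar\Delta^{s-\ell-1}\tau(\varphi)$ into the pair $R^N(\bar\nabla_{e_j}\bar\Delta^{s+\ell-2}\tau(\varphi),\bar\Delta^{s-\ell-1}\tau(\varphi))d\varphi(e_j)-R^N(\bar\Delta^{s+\ell-2}\tau(\varphi),\bar\nabla_{e_j}\bar\Delta^{s-\ell-1}\tau(\varphi))d\varphi(e_j)$. The two partial-derivative parts reproduce the two $R$-terms of \eqref{definition-Fk-even} carrying $v^\gamma_{s+\ell-2}$ and $v^\gamma_{s-\ell-1}$, while the two Christoffel parts, which enter with relative signs that become equal once the antisymmetry $R^N(X,Y)=-R^N(Y,X)$ has been invoked, collapse into the single term $u^\vartheta_{s+\ell-2}u^\delta_{s-\ell-1}\langle d\varphi^\eta,d\varphi^\beta\rangle E^\alpha_{\beta\delta\vartheta\eta}$ with $E^\alpha_{\beta\delta\vartheta\eta}=R^\alpha_{\beta\gamma\delta}\Gamma^\gamma_{\vartheta\eta}+R^\alpha_{\beta\gamma\vartheta}\Gamma^\gamma_{\delta\eta}$, the symmetry $\Gamma^\gamma_{ab}=\Gamma^\gamma_{ba}$ being used in the relabellings. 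Adding the $-A^\alpha_{k-1}$ produced by \eqref{Delta-i-tau=u-i} then gives \eqref{definition-Fk-even}; the odd-order analogue follows by the same substitution applied to \eqref{tension-2s+1}, the single extra summand $-R^N(\bar\nabla_{e_j}\bar\Delta^{s-1}\tau(\varphi),\bar\Delta^{s-1}\tau(\varphi))d\varphi(e_j)$ being treated in identical fashion.

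The only genuinely delicate point, and the one where I expect the computation to be error-prone, is the sign and index bookkeeping: one has to reconcile the antisymmetry $R^N(X,Y)=-R^N(Y,X)$ with the convention \eqref{Convention-R}, which places the two vector-field indices of $R^N(\cdot,\cdot)$ in the last two lower slots of $R^\alpha_{\bullet\bullet\bullet}$ rather than in the order in which the arguments of $R^N$ appear in \eqref{tension-2s}, and then to check that, uniformly in $\ell$, the two $\bar\nabla$-summands --- which carry opposite signs --- assemble into the \emph{symmetric} combination defining $E^\alpha_{\beta\delta\vartheta\eta}$ and not an antisymmetric one, including the borderline values of $\ell$ for which $s-\ell-1=0$, so that $\bar\Delta^{s-\ell-1}\tau(\varphi)$ is simply $\tau(\varphi)=u_0$. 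A convenient consistency check is to specialize $k=4$, $s=2$, $\ell=1$, so that $s+\ell-2=1$ and $s-\ell-1=0$: then \eqref{definition-Fk-even} must reduce to the already verified expression \eqref{def-F4-coor-loc}, and matching the two pins down both the placement of indices in $E^\alpha_{\beta\delta\vartheta\eta}$ and the overall signs.
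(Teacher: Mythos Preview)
Your proposal is correct and follows essentially the same approach as the paper: a direct substitution of the local expressions $\bar\Delta^j\tau(\varphi)=u_j^\alpha\partial_\alpha$ and $(\bar\nabla_{\partial_i}\sigma)^\alpha=\partial_i\sigma^\alpha+\Gamma^\alpha_{\beta\gamma}\varphi_i^\beta\sigma^\gamma$ into the curvature terms of \eqref{tension-2s}, with the paper simply recording the key identity $\big(R^N(\bar\nabla_{e_j}\bar\Delta^{s+\ell-2}\tau,\bar\Delta^{s-\ell-1}\tau)d\varphi(e_j)\big)^\alpha=u^\delta_{s-\ell-1}\langle v^\gamma_{s+\ell-2},d\varphi^\beta\rangle R^\alpha_{\beta\gamma\delta}+u^\vartheta_{s+\ell-2}u^\delta_{s-\ell-1}\langle d\varphi^\eta,d\varphi^\beta\rangle R^\alpha_{\beta\gamma\delta}\Gamma^\gamma_{\vartheta\eta}$ and leaving the rest implicit. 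Your write-up is considerably more detailed, in particular the discussion of how the two Christoffel contributions combine into $E^\alpha_{\beta\delta\vartheta\eta}$ after the antisymmetry swap and index relabelling, and the consistency check against \eqref{def-F4-coor-loc}, are helpful additions that the paper omits.
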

\begin{proof}
This follows directly from the Euler-Lagrange equation \eqref{tension-2s} using
\begin{align*}
\big(R^N(\bar\nabla_{e_j}\bar\Delta^{\frac{k}{2}+\ell-2}\tau(\varphi),\Delta^{\frac{k}{2}-\ell-1}\tau(\varphi))d\varphi(e_i)\big)^\alpha
=&u^\delta_{\frac{k}{2}-\ell-1}\langle v^\gamma_{\frac{k}{2}+\ell-2},d\varphi^\beta\rangle R^\alpha_{\beta\gamma\delta} \\
&+u^\vartheta_{\frac{k}{2}+\ell-2}u^\delta_{\frac{k}{2}-\ell-1}\langle d\varphi^\eta,d\varphi^\beta\rangle R^\alpha_{\beta\gamma\delta}\Gamma^\gamma_{\vartheta\eta}.
\end{align*}

\end{proof}

\begin{Lem}
Suppose \(\varphi,\tilde\varphi\) are two polyharmonic maps with corresponding variables \eqref{def-u-general} and \eqref{def-u-tilda-general}.
Then the following estimate holds
\begin{align}
\label{estimate-Fk}
|(F^k)^\alpha-(\tilde F^k)^\alpha|\leq
C\Big(|\varphi-\tilde\varphi|+|d\varphi-d\tilde\varphi|+\sum_{\ell=0}^{k-2}|u_\ell-\tilde u_\ell|+\sum_{\ell=0}^{k-3}|v_\ell-\tilde v_\ell|
+|\nabla u_{k-2}-\nabla\tilde u_{k-2}| \Big ).
\end{align}
\end{Lem}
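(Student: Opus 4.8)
The plan is to read off from \eqref{definition-Fk-even} that each component $(F^k)^\alpha$ is a \emph{finite sum of products}, every factor of which is of one of two types: either it is a component of one of the variables $u_0,\dots,u_{k-2}$, $v_0,\dots,v_{k-3}$, $\nabla u_{k-2}$, $d\varphi$ from \eqref{def-u-general} (the last two entering only through $A^\alpha_{k-1}$), or it is a smooth coefficient evaluated at $\varphi$ — an entry of the inverse metric $g^{ij}$, which is hidden in the abbreviation $\langle\cdot,\cdot\rangle$ and depends only on the base point, together with $R^\alpha_{\beta\gamma\delta}$, the Christoffel symbols, the symbols $S^\alpha_{\beta\omega\vartheta}$ of \eqref{eq:definitionofS} and the symbols $E^\alpha_{\beta\delta\vartheta\eta}$, all of which are smooth functions of the target coordinates. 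The odd case $k=2s+1$ is handled in exactly the same way, starting from \eqref{tension-2s+1} instead of \eqref{tension-2s}. The term $-A^\alpha_{k-1}$ deserves one remark: from the explicit expression for $A^\alpha_{j+1}$ recalled in the proof of \eqref{estimate-A}, together with $\Delta\varphi^\alpha=-u_0^\alpha+\langle d\varphi^\beta,d\varphi^\gamma\rangle\Gamma^\alpha_{\beta\gamma}$, one checks that $A^\alpha_{k-1}$ is again a sum of products of the admissible type, now built from $u_{k-2}$, $\nabla u_{k-2}$, $u_0$, $d\varphi$ and coefficients at $\varphi$, and that the $v_\ell$ enter only undifferentiated.

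Next I would fix, as in the proof of Theorem \ref{Th-harmonic-everywhere}, an open set $D$ whose closure is compact and contained in the common coordinate domain. On $\overline{D}$ all the variables listed above, all the coefficient functions, and the analogous objects attached to $\tilde\varphi$ are bounded, and $\varphi(\overline{D})$, $\tilde\varphi(\overline{D})$ lie in a fixed compact subset of the target chart. For a single product term $T=c(\varphi)\,P_1\cdots P_r$ of the above type, with $c$ a coefficient function and the $P_a$ components of the variables, I would use the telescoping identity
\[
T-\tilde T=\big(c(\varphi)-c(\tilde\varphi)\big)\,P_1\cdots P_r
+c(\tilde\varphi)\sum_{a=1}^{r}\tilde P_1\cdots\tilde P_{a-1}\,(P_a-\tilde P_a)\,P_{a+1}\cdots P_r .
\]
The coefficient difference is controlled by the mean-value inequality, $|c(\varphi)-c(\tilde\varphi)|\le C\,|\varphi-\tilde\varphi|$, exactly as in the preceding lemmata; every other factor is bounded on $\overline{D}$, so each summand on the right is $\le C\,|P_a-\tilde P_a|$, and $|P_a-\tilde P_a|$ is one of $|\varphi-\tilde\varphi|$, $|d\varphi-d\tilde\varphi|$, $|u_\ell-\tilde u_\ell|$ with $0\le\ell\le k-2$, $|v_\ell-\tilde v_\ell|$ with $0\le\ell\le k-3$, or $|\nabla u_{k-2}-\nabla\tilde u_{k-2}|$.

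Summing these bounds over the finitely many product terms occurring in \eqref{definition-Fk-even} and in $A^\alpha_{k-1}$, and absorbing everything into one constant $C$, then gives \eqref{estimate-Fk}. I do not expect a genuine obstacle: the argument is the same bookkeeping already carried out in the earlier lemmata. The one point that really has to be checked is that neither $F^k$ nor $A^\alpha_{k-1}$ involves a derivative of any $v_\ell$ nor a second derivative of $u_{k-2}$, so that every term on the right-hand side of \eqref{estimate-Fk} is a component of $z=u-\tilde u$ or a first derivative of such a component — precisely the form needed in order to apply Aronszajn's Theorem \ref{aro-theorem} to $z$ in the concluding step of the proof of Theorem \ref{main-theorem}.
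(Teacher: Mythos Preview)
Your proposal is correct and follows essentially the same approach as the paper: the paper also uses the explicit expression \eqref{definition-Fk-even}, telescopes each product term so that exactly one factor at a time becomes a difference, applies the mean-value inequality to the coefficient functions $R^\alpha_{\beta\gamma\delta}(\varphi)$, $\Gamma^\alpha_{\beta\gamma}(\varphi)$, $E^\alpha_{\beta\delta\vartheta\eta}(\varphi)$, and bounds the remaining factors on a compact set. The only cosmetic difference is that the paper writes out the telescoping separately for each of the curvature terms and then invokes \eqref{estimate-A} (with $v_{k-2}$ replaced by $\nabla u_{k-2}$) for $A^\alpha_{k-1}$, whereas you state the telescoping identity once in general and treat $A^\alpha_{k-1}$ directly from its formula.
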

\begin{proof}
Suppose we have two polyharmonic maps \(\varphi,\tilde\varphi\) of order \(k=2s\). Then, from \eqref{definition-Fk-even}, we get
\begin{align*}
(F^k)^\alpha-(\tilde F^k)^\alpha=&-A^\alpha_{k-1}+\tilde A^\alpha_{k-1} \\
&-u^\delta_{k-2}\langle d\varphi^\gamma,d\varphi^\beta\rangle R^\alpha_{\beta\gamma\delta}(\varphi)
+\tilde u^\delta_{k-2}\langle d\tilde\varphi^\gamma,d\tilde\varphi^\beta\rangle R^\alpha_{\beta\gamma\delta}(\tilde\varphi) \\
&-\sum_{\ell=1}^{\frac{k}{2}-1}\big(u^\delta_{\frac{k}{2}-\ell-1}\langle v^\gamma_{\frac{k}{2}+\ell-2},d\varphi^\beta\rangle R^\alpha_{\beta\gamma\delta}(\varphi)
-\tilde u^\delta_{\frac{k}{2}-\ell-1}\langle \tilde v^\gamma_{\frac{k}{2}+\ell-2},d\tilde\varphi^\beta\rangle R^\alpha_{\beta\gamma\delta}(\tilde\varphi) \big) \\
&-\sum_{\ell=1}^{\frac{k}{2}-1}\big(u^\vartheta_{\frac{k}{2}+\ell-2}u^\delta_{\frac{k}{2}-\ell-1}\langle d\varphi^\eta,d\varphi^\beta\rangle E^\alpha_{\beta\delta\vartheta\eta}(\varphi) 
-\tilde u^\vartheta_{\frac{k}{2}+\ell-2}\tilde u^\delta_{\frac{k}{2}-\ell-1}\langle d\tilde\varphi^\eta,d\tilde\varphi^\beta\rangle E^\alpha_{\beta\delta\vartheta\eta}(\tilde\varphi) \big) \\
&-\sum_{\ell=1}^{\frac{k}{2}-1}\big(u^\delta_{\frac{k}{2}+\ell-2}\langle v^\gamma_{\frac{k}{2}-\ell-1},d\varphi^\beta\rangle R^\alpha_{\beta\gamma\delta}(\varphi)
-\tilde u^\delta_{\frac{k}{2}+\ell-2}\langle \tilde v^\gamma_{\frac{k}{2}-\ell-1},d\tilde\varphi^\beta\rangle R^\alpha_{\beta\gamma\delta}(\tilde\varphi)\big).
\end{align*}
In order to estimate the first term on the right hand side we use \eqref{estimate-A}. It is straightforward to estimate
\begin{align*}
|u^\delta_{k-2}\langle d\varphi^\gamma,d\varphi^\beta\rangle R^\alpha_{\beta\gamma\delta}(\varphi)
-\tilde u^\delta_{k-2}\langle d\tilde\varphi^\gamma,d\tilde\varphi^\beta\rangle R^\alpha_{\beta\gamma\delta}(\tilde\varphi)|
\leq C(|\varphi-\tilde\varphi|+|d\varphi-d\tilde\varphi|+|u_{k-2}-\tilde u_{k-2}|),
\end{align*}
which controls the second term on the right hand side. Concerning the first term inside the sum we rewrite
\begin{align*}
u^\delta_{\frac{k}{2}-\ell-1}&\langle v^\gamma_{\frac{k}{2}+\ell-2},d\varphi^\beta\rangle R^\alpha_{\beta\gamma\delta}(\varphi)
-\tilde u^\delta_{\frac{k}{2}-\ell-1}\langle \tilde v^\gamma_{\frac{k}{2}+\ell-2},d\tilde\varphi^\beta\rangle R^\alpha_{\beta\gamma\delta}(\tilde\varphi) \\
=&(u^\delta_{\frac{k}{2}-\ell-1}
-\tilde u^\delta_{\frac{k}{2}-\ell-1})\langle v^\gamma_{\frac{k}{2}+\ell-2},d\varphi^\beta\rangle R^\alpha_{\beta\gamma\delta}(\varphi)
+\tilde u^\delta_{\frac{k}{2}-\ell-1}\langle v^\gamma_{\frac{k}{2}+\ell-2}-\tilde v^\gamma_{\frac{k}{2}+\ell-2},d\varphi^\beta\rangle R^\alpha_{\beta\gamma\delta}(\varphi) \\
&+\tilde u^\delta_{\frac{k}{2}-\ell-1}\langle\tilde v^\gamma_{\frac{k}{2}+\ell-2},d\varphi^\beta-\tilde d\varphi^\beta\rangle R^\alpha_{\beta\gamma\delta}(\varphi)
+\tilde u^\delta_{\frac{k}{2}-\ell-1}\langle\tilde v^\gamma_{\frac{k}{2}+\ell-2},\tilde d\varphi^\beta\rangle 
\big(R^\alpha_{\beta\gamma\delta}(\varphi)-R^\alpha_{\beta\gamma\delta}(\tilde\varphi)\big).
\end{align*}
Hence, we deduce the estimate
\begin{align*}
|u^\delta_{\frac{k}{2}-\ell-1}&\langle v^\gamma_{\frac{k}{2}+\ell-2},d\varphi^\beta\rangle R^\alpha_{\beta\gamma\delta}(\varphi)
-\tilde u^\delta_{\frac{k}{2}-\ell-1}\langle \tilde v^\gamma_{\frac{k}{2}+\ell-2},d\tilde\varphi^\beta\rangle R^\alpha_{\beta\gamma\delta}(\tilde\varphi)| \\
&\leq C(|\varphi-\tilde\varphi|+|d\varphi-d\tilde\varphi|+|u_{\frac{k}{2}-\ell-1}-\tilde u_{\frac{k}{2}-\ell-1}|
+|v_{\frac{k}{2}+\ell-2}-\tilde v_{\frac{k}{2}+\ell-2}|).
\end{align*}

Regarding the second term in the sum we get
\begin{align*}
u^\vartheta_{\frac{k}{2}+\ell-2}&u^\delta_{\frac{k}{2}-\ell-1}\langle d\varphi^\eta,d\varphi^\beta\rangle E^\alpha_{\beta\delta\vartheta\eta}(\varphi) 
-\tilde u^\vartheta_{\frac{k}{2}+\ell-2}\tilde u^\delta_{\frac{k}{2}-\ell-1}\langle d\tilde\varphi^\eta,d\tilde\varphi^\beta\rangle E^\alpha_{\beta\delta\vartheta\eta}(\tilde\varphi) \\
=&(u^\vartheta_{\frac{k}{2}+\ell-2}-\tilde u^\vartheta_{\frac{k}{2}+\ell-2})u^\delta_{\frac{k}{2}-\ell-1}\langle d\varphi^\eta,d\varphi^\beta\rangle E^\alpha_{\beta\delta\vartheta\eta}(\varphi) \\
&+\tilde u^\vartheta_{\frac{k}{2}+\ell-2}(u^\delta_{\frac{k}{2}-\ell-1}-\tilde u^\delta_{\frac{k}{2}-\ell-1})\langle d\varphi^\eta,d\varphi^\beta\rangle E^\alpha_{\beta\delta\vartheta\eta}(\varphi)\\
&+\tilde u^\vartheta_{\frac{k}{2}+\ell-2}\tilde u^\delta_{\frac{k}{2}-\ell-1}\langle d\varphi^\eta-d\tilde\varphi^\eta,d\varphi^\beta\rangle E^\alpha_{\beta\delta\vartheta\eta}(\varphi)
+\tilde u^\vartheta_{\frac{k}{2}+\ell-2}\tilde u^\delta_{\frac{k}{2}-\ell-1}\langle d\tilde\varphi^\eta,d\varphi^\beta-d\tilde\varphi^\beta\rangle E^\alpha_{\beta\delta\vartheta\eta}(\varphi)\\
&+\tilde u^\vartheta_{\frac{k}{2}+\ell-2}\tilde u^\delta_{\frac{k}{2}-\ell-1}\langle d\tilde\varphi^\eta,d\tilde\varphi^\beta\rangle 
\big(E^\alpha_{\beta\delta\vartheta\eta}(\varphi) - E^\alpha_{\beta\delta\vartheta\eta}(\tilde\varphi)\big)
\end{align*}
and obtain the estimate
\begin{align*}
|u^\vartheta_{\frac{k}{2}+\ell-2}&u^\delta_{\frac{k}{2}-\ell-1}\langle d\varphi^\eta,d\varphi^\beta\rangle E^\alpha_{\beta\delta\vartheta\eta}(\varphi) 
-\tilde u^\vartheta_{\frac{k}{2}+\ell-2}\tilde u^\delta_{\frac{k}{2}-\ell-1}\langle d\tilde\varphi^\eta,d\tilde\varphi^\beta\rangle E^\alpha_{\beta\delta\vartheta\eta}(\tilde\varphi)| \\
&\leq C\big(|u_{\frac{k}{2}+\ell-2}-\tilde u_{\frac{k}{2}+\ell-2}|+|u_{\frac{k}{2}-\ell-1}-\tilde u_{\frac{k}{2}-\ell-1}|+|d\varphi-d\tilde\varphi|+|\varphi-\tilde\varphi|\big).
\end{align*}
The last term in the sum may be rewritten as
\begin{align*}
u^\delta_{\frac{k}{2}+\ell-2}&\langle v^\gamma_{\frac{k}{2}-\ell-1},d\varphi^\beta\rangle R^\alpha_{\beta\gamma\delta}(\varphi)
-\tilde u^\delta_{\frac{k}{2}+\ell-2}\langle \tilde v^\gamma_{\frac{k}{2}-\ell-1},d\tilde\varphi^\beta\rangle R^\alpha_{\beta\gamma\delta}(\tilde\varphi) \\
=&(u^\delta_{\frac{k}{2}+\ell-2}
-\tilde u^\delta_{\frac{k}{2}+\ell-2})\langle v^\gamma_{\frac{k}{2}-\ell-1},d\varphi^\beta\rangle R^\alpha_{\beta\gamma\delta}(\varphi)
+\tilde u^\delta_{\frac{k}{2}+\ell-2}\langle v^\gamma_{\frac{k}{2}-\ell-1}
-\tilde v^\gamma_{\frac{k}{2}-\ell-1},d\varphi^\beta\rangle R^\alpha_{\beta\gamma\delta}(\varphi) \\
&+\tilde u^\delta_{\frac{k}{2}+\ell-2}\langle \tilde v^\gamma_{\frac{k}{2}-\ell-1},d\varphi^\beta
-d\tilde\varphi^\beta\rangle R^\alpha_{\beta\gamma\delta}(\varphi)
+\tilde u^\delta_{\frac{k}{2}+\ell-2}\langle \tilde v^\gamma_{\frac{k}{2}-\ell-1},d\tilde\varphi^\beta\rangle 
\big(R^\alpha_{\beta\gamma\delta}(\varphi)-R^\alpha_{\beta\gamma\delta}(\tilde\varphi)\big)
\end{align*}
which leads us to the estimate
\begin{align*}
|u^\delta_{\frac{k}{2}+\ell-2}&\langle v^\gamma_{\frac{k}{2}-\ell-1},d\varphi^\beta\rangle R^\alpha_{\beta\gamma\delta}(\varphi)
-\tilde u^\delta_{\frac{k}{2}+\ell-2}\langle \tilde v^\gamma_{\frac{k}{2}-\ell-1},d\tilde\varphi^\beta\rangle R^\alpha_{\beta\gamma\delta}(\tilde\varphi)| \\
&\leq C\big(|u_{\frac{k}{2}+\ell-2}-\tilde u_{\frac{k}{2}+\ell-2}|+|v_{\frac{k}{2}-\ell-1}-\tilde v_{\frac{k}{2}-\ell-1}|
+|d\varphi-d\tilde\varphi|+|\varphi-\tilde\varphi|\big).
\end{align*}
The claim now follows by also using \eqref{estimate-A}, where now we have to write $|\nabla u_{k-2}- \nabla \tilde u_{k-2}|$ instead of $|v_{k-2}- \tilde v_{k-2}|$ since these last functions do not appear in the definition of $z$.
\end{proof}

Combining the inequalities \eqref{estimate-Delta-varphi}, \eqref{estimate-differential}, \eqref{estimate-A}, \eqref{estimate-dA} and \eqref{estimate-Fk}
we arrive at
\begin{align}
\label{estimate-full-Deltaz}
|\Delta z|\leq C\big(&|\varphi-\tilde\varphi|+|d\varphi-d\tilde\varphi|+|\nabla d\varphi-\nabla d\tilde\varphi| \\
\nonumber&+\sum_{\ell=0}^{k-2}|u_\ell-\tilde u_\ell|
+\sum_{\ell=0}^{k-3}(|v_\ell-\tilde v_\ell|+|\nabla v_\ell-\nabla\tilde v_\ell|)
+|\nabla u_{k-2}-\nabla\tilde u_{k-2}|
\big).
\end{align}
Due to the estimate \eqref{estimate-full-Deltaz} the assumptions of Theorem \ref{aro-theorem} are satisfied
and we can conclude that \(z=0\) which in particular implies that \(\varphi=\tilde\varphi\).
To complete the proof we make use of the same globalization argument which was employed in \cite[Proof of Theorem~1.3]{MR3990379}.
\end{proof}

\begin{proof}[Proof of Theorem \ref{theorem-totally-geodesic-sphere}] 
The proof is again based on Aronszajn's Theorem \ref{aro-theorem} and the explicit
expressions of the Christoffel symbols on \(\mathbb{S}^n\).

Let \(\mathbb{S}^n\) be the Euclidean unit sphere and denote by \(N\) and \(S\)
the north and south pole, respectively.
It is well-known that
\begin{align*}
\mathbb{S}^n\setminus\{N,S\}=\left(\mathbb{S}^{n-1}\times (0,\pi),\sin^2 s \cdot g_{\mathbb{S}^{n-1}}+ds^2\right).
\end{align*}

Let \((y^a)\) be local coordinates on \(\mathbb{S}^{n-1},a=1,\ldots,n-1\).
Then \(y=(y^1,\ldots,y^{n-1},y^n=s)=(\tilde{y},s)\) are local coordinates on \(\mathbb{S}^n\setminus\{N,S\}\).

In this geometric setup the Christoffel symbols on \(\mathbb{S}^n\) are given by

\begin{equation}
\label{christoffel-sphere-equator}
\begin{split}
\Gamma^a_{bc}(y)=&\tilde\Gamma^a_{bc}(\tilde{y}),\quad a,b,c=1,\ldots,n-1,\\
\Gamma^n_{bc}(y)=&-\sin s\cos s \ \tilde g_{bc}(\tilde{y})=-\frac{1}{2}\sin(2s) \ \tilde g_{bc}(\tilde{y}),\\
\Gamma^a_{nn}(y)=&\Gamma^n_{nn}(y)=\Gamma^n_{bn}(y)=0,\\
\Gamma^a_{bn}(y)=&\frac{\cos s}{\sin s}\delta^a_b,\\
\end{split}
\end{equation}
where we use a ``\;\(\tilde{}\)\;'' to indicate objects on \(\mathbb{S}^{n-1}\).
The equator \(\mathbb{S}^{n-1}\) is given by
\begin{align*}
\mathbb{S}^{n-1}:\quad  y^n=s=\frac{\pi}{2}.
\end{align*}

In the special case that $N=\s^n$ we know that:
\begin{equation}
\label{tensor-curvature-s^n}
R(X,Y)Z= -\langle X,Z \rangle Y+\langle Y,Z \rangle X \quad  \forall \,X,Y,Z \in C(T\s^n) \,.
\end{equation}
Next, using \eqref{tensor-curvature-s^n}, we express $R^\alpha_{\delta \beta \gamma}$ with respect to our local coordinates \((\tilde{y},s)\). For our purposes we only need to compute  explicitly $R^n_{\delta \beta \gamma}$. Using \eqref{Convention-R} and \eqref{tensor-curvature-s^n}, we find that the only non-zero terms of this type are:
\begin{equation}\label{tensor-curvature-sn-last-component}
 - R^n_{ a n b}= R^n_{ a b n}=\langle R\left (\frac{\partial}{\partial y^b},\frac{\partial}{\partial s} \right ) \frac{\partial}{\partial y^a},\frac{\partial}{\partial s}  \rangle=-(\sin  s)^2\,\tilde{g}_{b a}(\tilde{y})\,.\\
  \end{equation} 

Now, let \((U,x^i)\) be a local chart on \(M\) and denote the domain of the above local coordinates on \(\mathbb{S}^n\setminus\{N,S\}\) by \(V\).
In addition, we assume that \(\varphi(U)\subset V\).

For simplicity, we again denote the expression of \(\varphi\) in local coordinates also by \(\varphi\), i.e.
\begin{align*}
\varphi=(\varphi^1,\ldots,\varphi^n).
\end{align*}

Assume that \(W\) is an open subset of \(U\) and \(\varphi(W)\subset \mathbb{S}^{n-1}\), i.e. \(\varphi(W)\subset \mathbb{S}^{n-1}\cap V\).
Hence, in \(W\) we have \(\varphi^n=\frac{\pi}{2}\). Now, define \(f\colon U\to\R, f:=\varphi^n-\frac{\pi}{2}\).
Clearly, the function \(f\) vanishes when restricted to \(W\).

Let \(D\) be an open subset of \(U\) such that its closure in \(M\)
is compact and included in $U$, and \(W\subset D\subset U\).

We define a vector-valued function $y$ for the \(n\)-th component of the map \(\varphi\) as follows
\begin{equation}\label{def-u-sn}
y=\left ( \begin{array}{l}
f\\
df \\
u_0^n\\
v_0^n\\
u_1^n\\
v_1^n\\
\vdots \\
v^n_{k-3}\\
u^n_{k-2}
\end{array} \right )\,,
\end{equation}
where
$u_0^n,\ldots,u_{k-2}^n$ and $v_0^n,\ldots,v_{k-3}^n$ are the $n$-components of the variables $u_0,\ldots,u_{k-2}$ and $v_0,\ldots,v_{k-3}$ defined as in the proof of Theorem~\ref{Th-harmonic-everywhere}. In addition, we define 
\begin{equation}\label{def-F}
F=\left ( \begin{array}{l}
\Delta f\\
\Delta (df)\\
u_1^n-A_1^n\\
d(u_1^n-A_1^n)\\
u_2^n-A_2^n\\
d(u_2^n-A_2^n)\\
\vdots\\
d(u^n_{k-2}-A^n_{k-2})\\
(F^k)^n
\end{array} \right ) \,.
\end{equation}

Again, by construction 
$$
\Delta y =F+\textrm{~terms linear in~} v^n_j,~~ \nabla v^n_j, \qquad j=0,\ldots, k-3.
$$ 

In the following we will give the proof of Theorem \ref{theorem-totally-geodesic-sphere}.
We will only consider the case of a polyharmonic map of even order as the odd case
can be treated by exactly the same methods.

\begin{Lem}
\label{lemma-sampson-estimate-A}
Let \(\varphi\colon M\to\s^n\) be a map with corresponding variables \eqref{def-u-sn}. Assume that \(0 \leq j\leq k-3\).
Then the following estimates hold on $D$
\begin{equation}
\label{estimate-A-sampson-sn}
\begin{split}
|\Delta f|\leq & C(|f|+|u_0^n|), \\
|\Delta (df)|\leq & C(|f|+|df|+|\nabla df|+|v_0^n|), \\
|A^n_{j+1}|\leq & C(|f|+|df|)\,.
\end{split}
\end{equation}
\end{Lem}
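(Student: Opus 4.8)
The plan is to read off all three inequalities from the explicit Christoffel symbols \eqref{christoffel-sphere-equator} of $\s^n$ in the equator coordinates, combined with the elementary fact that $\sin(2s)=\sin(2f+\pi)=-\sin(2f)$, so $|\sin(2s)|\le 2|f|$ everywhere on $D$, and, more generally, that any smooth function of $s$ vanishing at $s=\frac{\pi}{2}$ is $O(|f|)$ on $\overline{D}$ by the mean-value inequality. Observe that the polyharmonic map equation plays no role here: this is a purely local and geometric statement. Throughout I will use that, $\varphi$ being smooth and $\overline{D}$ compact and contained in $U$, the following are bounded on $\overline{D}$: the metric coefficients $g^{ij}$ of $M$; the data $\tilde g_{bc},\tilde\Gamma^a_{bc}$ of $\s^{n-1}$ and all their $y$-derivatives; the partial derivatives $\varphi_i^a$ and $\varphi_{ij}^a$ with $a\le n-1$; and the components $u_j^\alpha$ of $\bar\Delta^j\tau(\varphi)$ for all $\alpha$ together with their first partial derivatives. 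All of these will be treated as bounded coefficients.

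First I would treat $\Delta f$. Taking the $n$-th component of \eqref{tension-local} and using $\varphi^n=f+\frac{\pi}{2}$ gives $\Delta f=-u_0^n+\langle d\varphi^\beta,d\varphi^\gamma\rangle\Gamma^n_{\beta\gamma}$; by \eqref{christoffel-sphere-equator} the symbol $\Gamma^n_{\beta\gamma}$ vanishes unless $\beta,\gamma\le n-1$, in which case it equals $-\frac{1}{2}\sin(2s)\tilde g_{\beta\gamma}=O(|f|)$, whence $|\Delta f|\le|u_0^n|+C|f|$. For $\Delta(df)$ I would differentiate this identity in $x^i$ and add the commutator between $\Delta$ and $\partial/\partial x^i$, exactly as in \eqref{Deltav0}. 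The term coming from $\partial_i(\langle d\varphi^\beta,d\varphi^\gamma\rangle\Gamma^n_{\beta\gamma})$ is, when $\partial_i$ falls on $\sin(2s)$, of the form $\cos(2s)\,\varphi_i^n\,(\text{bounded})$, which is $O(|df|)$ since $\varphi_i^n=(df)_i$, and otherwise retains a factor $\sin(2s)=O(|f|)$; the commutator terms are linear in the $\varphi_i^n$ and in the $\varphi_{ij}^n=\partial_j(df)_i$, hence controlled by $|df|+|\nabla df|$; and $\partial_i(-u_0^n)=-(v_0^n)_i$. This yields the second estimate.

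For $A^n_{j+1}$ I would start from the local expression derived in the proof of Theorem \ref{main-theorem}, namely $A^n_{j+1}=-2\langle du_j^\nu,d\varphi^\beta\rangle\Gamma^n_{\beta\nu}-u_j^\nu u_0^\beta\Gamma^n_{\beta\nu}+u_j^\nu\langle d\varphi^\vartheta,d\varphi^\sigma\rangle C^n_{\vartheta\sigma\nu}$, with $C^n_{\vartheta\sigma\nu}=\Gamma^\mu_{\vartheta\sigma}\Gamma^n_{\mu\nu}-S^n_{\vartheta\sigma\nu}$ and $S$ as in \eqref{eq:definitionofS}. The first two terms each carry a factor $\Gamma^n_{\beta\nu}$, hence are $O(|f|)$. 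In $C^n_{\vartheta\sigma\nu}$ every summand except two carries a factor of some $\Gamma^n$ (arbitrary lower indices) or of a $y^a$-derivative with $a\le n-1$ of such a symbol, all proportional to $\sin(2s)$ and therefore $O(|f|)$; the two exceptions are $\partial\Gamma^n_{\vartheta\nu}/\partial y^\sigma$ with $\sigma=n$ and $\partial\Gamma^n_{\sigma\nu}/\partial y^\vartheta$ with $\vartheta=n$, each equal to $-\cos(2s)$ times a component of $\tilde g$ and so merely $O(1)$ — but when substituted back into $u_j^\nu\langle d\varphi^\vartheta,d\varphi^\sigma\rangle C^n_{\vartheta\sigma\nu}$ they force $\sigma=n$, respectively $\vartheta=n$, producing a factor $\langle d\varphi^\vartheta,d\varphi^n\rangle=g^{ij}\varphi_i^\vartheta(df)_j=O(|df|)$ (or the symmetric expression). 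Hence $A^n_{j+1}=O(|f|+|df|)$, which is the third estimate.

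The main obstacle is precisely this last point. Unlike in Theorems \ref{Th-harmonic-everywhere} and \ref{main-theorem}, only the $n$-th components $u_j^n,v_j^n$ are among the unknowns \eqref{def-u-sn}, so $A^n_{j+1}$ must be controlled by $|f|$ and $|df|$ alone, even though it involves $u_j^\nu$ for every $\nu$ with coefficients that are bounded but not a priori small. The resolution is the bookkeeping just sketched: every surviving summand carries a factor proportional either to $\sin(2s)$ — hence to $f$ — or to $d\varphi^n$ — hence to $df$ — the latter occurring exactly because the only $O(1)$ derivatives of the symbols $\Gamma^n$ are those in the $s=y^n$ direction. Once \eqref{estimate-A-sampson-sn} is in place, together with a corresponding estimate for $(F^k)^n$ (treated separately) it puts the vector-valued function \eqref{def-u-sn} in the setting of Aronszajn's Theorem \ref{aro-theorem}.
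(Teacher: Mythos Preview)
Your proposal is correct and follows essentially the same route as the paper: read off $\Delta f$ from the $n$-th component of \eqref{tension-local}, use the explicit Christoffel symbols \eqref{christoffel-sphere-equator} so that every surviving term carries either a factor $\sin(2s)=O(|f|)$ or, in the two exceptional $\partial_{y^n}\Gamma^n$-terms, forces a factor $d\varphi^n=O(|df|)$. The paper proceeds identically (it merely symmetrizes $S^n_{\vartheta\sigma\nu}$ against $\langle d\varphi^\vartheta,d\varphi^\sigma\rangle$ before estimating, and leaves the $\Delta(df)$ case to the reader with ``by exactly the same methods'').
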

\begin{proof}
Recall that 
\begin{align*}
\Delta f=\Delta\varphi^n=-u_0^n+\langle d\varphi^\beta,d\varphi^\nu\rangle\Gamma^n_{\beta\nu}.
\end{align*}
Using the explicit form of the Christoffel symbols \eqref{christoffel-sphere-equator} we obtain the following expansion
\begin{align*}
\langle d\varphi^\beta,d\varphi^\nu\rangle\Gamma^n_{\beta\nu}=
\langle d\varphi^n,d\varphi^\nu\rangle\underbrace{\Gamma^n_{n\nu}}_{=0}
+\langle d\varphi^b,d\varphi^c\rangle\Gamma^n_{bc}
+\langle d\varphi^b,d\varphi^n\rangle\underbrace{\Gamma^n_{bn}}_{=0}
=\langle d\varphi^b,d\varphi^c\rangle\Gamma^n_{bc}\,.
\end{align*}
As we have
\begin{align*}
\left|\Gamma^n_{bc}\right| = & \left|-\frac{1}{2}\sin(2\varphi^n)\tilde{g}_{bc} \right| = \left|\frac{1}{2}\sin(2f)\tilde{g}_{bc} \right| \\
\nonumber \leq & C|f|
\end{align*}
we obtain
\begin{align*}
|\langle d\varphi^\beta,d\varphi^\nu\rangle\Gamma^n_{\beta\nu}|\leq C|f|
\end{align*}
establishing the first estimate. 

Now, remember that
\begin{align}\label{def-An}
A_{j+1}^n=&
-2\langle du_{j}^\nu,d\varphi^\beta\rangle\Gamma^n_{\beta\nu}
-u_{j}^\nu u_0^\beta\Gamma_{\beta\nu}^n
+u_{j}^\nu\langle d\varphi^\vartheta,d\varphi^\sigma\rangle C_{\vartheta\sigma\nu}^n,
\end{align}
where 
\begin{align*}
C_{\vartheta\sigma\nu}^n:=&
\Gamma^\mu_{\vartheta\sigma}\Gamma^n_{\mu\nu}
-\frac{\partial \Gamma_{\vartheta \nu}^n}{\partial y^\sigma} - \Gamma_{\vartheta \nu}^\gamma\, \Gamma_{\sigma \gamma}^n.
\end{align*}

Using again the explicit form of the Christoffel symbols \eqref{christoffel-sphere-equator} we find the estimates
\begin{align*}
|\langle du_{j}^\beta,d\varphi^\nu\rangle\Gamma^n_{\beta\nu}|&\leq C|f|,\\
|u_{j}^\nu u_0^\beta\Gamma_{\beta\nu}^n|&\leq C|f|.
\end{align*}
As for the terms proportional to \(C_{\vartheta\sigma\nu}^\alpha\), we note that
\begin{align*}
u_{j}^\nu\langle d\varphi^\vartheta,d\varphi^\sigma\rangle\Gamma^\mu_{\vartheta\sigma}\Gamma^n_{\mu\nu}
=u_{j}^c\langle d\varphi^\vartheta,d\varphi^\sigma\rangle\Gamma^b_{\vartheta\sigma}\Gamma^n_{bc}.
\end{align*}
This allows us to derive the estimate
\begin{align*}
|u_{j}^\nu\langle d\varphi^\vartheta,d\varphi^\sigma\rangle\Gamma^\mu_{\vartheta\sigma}\Gamma^n_{\mu\nu}|
\leq C|f|.
\end{align*}
In order to estimate \(u_{j}^\nu\langle d\varphi^\vartheta,d\varphi^\sigma\rangle\frac{\partial \Gamma_{\vartheta \nu}^n}{\partial y^\sigma}\)
we make use of the same strategy as before taking into account that
\begin{align*}
\frac{\partial \Gamma_{ab}^n}{\partial y^c}=-\frac{1}{2}\sin(2f)\frac{\partial\tilde g_{ab}}{\partial y^c}
\end{align*}
which follows from \eqref{christoffel-sphere-equator}.

Since the last two terms are proportional to \(C_{\vartheta\sigma\nu}^\alpha\), they can then be estimated as
\begin{align*}
|u_{j}^\nu\langle d\varphi^\vartheta,d\varphi^\sigma\rangle\frac{\partial \Gamma_{\vartheta \nu}^n}{\partial y^\sigma}|
&\leq C(|f|+|df|), \\
|u_{j}^\nu\langle d\varphi^\vartheta,d\varphi^\sigma\rangle\Gamma_{\vartheta \nu}^\gamma\, \Gamma_{\sigma \gamma}^n|
&\leq C|f|.
\end{align*}
This proves the third estimate of the lemma. The estimate on \(\Delta(df)\) can be achieved by 
exactly the same methods.
\end{proof}

\begin{Lem}
Let \(\varphi\colon M\to\s^n\) be a map with corresponding variables \eqref{def-u-sn}. Assume that \(0 \leq j\leq k-3\).
Then the following estimate holds
\begin{align}
\label{estimate-dA-sampson-sn}
|dA^n_{j+1}|\leq C(|f|+|df|+|\nabla df|)\,.
\end{align}
\end{Lem}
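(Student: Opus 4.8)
The plan is to establish \eqref{estimate-dA-sampson-sn} by the same method used for \eqref{estimate-A-sampson-sn}, one differentiation higher, exploiting the explicit Christoffel symbols \eqref{christoffel-sphere-equator} of $\s^n$ near the equator. First I would take the expansion of $\nabla_i A^\alpha_{j+1}$ computed in the proof of the lemma containing \eqref{estimate-dA}, specialise it to $\alpha=n$, and observe that, since $\nabla_i=\partial_i$ on a scalar component, the one-form $dA^n_{j+1}$ has components $\nabla_i A^n_{j+1}$; hence it suffices to bound on $D$ each of the finitely many summands of $\nabla_i A^n_{j+1}$, including those originating from $\partial_{y^\delta}C^n_{\vartheta\sigma\nu}$. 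The next step is to record that on $D$ all ``horizontal'' data are bounded by a constant: because $\overline D$ is compact and contained in $U$, the set $\varphi(\overline D)$ is compact in $V=\s^{n-1}\times(0,\pi)$ and therefore stays in a slab $\{\epsilon\le s\le\pi-\epsilon\}$, so that $\cot s$, the metric $\tilde g$ of $\s^{n-1}$ and all their derivatives are bounded along $\varphi$; likewise $u_0$, $u_j$, $v_j=\nabla u_j$, $\nabla v_j$, the tangential components $d\varphi^b,\nabla d\varphi^b$ with $b\le n-1$, and the Christoffel symbols of $M$ together with their derivatives are bounded on $D$.

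The heart of the matter is the structural remark, already used for \eqref{estimate-A-sampson-sn}, that by \eqref{christoffel-sphere-equator} a Christoffel symbol of $\s^n$ with upper index $n$ is non-zero only for tangential lower indices, with $\Gamma^n_{bc}(\varphi)=\frac{1}{2}\sin(2f)\,\tilde g_{bc}(\tilde\varphi)$; its tangential $y$-derivatives retain the factor $\sin(2f)$, while $\partial_{y^n}\Gamma^n_{bc}(\varphi)=\cos(2f)\,\tilde g_{bc}(\tilde\varphi)$ is of order $1$ and $\partial_{y^n}\partial_{y^n}\Gamma^n_{bc}(\varphi)=-2\sin(2f)\,\tilde g_{bc}(\tilde\varphi)$ is again $O(|f|)$. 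Scanning $\nabla_i A^n_{j+1}$ summand by summand, one checks that each summand is a product of bounded factors with at least one factor controlled by $|f|+|df|+|\nabla df|$: every summand contains a factor $\Gamma^n_{\beta\gamma}$ or a $y$-derivative of one, possibly inside $C^n_{\vartheta\sigma\nu}$ or $\partial_{y^\delta}C^n_{\vartheta\sigma\nu}$ — each constituent of which carries such a factor — and whenever the only order-$1$ factor $\cos(2f)\,\tilde g$ appears it is never orphaned: it is always contracted, through one of the inner products $\langle\,\cdot\,,d\varphi^n\rangle$ or through the chain-rule factor $\varphi^n_i=(df)_i$, against a component of $df$, whence it is $\le C|df|$, except in the single summand $u^\nu_j\langle d\varphi^\vartheta,\nabla_i d\varphi^\sigma\rangle C^n_{\vartheta\sigma\nu}$ with $\sigma=n$, where it multiplies $\nabla_i d\varphi^n$, which equals a component of the Hessian $\nabla df$ up to a term $O(|f|)$. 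Since the remaining factors in each summand are bounded, and a product of bounded factors at least one of which is controlled by $|f|+|df|+|\nabla df|$ is again so controlled, summing over $i$ yields $|dA^n_{j+1}|\le C(|f|+|df|+|\nabla df|)$.

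I expect the only genuinely delicate point to be this index bookkeeping: one must go through each of the (roughly ten) summands of $\nabla_i A^n_{j+1}$, and through each constituent of $\partial_{y^\delta}C^n_{\vartheta\sigma\nu}$, and verify that the potentially order-$1$ factor $\cos(2f)\,\tilde g$ is always accompanied by a $d\varphi^n$ inside a contraction, or by a chain-rule factor $\varphi^n_i$, or by $\nabla_i d\varphi^n$, and never survives on its own; in particular, since the only non-zero Christoffel symbols of $\s^n$ with upper index $n$ are the $\Gamma^n_{bc}$ with both lower indices tangential (so $\Gamma^n_{n\nu}=\Gamma^n_{\beta n}=0$), one gets $C^n_{n\sigma\nu}=O(|f|)$, which disposes of the summands containing a factor $\nabla_i d\varphi^\vartheta$. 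Once this purely combinatorial step is completed, the estimate follows by the same routine computation as in the proof of \eqref{estimate-A-sampson-sn}, and I anticipate no further analytic difficulty.
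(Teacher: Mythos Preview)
Your proposal is correct and follows essentially the same approach as the paper: both start from the explicit expansion of \(\nabla_i A^n_{j+1}\), insert the Christoffel symbols \eqref{christoffel-sphere-equator}, and check summand by summand that every term is controlled by \(|f|+|df|+|\nabla df|\). The paper singles out the contribution \(u^\nu_j\langle d\varphi^\vartheta,d\varphi^\sigma\rangle\frac{\partial C^n_{\vartheta\sigma\nu}}{\partial y^\delta}\varphi^\delta_i\) (and within it the second derivative \(\partial^2_{y^\sigma y^\delta}\Gamma^n_{\vartheta\nu}\)) as the only term requiring a closer look, whereas you phrase the same bookkeeping as the general principle that every occurrence of the order-one factor \(\cos(2f)\tilde g\) is necessarily contracted against a \(d\varphi^n\), a chain-rule factor \(\varphi^n_i\), or a \(\nabla_i d\varphi^n\); this is exactly what the paper's case analysis verifies.
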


\begin{proof}
By a direct calculation we find
\begin{align*}
\nabla_i A^n_{j+1}=&-2\langle\nabla_iv_j^\gamma,d\varphi^\beta\rangle\Gamma^n_{\beta\gamma}
-2\langle v_j^\gamma,\nabla_i d\varphi^\beta\rangle\Gamma^n_{\beta\gamma}
-2\langle v_j^\gamma,d\varphi^\beta\rangle\frac{\partial\Gamma^n_{\beta\gamma}}{\partial y^\delta}\varphi^\delta_i \\
&-\nabla_iu_j^\gamma\, u_0^\beta\,\Gamma^n_{\beta\gamma}-u_j^\gamma \,\nabla_iu_0^\beta\,\Gamma^n_{\beta\gamma}
-u_j^\gamma\, u_0^\beta\,\frac{\partial\Gamma^n_{\beta\gamma}}{\partial y^\delta}\varphi^\delta_i \\
&+\nabla_i u^\nu_j\langle d\varphi^\vartheta,d\varphi^\sigma\rangle C^n_{\vartheta\sigma\nu}
+ u^\nu_j\langle\nabla_i d\varphi^\vartheta,d\varphi^\sigma\rangle C^n_{\vartheta\sigma\nu}
+ u^\nu_j\langle d\varphi^\vartheta,\nabla_id\varphi^\sigma\rangle C^n_{\vartheta\sigma\nu} \\
&+ u^\nu_j\langle d\varphi^\vartheta,d\varphi^\sigma\rangle \frac{\partial C^n_{\vartheta\sigma\nu}}{\partial y^\delta}\varphi^\delta_i.
\end{align*}
All the terms in this expression can be estimated by expanding the Christoffel symbols and using the same strategy as in the proof of Lemma~\ref{lemma-sampson-estimate-A} except the last term which requires a more careful inspection.

Again, a direct calculation yields
\begin{align*}
\frac{\partial C^n_{\vartheta\sigma\nu}}{\partial y^\delta}=
\frac{\partial \Gamma^\mu_{\vartheta\sigma}}{\partial y^\delta}
\Gamma^n_{\mu\nu}+
\Gamma^\mu_{\vartheta\sigma}
\frac{\partial \Gamma^n_{\mu\nu}}{\partial y^\delta}
-\frac{\partial^2\Gamma_{\vartheta \nu}^n}{\partial y^\sigma\partial y^\delta}
-\frac{\partial\Gamma_{\vartheta \nu}^\gamma}{\partial y^\delta}\Gamma_{\sigma \gamma}^n
-\Gamma_{\vartheta \nu}^\gamma\frac{\partial\Gamma_{\sigma \gamma}^n}{\partial y^\delta}.
\end{align*}
We realize that all terms in 
\(u^\nu_j\langle d\varphi^\vartheta,d\varphi^\sigma\rangle \frac{\partial C^n_{\vartheta\sigma\nu}}{\partial y^\delta}\varphi^\delta_i\)
can be estimated by the same reasoning used before, only the contribution that is proportional 
to the second derivative of the Christoffel symbols needs to be treated in more detail. 
Hence, let us have a closer look at
\begin{align*}
\frac{\partial^2\Gamma_{\vartheta \nu}^n}{\partial y^\sigma\partial y^\delta}
u^\nu_j\langle d\varphi^\vartheta,d\varphi^\sigma\rangle \varphi^\delta_i
=&
\frac{\partial^2\Gamma_{ab}^n}{\partial y^c\partial y^d}
u^b_j\langle d\varphi^a,d\varphi^c\rangle \varphi^d_i 
+
\frac{\partial^2\Gamma_{ab}^n}{\partial y^n\partial y^d}
u^b_j\langle d\varphi^a,d\varphi^n\rangle \varphi^d_i \\
&+
\frac{\partial^2\Gamma_{ab}^n}{\partial y^c\partial y^n}
u^b_j\langle d\varphi^a,d\varphi^c\rangle \varphi^n_i 
+
\frac{\partial^2\Gamma_{ab}^n}{\partial y^n\partial y^n}
u^b_j\langle d\varphi^a,d\varphi^n\rangle \varphi^n_i\,.
\end{align*}
Using \eqref{christoffel-sphere-equator} we obtain
\begin{align*}
\frac{\partial^2\Gamma_{ab}^n}{\partial y^c\partial y^d}=-\frac{1}{2}\sin(2f)\frac{\partial^2\tilde g_{ab}}{\partial y^c\partial y^d}
\end{align*}
and we can conclude that all terms can be estimated in such a way that the statement of the lemma holds true.
\end{proof}

\begin{Lem}
Let \(\varphi\colon M\to\s^n\) be a polyharmonic map of even order with corresponding variables \eqref{def-u-sn}.
Then the following estimate holds
\begin{align}
\label{estimate-Fk-sampson-sn}
|(F^k)^n|\leq C\big(|f|+|df|+\sum_{\ell=0}^{k-2}|u^n_\ell|+\sum_{\ell=0}^{k-3}|v^n_\ell|+|\nabla u^n_{k-2}|\big).
\end{align}
\end{Lem}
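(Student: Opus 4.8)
The plan is to start from the explicit local formula \eqref{definition-Fk-even} for $(F^k)^\alpha$, taken with $\alpha=n$, and to bound each of its finitely many terms on $D$ by $C$ times one of the quantities occurring on the right-hand side of \eqref{estimate-Fk-sampson-sn}; exactly as in the preceding lemmata, every manifestly smooth quantity (the metric coefficients $g^{ij}$, the first derivatives $\varphi^\alpha_i$, the Christoffel symbols of $M$, the sections $u_\ell$, $v_\ell$ and their components, etc.) is bounded on the closure $\overline{D}$ and gets absorbed into $C$. The whole estimate rests on the dichotomy built into \eqref{christoffel-sphere-equator} and \eqref{tensor-curvature-sn-last-component}: a Christoffel symbol of $\s^n$ with upper index $n$ is, up to smooth factors, $\Gamma^n_{bc}=-\frac{1}{2}\sin(2\varphi^n)\,\tilde g_{bc}$ --- and likewise for its derivatives in the $\tilde y$-directions --- hence it carries the factor $\sin(2\varphi^n)=-\sin(2f)$ and is $O(|f|)$ on $D$; on the other hand a curvature component $R^n_{\beta\gamma\delta}$ need not vanish on the equator, but by \eqref{tensor-curvature-sn-last-component} it is nonzero only when $\beta\in\{1,\dots,n-1\}$ and exactly one of $\gamma,\delta$ equals $n$, so that the factor contracted against it is forced to exhibit either $d\varphi^n=df$ or one of the $n$-components $u^n_\ell,v^n_\ell$.

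I would first dispose of the term $-A^n_{k-1}$. Since $A_{k-1}=A(u_{k-2},\nabla u_{k-2})$ has precisely the shape \eqref{def-An} with $j=k-2$, the computation in the proof of Lemma~\ref{lemma-sampson-estimate-A} applies verbatim (the restriction there to $0\le j\le k-3$ being purely notational): after expanding $C^n_{\vartheta\sigma\nu}$, every summand carries a $\Gamma^n$ or a $\partial\Gamma^n_{\vartheta\nu}/\partial y^\sigma$ with $\sigma<n$ and is therefore $O(|f|)$, the only exception being $u^\nu_{k-2}\langle d\varphi^\vartheta,d\varphi^n\rangle\,\partial\Gamma^n_{\vartheta\nu}/\partial y^n$, in which the factor $\partial\Gamma^n_{ab}/\partial y^n$ (equal to $\cos(2f)\,\tilde g_{ab}$ up to sign) does survive on the equator but comes multiplied by $d\varphi^n=df$. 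This gives $|A^n_{k-1}|\le C(|f|+|df|)$, which is even stronger than required; the extra term $|\nabla u^n_{k-2}|$ in the statement, not actually needed here, is retained only for uniformity with the general estimate \eqref{estimate-Fk}.

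Next I would go through the remaining ``curvature'' terms of \eqref{definition-Fk-even} one family at a time. For $u^\delta_{k-2}\langle d\varphi^\gamma,d\varphi^\beta\rangle R^n_{\beta\gamma\delta}$, \eqref{tensor-curvature-sn-last-component} leaves only $\delta=n$, which produces the factor $u^n_{k-2}$, or $\gamma=n$, which produces $\langle d\varphi^n,d\varphi^\beta\rangle=\langle df,d\varphi^\beta\rangle$; hence this term is $\le C(|df|+|u^n_{k-2}|)$. The two ``$R$'' families inside the $\ell$-sum, $u^\delta_{\frac{k}{2}-\ell-1}\langle v^\gamma_{\frac{k}{2}+\ell-2},d\varphi^\beta\rangle R^n_{\beta\gamma\delta}$ and $u^\delta_{\frac{k}{2}+\ell-2}\langle v^\gamma_{\frac{k}{2}-\ell-1},d\varphi^\beta\rangle R^n_{\beta\gamma\delta}$, are treated identically: each is nonzero only for $\delta=n$ (producing $u^n_\ell$) or $\gamma=n$ (producing $v^n_\ell$), and letting $\ell$ run over $1,\dots,\frac{k}{2}-1$ makes these indices cover $0,\dots,k-2$ for the $u$'s and $0,\dots,k-3$ for the $v$'s. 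For the last family $u^\vartheta_{\frac{k}{2}+\ell-2}u^\delta_{\frac{k}{2}-\ell-1}\langle d\varphi^\eta,d\varphi^\beta\rangle E^n_{\beta\delta\vartheta\eta}$ I would insert $E^n_{\beta\delta\vartheta\eta}=R^n_{\beta\gamma\delta}\Gamma^\gamma_{\vartheta\eta}+R^n_{\beta\gamma\vartheta}\Gamma^\gamma_{\delta\eta}$: in $R^n_{\beta\gamma\delta}\Gamma^\gamma_{\vartheta\eta}$ either $\delta=n$, so that $u^\delta_{\frac{k}{2}-\ell-1}=u^n_{\frac{k}{2}-\ell-1}$ occurs, or $\gamma=n$, whence $\Gamma^\gamma_{\vartheta\eta}=\Gamma^n_{\vartheta\eta}=O(|f|)$; symmetrically, in $R^n_{\beta\gamma\vartheta}\Gamma^\gamma_{\delta\eta}$ either $\vartheta=n$, so that $u^\vartheta_{\frac{k}{2}+\ell-2}=u^n_{\frac{k}{2}+\ell-2}$ occurs, or $\gamma=n$ and $\Gamma^n_{\delta\eta}=O(|f|)$. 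Summing over $\ell$ and adding the bound for $-A^n_{k-1}$ yields \eqref{estimate-Fk-sampson-sn}.

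The only genuine difficulty is the index bookkeeping: one must be sure that in every contraction of an $R^n$ against the remaining tensors --- whether the $R^n$ appears on its own or hidden inside an $E^n$ --- the surviving index pattern always exhibits one of the allowed quantities, the single delicate point being that $\partial\Gamma^n_{ab}/\partial y^n$, which does not die on the equator, is invariably accompanied by $d\varphi^n=df$. Once \eqref{estimate-Fk-sampson-sn} is in hand the (even) case of Theorem~\ref{theorem-totally-geodesic-sphere} follows at once: combined with \eqref{estimate-A-sampson-sn} and \eqref{estimate-dA-sampson-sn} it shows that the vector-valued function $y$ of \eqref{def-u-sn} satisfies $|\Delta y|\le C\big(|y|+|\nabla y|\big)$ on $D$ (recall that $\Delta y=F$ plus terms linear in $v^n_j$ and $\nabla v^n_j$), so that, since $y$ vanishes on $W\subset D$, Aronszajn's Theorem~\ref{aro-theorem} forces $y\equiv 0$ on $D$; in particular $f\equiv 0$ on $D$, i.e. $\varphi(D)\subset\s^{n-1}$, and the connectedness argument of \cite{MR3990379} then propagates this to all of $M$.
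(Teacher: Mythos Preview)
Your proof is correct and follows essentially the same route as the paper: you invoke the third estimate of Lemma~\ref{lemma-sampson-estimate-A} (extended to $j=k-2$, exactly as the paper does) to control $A^n_{k-1}$, and then exploit the index restriction in \eqref{tensor-curvature-sn-last-component} to force a factor of $df$, $u^n_\ell$, $v^n_\ell$ or $\Gamma^n_{\cdot\cdot}=O(|f|)$ in every curvature term, just as in the paper's proof. Your observation that $|\nabla u^n_{k-2}|$ is never actually used is accurate, and the closing paragraph correctly summarizes how the lemma feeds into the proof of Theorem~\ref{theorem-totally-geodesic-sphere}.
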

\begin{proof}

Recall that
\begin{align*}
(F^k)^n=&-A^n_{k-1}
-u^\delta_{k-2}\langle d\varphi^\gamma,d\varphi^\beta\rangle R^n_{\beta\gamma\delta} \\
\nonumber&-\sum_{\ell=1}^{\frac{k}{2}-1}\big(u^\delta_{\frac{k}{2}-\ell-1}\langle v^\gamma_{\frac{k}{2}+\ell-2},d\varphi^\beta\rangle R^n_{\beta\gamma\delta}
+u^\vartheta_{\frac{k}{2}+\ell-2}u^\delta_{\frac{k}{2}-\ell-1}\langle d\varphi^\eta,d\varphi^\beta\rangle E^n_{\beta\delta\vartheta\eta} \\
\nonumber &\hspace{1.5cm} +u^\delta_{\frac{k}{2}+\ell-2}\langle v^\gamma_{\frac{k}{2}-\ell-1},d\varphi^\beta\rangle R^n_{\beta\gamma\delta}\big), 
\end{align*}
where
\begin{align*}
E^n_{\beta\delta\vartheta\eta}:=&R^n_{\beta\gamma\delta}\Gamma^\gamma_{\vartheta\eta}+R^n_{\beta\gamma\vartheta}\Gamma^\gamma_{\delta\eta}.
\end{align*}
First, we use the third inequality in \eqref{estimate-A-sampson-sn} to estimate \(A^n_{k-1}\).
It remains to estimate the terms involving the curvature tensor.
Inserting the non-zero components of the curvature tensor given in \eqref{tensor-curvature-sn-last-component}
we find
\begin{align*}
u^\delta_{k-2}\langle d\varphi^\gamma,d\varphi^\beta\rangle R^n_{\beta\gamma\delta}
=u^\delta_{k-2}\langle d\varphi^n,d\varphi^\beta\rangle R^n_{\beta n\delta}
+u^n_{k-2}\langle d\varphi^\gamma,d\varphi^\beta\rangle R^n_{\beta\gamma n}.
\end{align*}
Hence, we may estimate
\begin{align*}
|u^\delta_{k-2}\langle d\varphi^\gamma,d\varphi^\beta\rangle R^n_{\beta\gamma\delta}|\leq 
C\big (|df|+|u^n_{k-2}| \big ).
\end{align*}
By the same reasoning we find 
\begin{align*}
|u^\delta_{\frac{k}{2}-\ell-1}\langle v^\gamma_{\frac{k}{2}+\ell-2},d\varphi^\beta\rangle R^n_{\beta\gamma\delta}|
&\leq C \big (|u^n_{\frac{k}{2}-\ell-1}|+|v^n_{\frac{k}{2}+\ell-2}| \big ),\\
|u^\delta_{\frac{k}{2}+\ell-2}\langle v^\gamma_{\frac{k}{2}-\ell-1},d\varphi^\beta\rangle R^n_{\beta\gamma\delta}|
&\leq C \big (|u^n_{\frac{k}{2}+\ell-2}|+|v^n_{\frac{k}{2}-\ell-1}| \big ).
\end{align*}
Regarding the term that is proportional to \(E^n_{\beta\delta\vartheta\eta}\) we expand
\begin{align*}
u^\vartheta_{\frac{k}{2}+\ell-2}u^\delta_{\frac{k}{2}-\ell-1}\langle d\varphi^\eta,d\varphi^\beta\rangle E^n_{\beta\delta\vartheta\eta}
=& \,
u^b_{\frac{k}{2}+\ell-2}u^\delta_{\frac{k}{2}-\ell-1}\langle d\varphi^c,d\varphi^\beta\rangle R^n_{\beta n\delta}\Gamma^n_{bc}\\
&+u^\vartheta_{\frac{k}{2}+\ell-2}u^n_{\frac{k}{2}-\ell-1}\langle d\varphi^\eta,d\varphi^\beta\rangle R^n_{\beta\gamma n}\Gamma^\gamma_{\vartheta\eta}\\
&+u^\vartheta_{\frac{k}{2}+\ell-2}u^b_{\frac{k}{2}-\ell-1}\langle d\varphi^c,d\varphi^\beta\rangle R^n_{\beta n\vartheta}\Gamma^n_{bc}\\
&+u^n_{\frac{k}{2}+\ell-2}u^\delta_{\frac{k}{2}-\ell-1}\langle d\varphi^\eta,d\varphi^\beta\rangle R^n_{\beta\gamma n}\Gamma^\gamma_{\delta\eta},
\end{align*}
where we used both \eqref{christoffel-sphere-equator} and \eqref{tensor-curvature-sn-last-component}.
Hence, we can infer the estimate
\begin{align*}
|u^\vartheta_{\frac{k}{2}+\ell-2}u^\delta_{\frac{k}{2}-\ell-1}\langle d\varphi^\eta,d\varphi^\beta\rangle E^n_{\beta\delta\vartheta\eta}|
\leq C\big (|f|+|u^n_{\frac{k}{2}+\ell-2}|+|u^n_{\frac{k}{2}-\ell-1}| \big ).
\end{align*}
The claim now follows by combining all the single estimates.
\end{proof}

Now, we show that the vector variable $y$ defined in \eqref{def-u-sn} satisfies
\begin{equation}\label{y=0}
y \equiv 0 \quad {\rm on}\,\, W\,.
\end{equation}
To prove this claim, we observe that $f=\varphi^n -\pi/2$, and consequently $df$, vanish because $\varphi$ maps $W$ into the equator. Next, using $\Delta f=0$ on $W$ and the explicit expression \eqref{christoffel-sphere-equator} of the Christoffel symbols we deduce that $u^n_0=0$. The functions $A^n_{j}$, $j=1, \ldots,k-2$ also vanish identically on $W$. This follows easily from the definition \eqref{def-An}, using again the explicit expression \eqref{christoffel-sphere-equator} of the Christoffel symbols together with $d\varphi^n=0$. Finally, from these facts it is easy to deduce that all the components of $y$ vanish on $W$ and so the claim \eqref{y=0} holds. 
 
Next, using the inequalities \eqref{estimate-A-sampson-sn}, \eqref{estimate-dA-sampson-sn} and \eqref{estimate-Fk-sampson-sn}
we find
\begin{align}
\label{estimate-full-Deltay}
|\Delta y|\leq C\big(|f|+|df|+|\nabla df|+\sum_{\ell=0}^{k-2}|u^n_\ell|+\sum_{\ell=0}^{k-3}(|v^n_\ell|+|\nabla v^n_\ell|)
+|\nabla u^n_{k-2}|\big).
\end{align}
Now, because of \eqref{y=0} and the estimate \eqref{estimate-full-Deltay}, the assumptions of Theorem \ref{aro-theorem} are satisfied
and we can conclude that \(y=0\) on $D$, which in particular implies that \(\varphi^n=\frac{\pi}{2}\),
i.e. $\varphi$ maps the whole of $D$ into $\mathbb{S}^{n-1}$.
We finish the proof of Theorem~\ref{theorem-totally-geodesic-sphere} by setting $A:=\{p\in M : \varphi(p)\in \mathbb{S}^{n-1}\}$ and using the same globalization argument as above.
\end{proof}

\begin{Bem} Theorems \ref{Th-harmonic-everywhere}, \ref{main-theorem} and \ref{theorem-totally-geodesic-sphere} have some important applications. For instance, these results can be used to simplify proofs and obtain uniqueness and reduction results for $k$-harmonic submanifolds in spheres. For simplicity, we shall illustrate these facts in the case that $k=2$:
\begin{itemize}
\item[(a)] A result of J.H. Chen (see \cite{JHChen}) says that a compact proper biharmonic hypersurface $M^m$ in $\s^{m+1}$ with $|A|^2 \leq m$ is CMC (and thus $|A|^2 = m$). The proof (in Chinese) is long and skillful, but it can be simplified using Theorem \ref{Th-harmonic-everywhere}. This simplified proof was given in \cite{MR2945959,SMCO}.

\item[(b)] All CMC proper biharmonic immersions from $\R^2$ in $\s^m$ are given in \cite{MR3523535}. The uniqueness part of this result follows from the fact that on an open subset of $\R^2$ such immersions must have a certain form and by Theorem \ref{main-theorem} their extensions to $\R^2$ are unique.

\item[(c)] Theorem \ref{theorem-totally-geodesic-sphere}, as an alternative to Theorem~\ref{Th-harmonic-everywhere}, could be used in the final argument of the proof of Theorem~1 in \cite{Vieira-arxiv}: since an open subset of $M^m$ lies in $\s^m$, the equator of $\s^{m+1}$, the whole of $M^m$ lies in $\s^m$, i.e. $M^m=\s^m$.

\noindent Also, Theorem \ref{theorem-totally-geodesic-sphere} could be useful to obtain reduction results as the first normal bundle does not need to be defined on the whole of $M$, but only on an open subset of it.
\end{itemize}

\end{Bem}

\section{Unique continuation theorems for ES-$4$-harmonic maps}
\label{section-es4}
In this section we study unique continuation properties for critical points
of the ES-$4$-energy \eqref{k-energy-es}. We shall prove two unique continuation results for $k=4$, which is the only case for which we know the explicit form of the Euler-Lagrange equations (see \cite{MR4106647}). 

The energy functional for ES-$4$-harmonic maps (corresponding to \eqref{k-energy-es} with \(k=4\)) is given by
\begin{align}
\label{energy-es4-harmonic}
E^{ES}_4(\varphi)&=\frac{1}{2}\int_M|(d+d^\ast)^4\varphi|^2\dv\\
\nonumber&=\frac{1}{2}\int_M|\bar\Delta\tau(\varphi)|^2\dv
+\frac{1}{4}\int_M|R^N(d\varphi(e_i),d\varphi(e_j))\tau(\varphi)|^2\dv.
\end{align}

Note that here and in the sequel we shall omit to write the symbol $\sum$ when it is clear from the context.

The first variation of \eqref{energy-es4-harmonic} 
was calculated in \cite[Section 3]{MR4106647} and is characterized by the vanishing 
of the ES-4-tension field \(\tau_4^{ES}(\varphi)\) given by the following expression
\begin{align}
\label{es-4-tension}
\tau_4^{ES}(\varphi)=
\tau_4(\varphi)+\hat\tau_4(\varphi).
\end{align}
Here, \(\tau_4(\varphi)\) denotes the $4$-tension field 
\begin{eqnarray*}
\tau_4(\varphi)&=&\bar\Delta^3\tau(\varphi)+\tr R^N(d\varphi(\cdot),\bar{\Delta}^2\tau(\varphi))d\varphi(\cdot)\\\nonumber
&&-\tr R^N(\bar\nabla_{(\cdot)}\bar{\Delta}\tau(\varphi),\tau(\varphi))d\varphi(\cdot)-\tr R^N(\bar\nabla_{(\cdot)}\tau(\varphi),\bar{\Delta}\tau(\varphi))d\varphi(\cdot) \,
\end{eqnarray*}
and the term \(\hat\tau_4(\varphi)\) is defined by
\begin{align}
\label{es-4-hat}
\hat{\tau}_4(\varphi)=-\frac{1}{2}\big(2\xi_1+2d^\ast\Omega_1+\bar\Delta\Omega_0+\tr R^N(d\varphi(\cdot),\Omega_0)d\varphi(\cdot)\big),
\end{align}
where we have used the following abbreviations
\begin{equation}
\label{variables-omega}
\begin{split}
\Omega_0&=R^N(d\varphi(e_i),d\varphi(e_j))(R^N(d\varphi(e_i),d\varphi(e_j))\tau(\varphi)), \\
\Omega_1(X)&=R^N(R^N(d\varphi(X),d\varphi(e_j))\tau(\varphi),\tau(\varphi))d\varphi(e_j),\\
\xi_1&=-(\nabla^N R^N)\big(d\varphi(e_j),R^N(d\varphi(e_i),d\varphi(e_j))\tau(\varphi),\tau(\varphi),d\varphi(e_i)\big ).
\end{split}
\end{equation}

We will prove the following versions of Theorems~\ref{Th-harmonic-everywhere} and \ref{main-theorem} for ES-\(4\)-harmonic maps:
\begin{Thm}
\label{theorem-es4-harmonic}
Let $\varphi \colon M \to N$ be an ES-$4$-harmonic map. If $\varphi$ is harmonic on an open subset $U$ of $M$, then $\varphi$ is harmonic everywhere.
\end{Thm}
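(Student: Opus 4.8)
The plan is to repeat, almost verbatim, the argument used for the case $k=4$ in the proof of Theorem~\ref{Th-harmonic-everywhere}, treating the extra term $\hat\tau_4(\varphi)$ in \eqref{es-4-tension} as a harmless lower order perturbation. Assuming $\tau_4^{ES}(\varphi)=0$ on $M$, I would first introduce exactly the same recursively defined sections as there: $u_0^\alpha$ via $\tau(\varphi)=u_0^\alpha\,\partial/\partial y^\alpha$, then $u_1,u_2$ through \eqref{Delta-i-tau=u-i}, so that $\bar\Delta^{i+1}\tau(\varphi)=(\Delta u_i^\alpha+A_{i+1}^\alpha)\partial/\partial y^\alpha=u_{i+1}^\alpha\,\partial/\partial y^\alpha$ and, in particular, $\bar\Delta^3\tau(\varphi)=(\Delta u_2^\alpha+A_3^\alpha)\partial/\partial y^\alpha$; together with $v_0=\nabla u_0$, $v_1=\nabla u_1$, I then assemble $u=(u_0,v_0,u_1,v_1,u_2)$. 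Since the leading term $\bar\Delta^3\tau(\varphi)$ of $\tau_4^{ES}(\varphi)$ coincides with that of $\tau_4(\varphi)$, the equation $\tau_4^{ES}(\varphi)=0$ takes the form
\[
\Delta u_2^\alpha=(F^4)^\alpha-(\hat\tau_4(\varphi))^\alpha\,,
\]
where $(F^4)^\alpha$ is precisely the right-hand side already analysed in \eqref{def-F4}--\eqref{def-F4-coor-loc}, which on a relatively compact $D\Subset U$ satisfies the Aronszajn bound \eqref{aro-voraus} in the variables $u_0,v_0,u_1,v_1,u_2$ and $\nabla u_2$.

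The only genuinely new point is to estimate $\hat\tau_4(\varphi)$. Inspecting \eqref{es-4-hat}--\eqref{variables-omega}, each of $\Omega_0$, $\Omega_1$ and $\xi_1$ is a polynomial in $\tau(\varphi)=u_0$ (degree one for $\Omega_0$, degree two for $\Omega_1$ and $\xi_1$) whose coefficients are built algebraically from $\varphi$, $d\varphi$ and $R^N$ (and $\nabla^N R^N$ for $\xi_1$); on $D$, together with their first derivatives, these coefficients are bounded, so $|\Omega_0|,|\Omega_1|,|\xi_1|\le C|u_0|$ there. Hence $|\xi_1|\le C|u_0|$ and $|\tr R^N(d\varphi(\cdot),\Omega_0)d\varphi(\cdot)|\le C|u_0|$; the term $d^\ast\Omega_1$ carries one derivative, which falls either on a bounded coefficient or on $u_0$, so $|d^\ast\Omega_1|\le C(|u_0|+|v_0|)$; and, writing $\Omega_0=T u_0$ with $T$ a smooth bounded coefficient, the Leibniz rule for $\bar\Delta=-\tr\bar\nabla^2$ gives $\bar\Delta\Omega_0=(\bar\Delta T)u_0-2(\bar\nabla T)(\bar\nabla u_0)+T\,\bar\Delta u_0$, where crucially $\bar\Delta u_0=\bar\Delta\tau(\varphi)=u_1$ by the very definition of $u_1$, so $|\bar\Delta\Omega_0|\le C(|u_0|+|v_0|+|u_1|)$. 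Altogether $|\hat\tau_4(\varphi)|\le C(|u_0|+|v_0|+|u_1|)$ on $D$, which is controlled by the components of $u$.

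Combining this with the estimate for $F^4$, and noting — exactly as in \eqref{eq-remark-cezar} — that $\Delta u$ equals the assembled vector $F^{ES}$ up to terms linear in $v_0,v_1$ and their first partial derivatives, I obtain $|\Delta u|\le C\big(\sum_a|u^a|+\sum_{a,i}|\partial u^a/\partial x^i|\big)$ on $D$, i.e. the hypothesis \eqref{aro-voraus} of Aronszajn's Theorem~\ref{aro-theorem}. Since $\varphi$ is harmonic on $U$, the function $u$ vanishes on an open subset of $D$ (all the $u_i$ and $v_i$ are built from $\tau(\varphi)$ by applying $\bar\Delta$ and $\nabla$), whence $u\equiv 0$ on $D$, so in particular $\tau(\varphi)=u_0\equiv 0$ on $D$; the statement then follows by the same connectedness/globalization argument used for Theorem~\ref{Th-harmonic-everywhere} (cf. \cite[Proof of Theorem~1.3]{MR3990379}). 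The main — though ultimately modest — obstacle is the bookkeeping confirming that $\hat\tau_4(\varphi)$ introduces no derivative of order higher than one in the unknowns $u_0,v_0,u_1$; the key observation making this work is that the a priori higher-order term $\bar\Delta\Omega_0$ collapses once $\bar\Delta\tau(\varphi)$ is identified with $u_1$, so that no analytic input beyond Theorem~\ref{aro-theorem} is required.
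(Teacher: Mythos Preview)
Your proposal is correct and follows essentially the same approach as the paper: the same vector $u=(u_0,v_0,u_1,v_1,u_2)$ is introduced, the extra term $\hat\tau_4(\varphi)$ is shown to be controlled by $u_0,v_0,u_1$ (the crucial point being that $\bar\Delta\tau(\varphi)$ is identified with $u_1$), and Aronszajn's theorem is applied. The only cosmetic difference is that the paper expands $\bar\Delta\Omega_0$ term by term in a geodesic frame and invokes the Weitzenb\"ock formula to confirm tensoriality of $\bar\nabla_{e_k}\bar\nabla_{e_k}d\varphi(e_i)$, whereas your Leibniz-rule argument on the endomorphism bundle packages the same computation more compactly.
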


\begin{Theorem}
\label{theorem-es4-main}
Let \(\varphi,\tilde\varphi\colon M\to N\) be two ES-$4$-harmonic maps. If they agree on an open subset $U$ of $M$, then they are identical.
\end{Theorem}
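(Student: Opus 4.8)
The plan is to run the argument of Theorem~\ref{main-theorem} almost verbatim, exploiting the splitting \eqref{es-4-tension}, $\tau_4^{ES}(\varphi)=\tau_4(\varphi)+\hat\tau_4(\varphi)$, in which $\tau_4(\varphi)$ is exactly the $4$-harmonic tension field already handled in the proof of Theorem~\ref{main-theorem} for $k=4$ (the two coordinate expressions agree after using the antisymmetry of $R^N$ in its first two arguments), while $\hat\tau_4(\varphi)$, given by \eqref{es-4-hat}, is a correction term of differential order at most $4$ in $\varphi$ --- hence far below the order $8$ of the leading part $\bar\Delta^3\tau(\varphi)$. Concretely, for the two maps $\varphi,\tilde\varphi$ I would introduce the vector-valued functions $u,\tilde u$ of \eqref{def-u-general} and \eqref{def-u-tilda-general} with $k=4$, whose components are $\varphi,\ d\varphi,\ u_0=\tau(\varphi),\ v_0=\nabla u_0,\ u_1=\bar\Delta\tau(\varphi),\ v_1=\nabla u_1,\ u_2=\bar\Delta^2\tau(\varphi)$ and the tilded analogues, and set $z=u-\tilde u$. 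Since $\tau_4^{ES}(\varphi)=0$ is equivalent to $\Delta u_2=F^4-\hat\tau_4(\varphi)$ in the notation of \eqref{def-F4}, the function $z$ satisfies $\Delta z=G+(\text{terms linear in }v_j-\tilde v_j,\ \nabla v_j-\nabla\tilde v_j,\ j=0,1)$, where $G$ is the vector whose first $2(k-2)=4$ rows are exactly those of \eqref{def-G} and whose last row is $\big(F^4-\tilde F^4\big)-\big(\hat\tau_4(\varphi)-\hat\tau_4(\tilde\varphi)\big)$.

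For the first four rows of $G$ the estimates \eqref{estimate-Delta-varphi}, \eqref{estimate-differential}, \eqref{estimate-A} and \eqref{estimate-dA} apply without any change, since their proofs never invoke the Euler--Lagrange equation, and the difference $F^4-\tilde F^4$ is controlled by \eqref{estimate-Fk}. Thus the one genuinely new point is to bound $\hat\tau_4(\varphi)-\hat\tau_4(\tilde\varphi)$ by $C$ times the right-hand side of \eqref{estimate-full-Deltaz} with $k=4$, i.e. by $C\big(|\varphi-\tilde\varphi|+|d\varphi-d\tilde\varphi|+|\nabla d\varphi-\nabla d\tilde\varphi|+|u_0-\tilde u_0|+|v_0-\tilde v_0|+|u_1-\tilde u_1|\big)$. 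For this I would expand $\Omega_0,\Omega_1,\xi_1$ of \eqref{variables-omega} in the local charts: $\Omega_0$, $\xi_1$ and $\tr R^N(d\varphi(\cdot),\Omega_0)d\varphi(\cdot)$ are polynomial in $d\varphi$, in $\tau(\varphi)=u_0$, and in $R^N,\nabla^N R^N$ evaluated at $\varphi$, so their differences are estimated by $|\varphi-\tilde\varphi|+|d\varphi-d\tilde\varphi|+|u_0-\tilde u_0|$ via the same ``add and subtract'' use of the mean-value inequality as in the lemmas above (the quadratic dependence on $\tau(\varphi)$ occurring in $\Omega_1$ and $\xi_1$ is harmless, since all the variables are bounded on the relatively compact set $D$); the term $d^\ast\Omega_1$ brings in one extra derivative, hence $\nabla d\varphi$ and $\nabla u_0=v_0$; and $\bar\Delta\Omega_0$, being second order in the algebraic expression $\Omega_0=R^N(d\varphi,d\varphi)\big(R^N(d\varphi,d\varphi)u_0\big)$, becomes --- after using $\bar\Delta u_0=u_1-A_1$ together with the Weitzenb\"ock identity (equivalently, the coordinate computation that produced \eqref{Deltav0}) expressing $\bar\Delta(d\varphi)$ through $\nabla\tau(\varphi)$, the Ricci curvature of $M$ and $R^N$ --- a sum of terms each consisting of a bounded coefficient times one of $u_1$, $v_0$, $u_0$, $d\varphi$, $\nabla d\varphi$; its difference is therefore again estimated by the six listed quantities. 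Collecting everything, $z$ satisfies an inequality of the form \eqref{estimate-full-Deltaz} with $k=4$, so Theorem~\ref{aro-theorem} applies on an open $D$ with $\overline D\subset U$ relatively compact, yielding $z\equiv 0$ on $D$ and hence $\varphi=\tilde\varphi$ on $D$; the globalization argument of \cite[Proof of Theorem~1.3]{MR3990379} then finishes the proof. The parity distinction does not arise here because $k=4$ is even, and Theorem~\ref{theorem-es4-harmonic} follows along the same lines, simply dropping $\varphi$ and $d\varphi$ from the list of variables and using that every term of $\hat\tau_4$ carries at least one factor of $u_0$, $v_0$ or $u_1$ and is thus linearly bounded by these on $D$.

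The step I expect to be the main obstacle is the bookkeeping inside $\bar\Delta\Omega_0$: one must make its coordinate expansion fully explicit and check that no derivative of $\varphi$ of order higher than those recorded in $z$ survives --- in particular, that the third derivatives of $\varphi$ hidden in $\bar\Delta(d\varphi)$ are genuinely reabsorbed, via the Weitzenb\"ock identity, into $v_0=\nabla u_0$ plus bounded-coefficient lower-order terms, and that in the pieces where the Laplacian lands on a $d\varphi$ factor the factor $u_0$ is retained so that linear control is not lost. Once this is done, the remaining differences are handled by precisely the mean-value-inequality computations already carried out for \eqref{estimate-A}, \eqref{estimate-dA} and \eqref{estimate-Fk}, and the proof is structurally identical to that of Theorem~\ref{main-theorem}.
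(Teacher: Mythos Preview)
Your proposal is correct and follows essentially the same route as the paper. You use the same vector of variables $u=(\varphi,d\varphi,u_0,v_0,u_1,v_1,u_2)$, reduce the new work to estimating $\hat\tau_4(\varphi)-\hat\tau_4(\tilde\varphi)$, correctly single out $\bar\Delta\Omega_0$ as the delicate term, and invoke the Weitzenb\"ock identity to rewrite $\bar\nabla_{e_k}\bar\nabla_{e_k}d\varphi(e_i)$ in terms of $\bar\nabla_{e_i}\tau(\varphi)$, $R^N$ and $\operatorname{Ric}^M$ --- which is exactly what the paper does (together with $\bar\nabla_{e_k}\bar\nabla_{e_k}\tau(\varphi)=-\bar\Delta\tau(\varphi)=-u_1$). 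The paper carries this out by first expanding $\bar\Delta\Omega_0$ and $d^\ast\Omega_1$ in a geodesic frame and checking that every resulting term has tensorial character before passing to coordinates, whereas you phrase it more directly; the substance is the same.
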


In order to prove the unique continuation Theorems \ref{theorem-es4-harmonic} and \ref{theorem-es4-main} we have to further differentiate the 
second and third term on the right hand side of \eqref{es-4-hat} as we need to write down their expressions in local coordinates.
We will then express all contributions in terms of the variables \(\{u_0,v_0=\nabla u_0,u_1,v_1=\nabla u_1,u_2\}\) which we previously
employed in the analysis of $4$-harmonic maps.

The last term 
of \eqref{es-4-hat} can easily be written in local coordinates
as we do not need to further differentiate 
it:
\begin{align*}
\big(\tr R^N(d\varphi(\cdot),\Omega_0)d\varphi(\cdot)\big)^\alpha
=&R^\alpha_{\beta\gamma\delta}R^\delta_{\vartheta\kappa\sigma}R^\vartheta_{\mu\nu\eta}
\langle d\varphi^\beta,d\varphi^\gamma\rangle\langle d\varphi^\kappa,d\varphi^\nu\rangle\langle d\varphi^\sigma,d\varphi^\eta\rangle u_0^\mu.
\end{align*}
This shows that the last
term of \eqref{es-4-hat} can be rewritten in terms of the desired
variables. Since all the terms in \eqref{es-4-hat} have a tensorial meaning, we can assume that $\{e_i\}_{i}$ is a geodesic frame field around an arbitrary point $p$ of $M$. Thus, at $p$, a computation shows that the second term of \eqref{es-4-hat} is:
\begin{align*}
-d^\ast\Omega_1=&
\underbrace{(\nabla^N R^N)\big(d\varphi(e_i),R^N(d\varphi(e_i),d\varphi(e_j))\tau(\varphi),\tau(\varphi),d\varphi(e_j)\big )}_{{=\xi_1}} \\
&+R^N\big ((\nabla_{d\varphi(e_i)}R^N)(d\varphi(e_i),d\varphi(e_j),\tau(\varphi)),\tau(\varphi)\big )d\varphi(e_j) \\
&+R^N\big(R^N(\tau(\varphi),d\varphi(e_j))\tau(\varphi),\tau(\varphi)\big )d\varphi(e_j) \\
&+R^N\big (R^N(d\varphi(e_i),{ \nabla}d\varphi(e_i,e_j))\tau(\varphi),\tau(\varphi)\big )d\varphi(e_j)\\
&+R^N\big (R^N(d\varphi(e_i),d\varphi(e_j))\bar\nabla_{e_i}\tau(\varphi),\tau(\varphi)\big )d\varphi(e_j)\\
&+R^N\big (R^N(d\varphi(e_i),d\varphi(e_j))\tau(\varphi),\bar\nabla_{e_i}\tau(\varphi)\big )d\varphi(e_j).
\end{align*}

Hence, again this output has a tensorial meaning and so it holds on $M$. In terms of local coordinates we have
\begin{align*}
2{\xi_1^\alpha}+2(d^\ast\Omega_1)^\alpha=&
-2\Big[R^\alpha_{\beta\gamma\delta}R^\gamma_{\mu\nu\vartheta;\epsilon}u_0^\mu u_0^\delta\langle d\varphi^\epsilon,d\varphi^\nu\rangle\langle d\varphi^\beta,d\varphi^\vartheta\rangle \\
&+R^\alpha_{\beta\gamma\delta}R^\gamma_{\mu\nu\vartheta}u_0^\mu u_0^\nu u_0^\delta\langle d\varphi^\beta,d\varphi^\vartheta\rangle 
+R^\alpha_{\beta\gamma\delta}R^\gamma_{\mu\nu\vartheta}u_0^\mu u_0^\delta \varphi^{\nu}_i \varphi^{\beta}_j g^{ik}g^{j\ell}(\nabla_k \varphi_\ell^\vartheta)\\
&+R^\alpha_{\beta\gamma\delta}R^\gamma_{\mu\nu\vartheta}\langle d\varphi^\nu,(\bar\nabla\tau(\varphi))^\mu\rangle
\langle d\varphi^\vartheta,d\varphi^\beta\rangle u_0^\delta \\
&+R^\alpha_{\beta\gamma\delta}R^\gamma_{\mu\nu\vartheta}\langle d\varphi^\nu,(\bar\nabla\tau(\varphi))^\delta\rangle\langle d\varphi^\vartheta,d\varphi^\beta\rangle u_0^\mu\Big]
\end{align*}
and so we deduce that the first two terms of the right hand side of \eqref{es-4-hat} can be rewritten in terms of the desired variables.
Here, we use a ``\;\(;\)\;'' to denote the covariant derivative of the curvature tensor.

Unfortunately, the third term on the right hand side of \eqref{es-4-hat} causes more technical difficulties. First we state the following lemma whose proof is standard and thus omitted.

\begin{Lem}\label{lem:T13-derivative}
Let $\varphi:M\to N$, $T\in C(T_3^1(N))$, and $\sigma_1,\sigma_2,\sigma_3,\sigma_4\in C(\varphi^{-1}TN)$. Define
$(\nabla_{\sigma_1}T)(\sigma_2,\sigma_3,\sigma_4)\in C(\varphi^{-1}TN)$ by
\[
\left((\nabla_{\sigma_1}T)(\sigma_2,\sigma_3,\sigma_4)\right)(p)=\left(\nabla_{\sigma_1(p)}T\right)(\sigma_2(p),\sigma_3(p),\sigma_4(p))\,, \quad \forall p\in M\,.
\]
Then, for $X\in C(TM)$, we have
\[
\begin{split}
\bar\nabla_{X}&\left((\nabla_{\sigma_1}T)(\sigma_2,\sigma_3,\sigma_4)\right)=
\left(\nabla_{\bar\nabla_{X}\sigma_1}T\right)(\sigma_2,\sigma_3,\sigma_4)+\left(\nabla^2 T\right)(d\varphi(X),\sigma_1,\sigma_2,\sigma_3,\sigma_4)\\
&+(\nabla_{\sigma_1}T)(\bar\nabla_{X}\sigma_2,\sigma_3,\sigma_4)
+(\nabla_{\sigma_1}T)(\sigma_2,\bar\nabla_{X}\sigma_3,\sigma_4)
+(\nabla_{\sigma_1}T)(\sigma_2,\sigma_3,\bar\nabla_{X}\sigma_4)\,.
\end{split}
\]
\end{Lem}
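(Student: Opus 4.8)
The plan is to derive Lemma \ref{lem:T13-derivative} from the following general Leibniz rule for the pull-back connection: for any tensor $S\in C(T_q^1(N))$ and any sections $\sigma_1,\ldots,\sigma_q\in C(\varphi^{-1}TN)$, and for $X\in C(TM)$, one has
\[
\bar\nabla_X\big(S(\sigma_1,\ldots,\sigma_q)\big)=(\nabla_{d\varphi(X)}S)(\sigma_1,\ldots,\sigma_q)+\sum_{a=1}^{q}S(\sigma_1,\ldots,\bar\nabla_X\sigma_a,\ldots,\sigma_q)\,.
\]
Granting this, the lemma follows by taking $q=4$ and $S=\nabla T$, and using the definitions $(\nabla T)(\sigma_1,\sigma_2,\sigma_3,\sigma_4)=(\nabla_{\sigma_1}T)(\sigma_2,\sigma_3,\sigma_4)$ and $(\nabla_{d\varphi(X)}\nabla T)(\sigma_1,\sigma_2,\sigma_3,\sigma_4)=(\nabla^2 T)(d\varphi(X),\sigma_1,\sigma_2,\sigma_3,\sigma_4)$: the $a=1$ term becomes $(\nabla_{\bar\nabla_X\sigma_1}T)(\sigma_2,\sigma_3,\sigma_4)$ and the terms $a=2,3,4$ reproduce the last three terms on the right-hand side of the asserted identity.

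To prove the displayed rule I would argue pointwise, since both sides are $C^\infty(M)$-linear in $X$ and local in the base point. Fix $p\in M$, choose a chart $(V,y^\alpha)$ around $\varphi(p)$, and on $\varphi^{-1}(V)$ write $\sigma_a=\sigma_a^{\beta_a}\,(\partial_{y^{\beta_a}}\circ\varphi)$ with smooth coefficients $\sigma_a^{\beta_a}$, so that $S(\sigma_1,\ldots,\sigma_q)=\sigma_1^{\beta_1}\cdots\sigma_q^{\beta_q}\,\big(Z_{\beta_1\cdots\beta_q}\circ\varphi\big)$, where $Z_{\beta_1\cdots\beta_q}:=S(\partial_{y^{\beta_1}},\ldots,\partial_{y^{\beta_q}})$ is a genuine vector field on $V$. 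Applying the Leibniz rule for $\bar\nabla$, together with the defining compatibility $\bar\nabla_X(Z\circ\varphi)=(\nabla_{d\varphi(X)}Z)\circ\varphi$, produces on the one hand the terms in which $X$ differentiates one of the coefficients $\sigma_a^{\beta_a}$, and on the other hand the single term $\sigma_1^{\beta_1}\cdots\sigma_q^{\beta_q}\,\big((\nabla_{d\varphi(X)}Z_{\beta_1\cdots\beta_q})\circ\varphi\big)$. Expanding this last term by the Leibniz rule for $\nabla$ on $N$ splits it into one term carrying $\nabla_{d\varphi(X)}S$ and $q$ further terms containing $\nabla_{d\varphi(X)}\partial_{y^{\beta_a}}$, i.e. Christoffel-symbol contributions.

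The conclusion then follows by reorganization: for each $a$, the Christoffel-symbol term combines with the corresponding $X(\sigma_a^{\beta_a})$ term to re-assemble the components of $\bar\nabla_X\sigma_a$, while the coefficient factors on the remaining slots re-assemble $\sigma_b$ for $b\neq a$; the surviving term re-assembles $(\nabla_{d\varphi(X)}S)(\sigma_1,\ldots,\sigma_q)$. I expect no genuine obstacle: the only mildly delicate point is the index bookkeeping needed to see that one recovers the full covariant derivatives $\bar\nabla_X\sigma_a$ rather than mere partial derivatives, and to appreciate that the detour through a coordinate frame on $N$ is exactly what makes the computation legitimate, since the $\sigma_a$ are sections of $\varphi^{-1}TN$ and cannot themselves be extended to vector fields on $N$. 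This is precisely the routine verification that the statement already advertises as standard.
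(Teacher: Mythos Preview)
Your proof is correct. The paper does not actually supply a proof of this lemma: it simply declares the proof ``standard and thus omitted.'' Your argument---reducing to the general Leibniz rule for the pull-back connection applied to $S=\nabla T\in C(T_4^1(N))$, and verifying that rule by a local-coordinate computation using the defining compatibility $\bar\nabla_X(Z\circ\varphi)=(\nabla_{d\varphi(X)}Z)\circ\varphi$---is exactly the standard verification the authors have in mind, and the bookkeeping you describe (recombining the Christoffel terms with the $X(\sigma_a^{\beta_a})$ terms to rebuild each $\bar\nabla_X\sigma_a$) is the only point requiring any care.
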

Now, we assume that for a given arbitrary point \(p\in M\), $\{e_i\}$ is a geodesic frame field around \(p\) and we perform the calculations at the point \(p\). We have
\begin{equation}
\label{Delta-omega0}
\begin{split}
\bar\Delta\Omega_0=&-\bar\nabla_{e_k}\Big [(\nabla_{d\varphi(e_k)}R^N)(d\varphi(e_i),d\varphi(e_j),R^N(d\varphi(e_i),d\varphi(e_j))\tau(\varphi)) \\
&+2R^N(\bar\nabla_{e_k}d\varphi(e_i),d\varphi(e_j))(R^N(d\varphi(e_i),d\varphi(e_j)\tau(\varphi))\\
&+R^N(d\varphi(e_i),d\varphi(e_j))\big ((\nabla_{d\varphi(e_k)}R^N)(d\varphi(e_i),d\varphi(e_j),\tau(\varphi))\big ) \\
&+2R^N(d\varphi(e_i),d\varphi(e_j))\big ( R^N(\bar\nabla_{e_k}d\varphi(e_i),d\varphi(e_j))\tau(\varphi)\big )\\
&+R^N(d\varphi(e_i),d\varphi(e_j))\big ( R^N(d\varphi(e_i),d\varphi(e_j))\bar\nabla_{e_k}\tau(\varphi)\big )
\Big ]\,.
\end{split}
\end{equation}

In order to express the terms in  \eqref{Delta-omega0} with respect to suitable variables we begin writing down the first addend. Using Lemma~\ref{lem:T13-derivative} we have
\begin{equation}
\label{Delta-omega0-first-term}
\begin{split}
\bar\nabla_{e_k}&\big[\big( \nabla_{d\varphi(e_k)} R^N\big)\big(d\varphi(e_i),d\varphi(e_j),R^N(d\varphi(e_i),d\varphi(e_j))\tau(\varphi)\big) \big]=\\
&\big( \nabla_{\tau(\varphi)} R^N\big)\big(d\varphi(e_i),d\varphi(e_j),R^N(d\varphi(e_i),d\varphi(e_j))\tau(\varphi)\big)\\
+&\big(\nabla^2 R^N\big)\big(d\varphi(e_k),d\varphi(e_k),d\varphi(e_i),d\varphi(e_j),R^N(d\varphi(e_i),d\varphi(e_j))\tau(\varphi)\big)\\
+&2\big(\nabla_{d\varphi(e_k)}R^N\big)\big(\bar\nabla_{e_k}d\varphi(e_i),d\varphi(e_j),R^N(d\varphi(e_i),d\varphi(e_j))\tau(\varphi)\big)\\
+&
\big(\nabla_{d\varphi(e_k)}R^N\big)\big(d\varphi(e_i),d\varphi(e_j),\big(\nabla_{d\varphi(e_k)}R^N\big)\big(d\varphi(e_i),d\varphi(e_j),\tau(\varphi) \big)\big)\\
+&2 \big(\nabla_{d\varphi(e_k)}R^N\big)\big(d\varphi(e_i),d\varphi(e_j),R^N(\bar\nabla_{e_k}d\varphi(e_i),d\varphi(e_j))\tau(\varphi)\big)\\
+&\big(\nabla_{d\varphi(e_k)}R^N\big)\big(d\varphi(e_i),d\varphi(e_j),R^N(d\varphi(e_i),d\varphi(e_j))\bar\nabla_{e_k}\tau(\varphi)\big)\,.
\end{split}
\end{equation}
Since, at $p$, $\bar\nabla_{e_k}d\varphi(e_i)=\nabla d\varphi(e_k,e_i)$ we can conclude that all terms in \eqref{Delta-omega0-first-term} have a tensorial character.
As a consequence we can replace the geodesic frame field $\{e_i\}$ by the local coordinates frame field $\{\partial/\partial x^i\}$ and, since all terms are linear in $\tau(\varphi)$ or $\nabla_{({\partial}/{\partial x^i})}\tau(\varphi)$, they can be estimated by $u_0^{\alpha}$ and 
$\partial u_0^{\alpha}/\partial x^i$. To obtain the correct estimates of the other addends in 
\eqref{Delta-omega0} it is enough to show that the terms
\[
{\rm (i)}\;\; \bar \nabla_{e_k}\bar \nabla_{e_k}d\varphi (e_i)\,,\quad {\rm (ii)}\;\; \bar\nabla_{e_k}\bar\nabla_{e_k}\tau(\varphi)
\]
have a tensorial character. For (i), applying the Weitzenb\"ock  formula (see, for example, \cite{MR703510} or \cite[Proposition 1.34]{MR1391729}), we obtain
\[
\bar\nabla_{e_k}\bar \nabla_{e_k}d\varphi (e_i)=R^N(d\varphi (e_k),d\varphi (e_i))d\varphi(e_k)+d\varphi(\operatorname{Ric}^M(e_i))+\bar\nabla_{e_i}\tau(\varphi)
\]
which shows that $\bar\nabla_{e_k}\bar \nabla_{e_k}d\varphi (e_i)$ has indeed a tensorial character. While, for (ii), we have 
\[
\bar\nabla_{e_k}\bar\nabla_{e_k}\tau(\varphi)=-\bar\Delta\tau(\varphi)\,.
\]
At the end all addends in \eqref{Delta-omega0} have a tensorial character and 
replacing the geodesic frame field $\{e_i\}$ by the local coordinates frame field $\{\partial/\partial x^i\}$ they can be estimated by $u_0^{\alpha}$,  
$\partial u_0^{\alpha}/\partial x^i$ and $u_1^{\alpha}$.

At this point we have realized that all terms on the right hand side of \eqref{es-4-hat} 
can be expressed in terms of the required variables.

\begin{proof}[Proof of Theorem~\ref{theorem-es4-harmonic} and Theorem~\ref{theorem-es4-main}] In order to prove Theorem~\ref{theorem-es4-harmonic} we define the 
vector-valued function
\[
u=\left ( \begin{array}{l}
u_0\\
v_0\\
u_1\\
v_1\\
u_2
\end{array} \right )
\]
and by the same analysis as in the proof of Theorem \ref{Th-harmonic-everywhere} we show using Aronszajn's Theorem that \(u=0\).

In order to prove Theorem \ref{theorem-es4-main} we define the vector-valued function
\[
u=\left ( \begin{array}{l}
\varphi\\
d\varphi\\
u_0\\
v_0\\
u_1\\
v_1\\
u_2
\end{array} \right )
\]
and \(\tilde u\) will be defined accordingly. Employing the same strategy as in the proof
of Theorem \ref{main-theorem}, now it is easy to complete the proof of Theorem \ref{theorem-es4-main}.
\end{proof}

\par\medskip
\textbf{Acknowledgements:}
The first author gratefully acknowledges the support of the Austrian Science Fund (FWF) 
through the project P30749-N35 ``Geometric variational problems from string theory''. The second and the last author  were supported by Fondazione di Sardegna (project STAGE) and Regione Autonoma della Sardegna (Project KASBA).

\end{document}